\newtheorem*{theorem*}{Theorem}
\newtheorem*{claim*}{Claim}
\newtheorem{main_theorem}{Theorem}
\newtheorem{theorem}{Theorem}[section]
\newtheorem{main_corollary}[main_theorem]{Corollary}
\newtheorem{lemma}[theorem]{Lemma}
\newtheorem{corollary}[theorem]{Corollary}
\newtheorem{remark}[theorem]{Remark}
\newtheorem{proposition}[theorem]{Proposition}
\def\del{\partial}
\def\dbar{\bar\partial}
\def\del{\partial}
\def\o{\omega}
\title{The Mabuchi Geometry of Finite Energy Classes}
\author{Tam\'as Darvas\thanks{Research supported by NSF grant DMS1162070 and BSF grant 2012236. \newline  2010 Mathematics subject classification 53C55, 32W20, 32U05. }}
\date{\vspace{-0.3in}}
\begin{document}
\maketitle
\begin{abstract}
We introduce different Finsler metrics on the space of smooth K\"ahler potentials that will induce a natural geometry on various finite energy classes $\mathcal E_{\tilde \chi}(X,\o)$. Motivated by questions raised by R. Berman, V. Guedj and Y. Rubinstein, we characterize the underlying topology of these spaces in terms of  convergence in energy and give applications of our results to existence of K\"ahler-Einstein metrics on Fano manifolds.
\end{abstract}

\section{Introduction and Main Results}

Suppose $(X,\o)$ is a compact connected K\"ahler manifold and $\mathcal H$ is the space of K\"ahler potentials associated to $\o$.
We have three major goals in this paper. First, in hopes of unifying the treatment of different geometries/topologies on $\mathcal H$ already present in the current literature, we introduce different Finsler structures on this space. As motivation we single out two well known examples. The path length metric $d_2$ associated to the $L^2$ Riemannian structure induces the much studied Mabuchi geometry of $\mathcal H$ \cite{m}. As it turns out, by introducing the analogous $L^1$ metric and the associated path length metric $d_1$, one recovers the extremely useful strong topology introduced in \cite{bbegz}, whose study lead to many important applications in the study of weak solutions to complex Monge--Amp\'ere equations (see \cite{bbegz,bbgz,egz}). In this part we point that one can treat these different structures in a unified manner, by working with certain very general Orlicz--Finsler type metrics on $\mathcal H$.

Second, with applications in mind, one would like to characterize the metric geometry of these Orlicz--Finsler structures using very concrete terms. Until now, the lack of tools to give a satisfactory answer to this problem presented a formidable roadblock in drawing geometric conclusions using the $L^2$ geometry of $\mathcal H$. With this in mind V. Guedj \cite[Section 4.3]{g} conjectured the following pluripotential theoretic characterization:
\begin{flalign*}
\frac{1}{C}\Big({\int_X (u_1 - u_0)^2 \o_{u_0}^n} + &{\int_X (u_1 - u_0)^2 \o_{u_1}^n}\Big) \leq d_2(u_0,u_1)^2 \\ & \leq
C \Big({\int_X (u_1 - u_0)^2 \o_{u_0}^n} + {\int_X (u_1 - u_0)^2 \o_{u_1}^n}\Big), \ u_0,u_1 \in \mathcal H,
\end{flalign*}
for some $C>1$.
We prove this result in the setting of the more general Orlicz-Finsler structures mentioned above. As a consequence we note that convergence in these very general spaces implies convergence in capacity of the potentials in the sense of \cite{k} and that $\sup_X u$ is always controlled by $d_2(0,u)$ for any $u \in \mathcal H$.

Third, we give some immediate applications of the tools developed here. Parallelling the analogous results for the Calabi metric in \cite{cr} and answering questions posed in this same paper, we connect the existence of Kahler--Einstein metrics on Fano manifolds with stability of the K\"ahler--Ricci flow and properness of Ding's $\mathcal F$--functional measured in these general geometries. We  defer other applications to a future correspondence.

\subsection{Finsler Metrics on the Space of K\"ahler Potentials}
Given $(X^n,\o)$, a connected compact K\"ahler manifold, the space of smooth K\"ahler potentials $\mathcal H$ is the set
$$\mathcal H = \{ u \in C^\infty(X) | \ \o_u:=\o + i\partial \bar \partial u > 0\}.$$
Clearly, $\mathcal H$ is a Fr\'{e}chet manifold as an open subset of $C^{\infty}(X)$. For $v \in \mathcal{H}$ one can identify $T_v \mathcal{H}$ with $C^{\infty}(X)$. Given a normalized Young weight $\chi:\Bbb R \to \Bbb R^+ \cup\{+\infty\}$ (convex, even, lower semi--continuous  and satisfying the normalizing conditions $\chi(0)=0$, $1 \in \partial \chi(1)$) one can introduce a  Finsler metric on $\mathcal H$ associated to the Orlicz norm of $\chi$ (see Section 2.1):
\begin{equation}\label{FinslerDef}
\|\xi\|_{\chi,u}=\inf \Big\{ r > 0 :  \frac{1}{\textup{Vol}(X)}\int_X \chi\Big(\frac{\xi}{r}\Big) \o_u^n \leq \chi(1) \Big\}, \ \ \ \ \xi \in T_v \mathcal{H}.
\end{equation}
If $\chi(l)=\chi_p(l)=l^p/p$, then $\|\xi\|_{\chi,u}$ is just the $L^p$ norm of $\xi$ with respect to the volume  $\o_u^n$, hence \eqref{FinslerDef} generalizes the much studied Mabuchi Riemannian metric, initially investigated in \cite{m,s,do}, which corresponds to the case $p=2$. As we shall see later, the path length metric associated to the case $p=1$ is tied up with the strong topology of $\mathcal H$ introduced in \cite[Section 2.1]{bbegz}.

Unfortunately, the weights $\chi_p(l)=l^p/p$ are not twice differentiable for $1 \leq p < 2$. This will cause problems as we shall see, and an approximation with smooth weights is necessary to carry out even the most basic geometric arguments. For this reason, one needs to work with a with a fairly big class of Young weights,  even if one only wants to study the $L^p$ geometry of $\mathcal H$. The classes $\mathcal W^+_p, \ p \geq 1$ are sufficient for this purpose. These weights are finite and  satisfy the growth condition:
$$l\chi'(l) \leq p \chi(l), \ l > 0.$$

Some of our results below hold for even more general weights, however the proofs would be much more complicated, and it is unlikely that greater generality will find applications in K\"ahler geometry.

As usual, the length  of a smooth curve $[0,1]\ni t \to \alpha_t \in \mathcal{H}$ is computed by the formula:
\begin{equation}\label{curve_length_def}
l_\chi(\alpha)=\int_0^1\|\dot \alpha_t\|_{\chi,\alpha_t}dt.
\end{equation}
The distance $d_\chi({u_0},{u_1})$ between ${u_0},{u_1} \in \mathcal H$ is the infimum of the length of smooth curves joining ${u_0}$ and ${u_1}$. In case of the $L^2$ metric X.X. Chen proved that $d_2(u_0,u_1)=0$ if and only if $u_0 = u_1$, thus $(\mathcal H, d_2)$ is a metric space \cite{c}. Unfortunately, smooth geodesics don't run between the points of $\mathcal H$ \cite{lv}, however Chen proved (with complements by B\l ocki \cite{bl2}) that for $u_0,u_1 \in \mathcal H$ there exists a curve
\begin{equation}\label{Intrweakgeod}
[0,1] \ni t \to u_t \in \mathcal H_\Delta = \textup{PSH}(X,\o) \cap \{ \Delta u \in L^\infty(X)\}
\end{equation}
that satisfies the geodesic equation in a certain weak sense, and it also holds that
$$d_2(u_0,u_1)=\|\dot u_t \|_{2,u_t}, \ t \in [0,1].$$

With our first theorem we extend these results to the more general Finsler geometric setting:

\begin{main_theorem}[Theorem \ref{ChiChenTheorem}] \label{XXChenThm} If $\chi \in \mathcal W^+_p, \ p \geq 1$ then $(\mathcal H,d_\chi)$ is a metric space and for any $u_0,u_1 \in \mathcal H$ the weak geodesic from \eqref{Intrweakgeod} satisfies
$$d_\chi(u_0,u_1)=\|\dot u_t \|_{\chi,u_t}, \ t \in [0,1].$$
\end{main_theorem}
The proof of this result follows Chen's original arguments and involves a careful differential analysis of Orlicz norms. Although the metric spaces $(\mathcal H, d_\chi)$ are not equivalent (they have different metric completion, see below), it is remarkable that the weak geodesic of \eqref{Intrweakgeod} is able to see all the different metrics $d_\chi$. Topping this, as we will see shortly, this same curve is an honest geodesic in the metric completion each space $(\mathcal H, d_\chi)$.

Building on \cite{d}, which considers the $L^2$ metric, we want to describe the metric completion of $(\mathcal H, d_\chi), \chi \in \mathcal W^+_p$. Before this, we need to recall a few facts about finite energy classes. Given $u \in \textup{PSH}(X,\o)$, as explained in  \cite{gz}, one can define the non-pluripolar measure $\o_u^n$ that coincides with the usual Bedford-Taylor volume when $u$ is bounded. We say that $\o_u^n$ has full volume ($u \in \mathcal E(X,\omega)$) if $\int_X \o_u^n = \int_X \o^n$. To be compatible with the notation in \cite{gz}, to all $\chi \in \mathcal W^+_p$ we attach another weight $\tilde \chi$, defined as follows:
\begin{equation}
\tilde \chi(l) = \begin{cases} -\chi(l) &\mbox{if } l \leq 0 \\
0 & \mbox{if } l >0. \end{cases}
\end{equation}
Given  $v \in \mathcal E(X,\omega)$, we say that $v \in \mathcal E_{\tilde \chi}(X,\omega)$ if
$$E_{\tilde \chi}(v)=\int_X \tilde \chi(v) \o_v^n > -\infty,$$
Following \cite{gz}, one can introduce finite energy classes for more general weights. We will focus only on the special classes introduced above which already received attention in \cite[Section 3]{gz}, where they are referred to as classes of high energy. For a quick review of all the results needed about finite energy classes in our study we refer to \cite[Section 2.3]{d}.

Next we introduce a geodesic metric space structure on $\mathcal E_{\tilde \chi}(X,\o)$. Suppose $u_0,u_1 \in \mathcal E_{\tilde\chi}(X,\omega)$.  Let $\{u^k_0\}_{k \in \Bbb N},\{u^k_1 \}_{k \in \Bbb N}\subset \mathcal H$ be sequences decreasing pointwise to $u_0$ and $u_1$ respectively. By \cite{bk} it is always possible to find such approximating sequences. We define the metric $d_\chi(u_0,u_1)$ as follows:
\begin{equation}\label{EchiDistDef}
d_\chi(u_0,u_1) = \lim_{k \to \infty} d_\chi(u^k_0,u^k_1).
\end{equation}
Clearly, one has to argue that this limit exists and is independent of the approximating sequences, and we do this in Section 4 below. Let us also define geodesics in this space. Let $u^k_t : [0,1] \to \mathcal H_{\Delta} := \textup{PSH}(X,\omega) \cap \{ \Delta u \in L^\infty\}$ be the weak geodesic joining $u^k_0,u^k_1$, whose existence was proved by Chen (see Section 2.2). We define $t\to u_t$ as the decreasing limit:
\begin{equation}\label{EchiGeodDef}
u_t = \lim_{k \to + \infty}u^k_t, \ t \in (0,1).
\end{equation}
The curve $t \to u_t$ is well defined and we additionally have $u_t \in \mathcal E_{\tilde\chi}(X,\omega), \ t \in (0,1)$, as follows from  \cite[Theorem 6]{d}.

\begin{main_theorem}[Theorem \ref{EChiComplete}] \label{MainEchiComplete} If $\chi \in \mathcal W^+_p, \ p \geq 1$ then $(\mathcal E_{\tilde \chi}(X, \o),d_\chi)$ is a geodesic metric space, which is the metric completion of $(\mathcal H,d_\chi)$. Additionally, the curve defined in \eqref{EchiGeodDef} is a geodesic connecting $u_0,u_1$.
\end{main_theorem}

Although not specifically pursued there, from the ideas developed in \cite[Section 4]{g}, the toric version of this theorem easily follows. Using the Riemannian metric of Calabi \cite{cal, clm}, the completion of the space of K\"ahler metrics was first studied in \cite{cr}, and in Section 7.2 of this paper the authors proposed to study the completion with respect to other Riemannian metrics. The above theorem fits into this framework and in a future publication we will compare the Calabi and Mabuchi geometries.

The proof of Theorem \ref{MainEchiComplete} is given by adapting to our general setting the arguments of \cite[Theorem 1]{d}, which deals with the particular case of the $L^2$ metric. As a result of the condition $\chi \in \mathcal W^+_p$, few things need to be altered, and we will indicate only the necessary changes. In \cite{d} it is additionally proved that $(\mathcal E^2(X,\omega),d_2)$ is a CAT(0) space, hence geodesics connecting different points in this space are unique. As we shall see, none of these properties hold for general $\chi$ (see discussion after Theorem \ref{EChiComplete}).

\subsection{Characterization of the Path Length Metric Topology}

One would like to characterize the topology of $(\mathcal E_{\tilde \chi}(X,\omega),d_\chi)$ in very concrete terms. With this goal in mind, given $u_0,u_1 \in \mathcal E_{ \tilde \chi}(X,\o)$ we introduce the following energy functional:
$$I_\chi(u_0,u_1) = \|u_1 - u_0\|_{\chi,u_0} + \|u_1 - u_0\|_{\chi,u_1}.$$
This functional was first proposed in \cite{g} in the particular case of the $L^2$ metric. We say that $\{ u_j\}_{j \in \Bbb N} \subset \mathcal E_{\tilde\chi}(X,\o)$ converges to $u \in \mathcal E_{\tilde\chi}(X,\o)$ in $\chi$--energy if $\lim_j I_\chi(u_j,u)=0$. When $\chi(l)=\chi_1(l)=|l|$, this is equivalent to the notion of strong convergence introduced in \cite[Section 2.1]{bbegz} (see Proposition \ref{strongchiconveqv}). The next theorem says that convergence in $\chi$--energy and $d_\chi$--convergence are the same:
\begin{main_theorem}[Theorem \ref{Energy_Metric_Eqv_thm}] \label{Energy_Metric_Eqv}
Given $\chi \in \mathcal W^+_p, \ p \geq 1$, there exists $C(p) >1$ such that
\begin{equation}\label{mabenergeqv}\frac{1}{C}I_\chi(u_0-u_1) \leq d_{\chi}(u_0,u_1) \leq C I_\chi(u_0 - u_1), \ u_0,u_1 \in \mathcal E_{\tilde \chi}(X,\o).
\end{equation}
\end{main_theorem}
We note the following immediate consequence:
\begin{main_corollary} \label{supcorollary}Given $\chi \in \mathcal W^+_p$ and $u \in \mathcal E_{\tilde \chi}(X,\o)$ there exists $C(p) >1$ such that
$$\sup_X u \leq C d_\chi(0,u) +C.$$
\end{main_corollary}

\begin{proof} As all Orlicz norms dominate the $L^1$ norm \cite{rr}, the $d_1$ metric is dominated by all $d_\chi$ metrics and also $\mathcal E_{\tilde \chi}(X,\o) \subset \mathcal E^1(X,\o)$. Hence, it is enough to prove the result for $d_1$. It is known that for all $u \in \textup{PSH}(X,\o)$ there exists $C' >0$ such that $\sup_X u - C' \leq \int_X u \o^n.$
On the other hand we have $\int_X u \o^n \leq I_1(0,u) \leq C d_1(0,u)$ for $u \in \mathcal E^1(X,\o)$ by the above theorem.
\end{proof}

From the last theorem and the techniques developed in \cite{bbgz} and  \cite{bbegz} it follows that $d_\chi$--convergence implies more classical notions of convergence:
\begin{main_theorem}[Corollary \ref{MabConvCor}, Theorem \ref{FlatImmersion}] \label{MabConvergence} Suppose $\chi \in \mathcal W^+_p$ and $\ u_k,u \in \mathcal E_{\tilde \chi}(X,\o)$ with $d_\chi(u_k,u) \to 0$. Then the following holds:
\begin{itemize}
\item[(i)] $u_k \to u$ in capacity in the sense of \cite{k}. In particular, $\o_{u_k}^n \to \o_{u}^n$ weakly.
\item[(ii)] For any $v \in \mathcal E_{\tilde \chi}(X,\o)$ we have $\|u_k - u\|_{\chi,v} \to 0.$
\end{itemize}
\end{main_theorem}

Fixing $v \in \mathcal E_{\tilde \chi}(X,\o)$, by \cite[Proposition 3.6]{gz} it follows that $(\mathcal E_{\tilde \chi}(X,\o),d_\chi) \subset L^\chi(\o_v^n)$. The content of Theorem \ref{MabConvergence}(ii) is that this inclusion is continuous as a map between the following metric spaces:
$$(\mathcal E_{\tilde \chi}(X,\o),d_\chi) \to (L^\chi(\o_v^n),\| \cdot \|_{\chi,v}).$$

In the particular case of the $L^2$ metric, the last three results answer questions raised by R. Berman \cite{bpers} and V. Guedj \cite[Section 4.3]{g}.

\subsection{The $d_\chi$--convergence of the K\"ahler-Ricci trajectories}

Lastly, we give some immediate applications of our results to the stability of the K\"ahler--Ricci flow and K\"ahler--Einstein metrics. Let us assume that $(X,J,\o)$ is a Fano manifold. We introduce the following "totally geodesic" hypersurfaces of $\mathcal H$ and $\mathcal E^1(X,\o)$ (see \cite[Proposition 6.2]{bbgz}):
$$\mathcal H_{AM} = \mathcal H \cap \{ AM(\cdot)=0\},$$
$$\mathcal E^1_{AM}(X,\o) = \mathcal E^1(X,\o) \cap \{ AM(\cdot)=0\},$$
where $AM : \mathcal E^1(X,\o) \to \Bbb R$ is the Aubin-Mabuchi energy functional defined by the formula:
$$AM(v)=\frac{1}{(n+1)\textup{Vol}(X)}\sum_{j=0}^n\int_{X} v\o^j\wedge (\o + i\partial\bar\partial v)^{n-j}.$$
As we will see (Lemma \ref{AMcont}), $AM(\cdot)$ restricted to $\mathcal E_{\chi}(X,\o)$ is $d_\chi$--continuous for any $\chi \in \mathcal W^+_p$. Additionally, this functional is also linear along geodesics.

A smooth metric $\omega_{u_{KE}}$ is K\"ahler-Einstein if $\omega_{u_{KE}}=\textup{Ric }\omega_{u_{KE}}$. One can study such metric by looking at the long time asymptotics of Hamilton's K\"ahler--Ricci flow:
\begin{equation}\label{RicciflowEq}
\begin{cases}
\frac{d \o_{r_t}}{dt} = - \textup{Ric }\o_{r_t} + \o_{r_t}, \\
r_0 = v. \end{cases}
\end{equation}
As proved by Cao \cite{ca}, for any $v \in \mathcal H_{AM}$, this PDE has a smooth solution $[0,1) \ni t \to r_t \in \mathcal H_{AM}$. It follows from a theorem of Perelman and work of Chen-Tian, Tian-Zhu and Phong-Song-Sturm-Weinkove,
that whenever a K\"ahler--Einstein metric cohomologous to $\o$ exists, then $\o_{r_t}$ converges exponentially fast to one such metric (see \cite{ct,tz,pssw}).

We remark that our choice of normalization is different from the alternatives used in the literature (see \cite[Chapter 6]{beg}). We choose to work with the normalization $AM(\cdot)=0$, as this seems to be the most natural one from the point of view of Mabuchi geometry. As we shall see in Section 6, from the point of view of long time asymptotics, this choice is equivalent to all other alternatives.

Fix $\chi \in \mathcal W^+_p$. Motivated by the above, we would like to study the stability of K\"ahler--Ricci trajectories from the point of view of the $d_\chi$--geometry. In analogy with \cite[Definition 6.2]{cr}, we say that a Fano manifold $(X,J,\o)$ is $\chi$--unstable if there exists a starting point $v \in \mathcal H_{AM}$ and a corresponding K\"ahler--Ricci trajectory $[0,\infty) \ni t \to r_t \in \mathcal H_{AM}$ that diverges with respect to $d_\chi$. Otherwise, we say that $(X,J, \o)$ is $\chi$--stable.

Digressing slightly, we recall the definition of Ding's $\mathcal F$--functional and the $\mathcal J$--functional, which are as follows: $\mathcal F,\mathcal J : \mathcal E^1_{AM}(X,\o) \to \Bbb R$,
\begin{equation}\label{DIngDef}
\mathcal F(u) = - AM(u) - \log \int_X e^{-u + h}\o^n =  - \log \int_X e^{-u+h}\o^n,
\end{equation}
\begin{equation}\label{JDef}
\mathcal J(u) = \frac{1}{\textup{Vol}(X)}\int_X u \o^n - AM(u)=\frac{1}{\textup{Vol}(X)}\int_X u \o^n.
\end{equation}
Here $h \in C^\infty(X)$ is the Ricci potential of $\o$, i.e. $\textup{Ric }\o =\o+i\partial\bar\partial h$. By Theorem \ref{MabConvergence} above, both of these functionals are continuous with respect to $d_\chi$. By a result of  Tian \cite[Theorem 1.6]{t}, if $(X,J)$ admits no non-trivial holomorphic vector--fields, then $(X,J,\o)$ admits a K\"ahler-Einstein metric if and only if $\mathcal F$ is $\mathcal J$--proper, meaning that $\mathcal J(u_k) \to \infty$ implies $\mathcal F(u_k) \to \infty$. One would like to understand this condition from a more geometric point of view. Connecting all of the above, we give the following result:
\begin{main_theorem}[Theorem \ref{MR-stab}, Theorem \ref{Dingstab}] \label{ApplKE}Suppose $(X,J,\o)$ is a Fano manifold with no non--trivial holomorphic vector fields. The following are equivalent:
\begin{enumerate}
\item[(i)] There exists a K\"ahler--Einstein metric cohomologous to $\o$.
\item[(ii)] $(X,J, \o)$ is $\chi$--stable for any $\chi \in \mathcal W^+_p$.
\item[(iii)] $\mathcal F$ is $d_1$--proper on $\mathcal E^1_{AM}(X,\o)$, i.e. sublevel sets of $\mathcal F$ are $d_1$--bounded.
\end{enumerate}
\end{main_theorem}

With the help of Theorem \ref{MabConvergence}, the equivalence between (i) and (ii) can be given along the same lines as the analogous result for the Calabi metric \cite[Theorem 6.3]{cr}. As we shall see, the assumption that $(X,J)$ has no non--trivial holomorphic vector fields is not needed for this equivalence. In addition to this, whenever a K\"ahler--Einstein metric exists, not only is $(X,J, \o)$ is $\chi$--stable, but each Ricci trajectory $t \to r_t$ converges to one such metric exponentially fast. This strengthens results of McFeron \cite{mc} and answers questions raised in \cite[Section 4.3.4]{r}. We refer to this last expository work for connections with other conjectures in K\"ahler geometry.

The $d_1$--metric always dominates the Ding functional, i.e. there exists $A,B >0$ such that $\mathcal F(u) \leq A d_1(0,u) + B$ (Remark \ref{JensenRemark}). Because of this and the fact the each $d_\chi$ dominates $d_1$, it is not possible to replace $d_1$--properness in (iii) with the more general $d_\chi$--properness, as $d_1$ and $d_\chi$ are not equivalent in general.

In \eqref{1-pythagorean_formula} we will give the following explicit formula for the $d_1$ metric:
$$d_1(u_0,u_1) = AM(u_0) + AM(u_1) - 2AM(P(u_0,u_1)),\ u_0,u_1 \in \mathcal E^1(X,\o),$$
where $P(u_0,u_1) = \sup \{u: u \in \textup{PSH}(X,\o), \ u \leq u_0,u_1 \} \in \mathcal E^1(X,\o)$. This will immediately imply that $\mathcal J$--properness is equivalent to $d_1$--properness, which in turn gives the equivalence between (i) and (iii) by the above mentioned result of Tian.

Lastly, we mention that existence of K\"ahler--Einstein metrics on Fano manifolds is also known to be equivalent to the $\mathcal J$--properness of the Mabuchi K-energy functional restricted to $\mathcal H$. With a better understanding of the extension of this functional to $\mathcal E^1_{AM}(X,\o)$ one could hope for analogous results in terms of this functional instead of the Ding energy. For results in this direction we refer to \cite{bb2, st1,st2}.

\vspace{-0.1in}
\paragraph{Future work.} Continuing to explore the $d_\chi$--geometry of the K\"ahler--Ricci flow, in a future correspondence we will construct $d_\chi$--geodesic rays that are weakly asymptotic to diverging K\"ahler--Ricci trajectories, giving an affirmative answer to questions raised in \cite[Conjecture 4.10]{r}. These geodesic rays will have special properties, in particular the Ding energy is always decreasing along them. In case of the $L^2$ metric, this will help settle questions raised in \cite{h2}, where in analogy with Donaldson's original program on constant scalar curvature metrics \cite{do}, the equivalence between existence of K\"ahler--Einstein metrics and properness of the Ding energy along $d_2$--geodesic rays was proposed. As pointed out to us by R. Berman, his paper \cite{brm1}  and the results of \cite{cds1,cds2,cds3,t2} already imply this last equivalence.
Continuing the trend of the present work,  our methods will be purely analytical and will allow us to avoid the recently established equivalence between K-stability and existence of K\"ahler--Einstein metrics on Fano manifolds.
\vspace{0.1in}

\noindent \emph{Acknowledgements.} I would like to thank L. Lempert for his support and for his interest in this work. I also profited from discussions with R. Berman, V. Guedj, W. He and  Y. Rubinstein.

\section{Preliminaries}

\subsection{Orlicz spaces}

We record here some facts about Orlicz spaces that will be used the most. For a thorough introduction we refer to \cite{rr}. Suppose $(\Omega,\Sigma, \mu)$ is a Borel measure space with $\mu(\Omega)=1$ and $(\chi,\chi^*)$ is a complementary pair of Young weights. This means that $\chi:\Bbb R \to \Bbb R^+ \cup \{ \infty \}$ is convex, even, lower semi-continuous  and satisfies the normalizing conditions $\chi(0)=0$, $1 \in \partial \chi(1)$. Such $\chi$ alone is called a normalized Young weight. The complement $\chi^*$ is just the Legendre transform of $\chi$:
\begin{equation}
\chi^*(h) = \sup_{l \in \Bbb R} (lh - \chi(l)).
\end{equation}
One can see that $\chi^*$ is also a normalized Young weight and $(\chi,\chi^*)$ satisfies the Young identity and inequality:
\begin{equation}\label{YoungIdIneq}
\chi(\chi'(a)) + \chi^*(a)=a\chi'(a), \ \chi(a) + \chi^*(b) \geq ab, \ a,b \in \Bbb R.
\end{equation}
The most typical example to keep in mind is the pair $\chi_p(l)=|l|^p/p$ and $\chi(l)^*_p=|l|^q/q$, where $p,q > 1$ and $1/p + 1/q = 1$.
Let $L^\chi(\mu)$ be the following space of measurable functions:
$$L^\chi(\mu)=\Big\{ f:\Omega \to \Bbb R \cup \{ \infty,-\infty\}: \ \exists r >0 \textup{ s.t. } \int_\Omega \chi(rf) d\mu < \infty\Big\}.$$
One can introduce the following (gauge) norm on $L^\chi(\mu)$:
\begin{equation}\label{OrliczNormDef}
\|f\|_{\chi,\mu}=\inf \Big\{ r > 0 : \ \int_\Omega \chi\Big(\frac{f}{r}\Big) d\mu \leq \chi(1) \Big\}.
\end{equation}
The reference \cite{rr} uses the notation $N_\chi^\mu(f)$ instead, however in our context this notation seems to be more cumbersome when carrying out computations. The set $\{ \int_\Omega \chi(f) d\mu \leq \chi(1)\}$ is convex and symmetric in $L^\chi(\mu)$, hence $\|\cdot\|_{\chi,\mu}$ is nothing but the Minkowski seminorm of this set. One can verify that $\|f\|_{\chi,\mu}=0$ implies $f=0$ a.e. with respect to $\mu$, hence  $\|\cdot\|_{\chi,\mu}$ is a norm that makes $L^\chi(\mu)$ a complete metric space (see \cite[Theorem 3.3.10]{rr}). The reason we work with a complementary pair of Young weights is the H\"older inequality which follows from \eqref{YoungIdIneq}:
\begin{equation}\label{HolderIneq}
\int_X fg d\mu \leq \|f\|_{\chi,\mu} \|g\|_{\chi^*,\mu}, \ f \in L^\chi(\mu), \ g \in L^{\chi^*}(\mu).
\end{equation}
Coming back to examples, $L^{\chi_p}(\mu)=L^p(\mu)$ and $\|\cdot\|_{\chi^p,\mu}$ is just the usual $L^p$ norm.

Motivated by the study in \cite[Section 3]{gz}, we will be interested in normalized Young weights that are finite and satisfy the following growth estimate:
\begin{equation}\label{GrowthEst}
l\chi'(l) \leq p\chi(l), \ l >0,
\end{equation}
for some $p \geq 1$. For such weights we write $\chi \in \mathcal W^+_p$ (see \cite{gz}). We note that in the theory of Orlicz spaces the notation $\chi \in \Delta_2 = \bigcup_{p\geq 1} \mathcal W^+_p$ is used (see \cite{rr}). The main point is the following estimate :
\begin{proposition} For $\chi \in \mathcal W^+_p, \ p \geq 1$ and $0 < \varepsilon < 1$ we have
\begin{equation}\label{GrowthControl}
\varepsilon^p \chi(l) \leq \chi(\varepsilon l) \leq \varepsilon \chi(l), \ l > 0.
\end{equation}
\end{proposition}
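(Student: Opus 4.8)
The plan is to treat the two inequalities in \eqref{GrowthControl} separately, deriving the upper bound from convexity alone and the lower bound from the growth estimate \eqref{GrowthEst}.

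For the upper bound $\chi(\varepsilon l) \leq \varepsilon\chi(l)$, I would simply invoke convexity together with the normalization $\chi(0)=0$. Writing $\varepsilon l = \varepsilon\cdot l + (1-\varepsilon)\cdot 0$ with $0 < \varepsilon < 1$, convexity gives $\chi(\varepsilon l) \leq \varepsilon\chi(l) + (1-\varepsilon)\chi(0) = \varepsilon\chi(l)$. No use of the growth condition is needed here.

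For the lower bound $\varepsilon^p\chi(l) \leq \chi(\varepsilon l)$, the idea is to show that the quotient $\phi(t) := \chi(t) t^{-p}$ is non-increasing on $(0,\infty)$. Since $\chi \in \mathcal W^+_p$ is finite, it is convex and therefore locally Lipschitz on $(0,\infty)$; consequently $\phi$ is locally absolutely continuous there, being the product of $\chi$ with the smooth positive function $t \mapsto t^{-p}$. Differentiating where $\chi'$ exists (that is, a.e.), I compute
$$\phi'(t) = t^{-p-1}\big(t\chi'(t) - p\chi(t)\big) \leq 0,$$
the inequality being exactly the growth estimate \eqref{GrowthEst}. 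Hence $\phi$ is non-increasing, and applying this with $\varepsilon l \leq l$ for $0 < \varepsilon < 1$ yields $\chi(\varepsilon l)(\varepsilon l)^{-p} = \phi(\varepsilon l) \geq \phi(l) = \chi(l) l^{-p}$, which rearranges to $\chi(\varepsilon l) \geq \varepsilon^p\chi(l)$.

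The only point requiring care is the passage from the a.e. pointwise inequality $\phi' \leq 0$ to genuine monotonicity of $\phi$, since $\chi$ need not be differentiable at every point. This is handled by the observation that finiteness and convexity make $\chi$ locally Lipschitz, hence $\phi$ locally absolutely continuous, so that the fundamental theorem of calculus applies and the a.e. sign of $\phi'$ controls the monotonicity of $\phi$. Equivalently, one may interpret $\chi'$ in \eqref{GrowthEst} as the (non-decreasing, right-continuous) right derivative and integrate directly. I expect this to be the main, though minor, obstacle; everything else is immediate from convexity and the normalizing conditions.
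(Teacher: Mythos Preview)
Your proof is correct and follows essentially the same idea as the paper's: both obtain the upper bound from convexity and the lower bound by integrating the differential inequality $t\chi'(t)\leq p\chi(t)$. The one technical difference is that the paper, in order to integrate $\chi'/\chi$ (equivalently, to take a logarithm), first perturbs to $\chi_\delta(l)=\chi(l)+\delta|l|$ so that $\chi_\delta>0$ on $(0,\infty)$, obtains $\varepsilon^p\chi_\delta(l)\leq\chi_\delta(\varepsilon l)$, and then lets $\delta\to 0$. Your formulation, which tracks the monotonicity of $t\mapsto\chi(t)/t^p$ instead, never divides by $\chi$ and so avoids this perturbation step altogether; in that sense it is marginally more direct, while the paper's version makes the a.e.\ differentiability issue invisible by working with a strictly positive integrand.
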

\begin{proof}
The second estimate follows from convexity of $\chi$. For the first estimate we notice that for any $\delta >0$ we still have $\chi_\delta(l) = \chi(l) + \delta|l| \in \mathcal W^+_p$. As $\chi_{\delta}(h) >0$ for $h >0$, we can integrate $\chi'_\delta(h)/\chi_\delta(h) \leq p/h$ from $\varepsilon l$ to $l$ to obtain:
$$\varepsilon^p \chi_\delta(l) \leq \chi_\delta(\varepsilon l).$$
Letting $\delta \to 0$ the desired estimate follows.
\end{proof}

Estimate \eqref{GrowthControl} immediately implies that for $f \in L^\chi(\mu)$ we have
\begin{equation}\label{OrliczNormId}
\|f\|_{\chi,\mu}=\alpha>0 \textup{ if and only if } \int_\Omega \chi\Big( \frac{f}{\alpha}\Big)\mu=\chi(1).
\end{equation}

To simplify future notation, we introduce the bijective increasing functions $M_p(l) = \max\{l,l^p\}$, $m_p(l) = \min\{l,l^p\}$ for $p >0, l \geq 0$. Observe that $M_p \circ m_{1/p} = m_p \circ M_{1/p}=Id$. As a consequence of \eqref{GrowthControl}, we can establish the following very useful estimates:
\begin{proposition} \label{NormIntegralEst} If $\chi \in \mathcal W_p^+$  and $f \in L^\chi(\mu)$ then
\begin{equation}
 m_p(\|f\|_{\chi,\mu}) \leq \frac{\int_\Omega \chi(f)d\mu}{\chi(1)} \leq M_p(\|f\|_{\chi,\mu}).
\end{equation}
\begin{equation}
 m_{1/p}\Big(\frac{\int_\Omega \chi(f)d\mu}{\chi(1)}\Big) \leq \|f\|_{\chi,\mu} \leq M_{1/p}\Big(\frac{\int_\Omega \chi(f)d\mu}{\chi(1)}\Big).
\end{equation}
Hence, given a sequence $\{ f_j\}_{j \in \Bbb N}$, we have $\|f_j\|_{\chi,\mu} \to 0$ if and only if $\int_\Omega \chi(f_j)d\mu \to 0$ and $\|f_j\|_{\chi,\mu} \to N$ for $N > 0$ if and only if $\int_\Omega \chi(f_j/N)d\mu \to \chi(1)$.
\end{proposition}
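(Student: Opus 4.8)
The plan is to reduce everything to the scaling estimate \eqref{GrowthControl} together with the norm characterization \eqref{OrliczNormId}. First I would set $\alpha = \|f\|_{\chi,\mu}$. If $\alpha = 0$ then $f = 0$ a.e. and both sides of the first estimate vanish, so I may assume $\alpha > 0$; since $f \in L^\chi(\mu)$ we also have $\alpha < \infty$, and \eqref{OrliczNormId} gives $\int_\Omega \chi(f/\alpha)\,d\mu = \chi(1)$. The whole point is to compare $\int_\Omega \chi(f)\,d\mu$ with this normalized integral by inserting the correct scaling factor, and since \eqref{GrowthControl} behaves differently according to whether the scaling factor exceeds $1$, this forces a split into the cases $\alpha \geq 1$ and $0 < \alpha \leq 1$.

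In the case $\alpha \geq 1$ I would apply \eqref{GrowthControl} with $\varepsilon = 1/\alpha \leq 1$ and $l = |f|$ (using that $\chi$ is even), obtaining pointwise $\alpha^{-p}\chi(f) \leq \chi(f/\alpha) \leq \alpha^{-1}\chi(f)$; integrating and using $\int_\Omega \chi(f/\alpha)\,d\mu = \chi(1)$ yields $\alpha\,\chi(1) \leq \int_\Omega \chi(f)\,d\mu \leq \alpha^p\,\chi(1)$, which is exactly $m_p(\alpha) \leq \int_\Omega\chi(f)\,d\mu/\chi(1) \leq M_p(\alpha)$ because $m_p(\alpha)=\alpha$ and $M_p(\alpha)=\alpha^p$ for $\alpha \geq 1$. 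In the case $0<\alpha\leq 1$ I would instead apply \eqref{GrowthControl} with $\varepsilon = \alpha$ and $l = |f|/\alpha$, giving $\alpha^p\chi(f/\alpha) \leq \chi(f) \leq \alpha\,\chi(f/\alpha)$; integrating produces $\alpha^p\,\chi(1) \leq \int_\Omega\chi(f)\,d\mu \leq \alpha\,\chi(1)$, which is again the first estimate since now $m_p(\alpha)=\alpha^p$ and $M_p(\alpha)=\alpha$. This settles the first displayed inequality.

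For the second inequality I would not repeat the argument but invert the first one. Writing $A = \int_\Omega\chi(f)\,d\mu/\chi(1)$, the first estimate reads $m_p(\|f\|) \leq A \leq M_p(\|f\|)$. Since $m_p, M_p, m_{1/p}, M_{1/p}$ are all increasing, applying $M_{1/p}$ to the left half and $m_{1/p}$ to the right half, and using the composition identities in the form $M_{1/p}\circ m_p = \mathrm{Id}$ and $m_{1/p}\circ M_p = \mathrm{Id}$ (these are the stated identities $M_q\circ m_{1/q}=m_q\circ M_{1/q}=\mathrm{Id}$ with $q=1/p$), collapses the inner compositions back to $\|f\|$ and delivers $m_{1/p}(A)\leq\|f\|\leq M_{1/p}(A)$. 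Finally, the convergence statements follow by squeezing: $\|f_j\|\to 0$ forces $A_j \leq M_p(\|f_j\|)\to 0$, and conversely $A_j\to0$ forces $\|f_j\|\leq M_{1/p}(A_j)\to 0$; while for $N>0$ I would rescale to $g_j = f_j/N$ so that $\|f_j\|\to N$ becomes $\|g_j\|\to 1$ and apply $m_p(1)=M_p(1)=1$ (respectively $m_{1/p}(1)=M_{1/p}(1)=1$) to squeeze $\int_\Omega\chi(g_j)\,d\mu/\chi(1)$ to $1$ and back.

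I do not expect a genuine obstacle here: the argument is elementary once \eqref{GrowthControl} and \eqref{OrliczNormId} are in hand. The only points requiring care are making sure that the evenness of $\chi$ legitimately lets me apply \eqref{GrowthControl} (stated for $l>0$) to an $f$ of arbitrary sign, choosing the scaling factor $\varepsilon$ correctly in each of the two cases, and keeping the bookkeeping of the inverse identities straight when passing from the first estimate to the second.
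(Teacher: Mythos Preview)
Your proposal is correct and is precisely the argument the paper has in mind: the paper gives no explicit proof for this proposition, simply introducing it with ``As a consequence of \eqref{GrowthControl}, we can establish the following very useful estimates,'' and your derivation is exactly that consequence. The case split on $\alpha \geq 1$ versus $\alpha \leq 1$, the pointwise use of \eqref{GrowthControl} with the appropriate choice of $\varepsilon$, the inversion via the composition identities $M_{1/p}\circ m_p = m_{1/p}\circ M_p = \mathrm{Id}$, and the squeeze for the convergence statements are all straightforward and match the intended route.
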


In the future we will approximate Orlicz norms with weight in $\mathcal W^+_p$ with Orlicz norms having smooth weights. The following two approximation results, which are by no means optimal, will be very useful for us:

\begin{proposition} \label{approx_lemma}Suppose $\chi \in \mathcal W^+_p$ and $\{\chi_k \}_{k \in \Bbb N}$ is a sequence of normalized Young weights that converges uniformly on compacts to $\chi.$ Let $f$ be a bounded $\mu$--measurable function on $X$. Then $f \in L^\chi(\mu),L^{\chi_k}(\mu), \ k \in \Bbb N$ and we have that
$$\lim_{k \to +\infty}\| f\|_{\chi_k,\mu} = \| f\|_{\chi,\mu}.$$
\end{proposition}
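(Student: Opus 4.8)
The plan is to reduce the statement to a one–dimensional threshold–crossing argument for the monotone functions $r\mapsto \int_\Omega \chi(f/r)\,d\mu$. First I would dispose of trivialities: if $f=0$ $\mu$-a.e. all the norms vanish, so I may assume $\|f\|_{\chi,\mu}>0$ and set $K=\mathrm{ess\,sup}\,|f|\in(0,\infty)$. Membership $f\in L^\chi(\mu)\cap\bigcap_k L^{\chi_k}(\mu)$ is then immediate: taking $r=1/K$ we have $|rf|\le 1$ a.e., and since each normalized Young weight is finite and nondecreasing on $[0,1]$ with $\chi_k(1)<\infty$ (the subdifferential $\partial\chi_k(1)\ni 1$ being nonempty forces finiteness), we get $\int_\Omega \chi_k(rf)\,d\mu\le \chi_k(1)<\infty$, and likewise for $\chi$.

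Next I would introduce $G(r)=\int_\Omega \chi(f/r)\,d\mu$ and $G_k(r)=\int_\Omega \chi_k(f/r)\,d\mu$ for $r>0$. Because $\chi\in\mathcal W^+_p$ is finite (hence continuous) and $f$ is bounded, dominated convergence shows $G$ is continuous; the growth control \eqref{GrowthControl}, in the form $\chi(\varepsilon l)\le \varepsilon\chi(l)$ extended to all $l$ by evenness, shows $G$ is strictly decreasing on the region where it is positive, which here is all of $(0,\infty)$ since $\chi(1)>0$ and $f\ne 0$ on a set of positive measure. By \eqref{OrliczNormId} the number $\alpha:=\|f\|_{\chi,\mu}$ is the unique solution of $G(\alpha)=\chi(1)$. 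The key analytic input is the convergence of the auxiliary functions: since $|f/r|\le K/r$ a.e. and $\chi_k\to\chi$ uniformly on the compact interval $[-K/r,K/r]$, and $\mu(\Omega)=1$, I obtain $|G_k(r)-G(r)|\le \sup_{[-K/r,K/r]}|\chi_k-\chi|\to 0$ for each fixed $r>0$, together with $\chi_k(1)\to\chi(1)$.

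With these in hand I would prove $\limsup_k\alpha_k\le\alpha$ and $\liminf_k\alpha_k\ge\alpha$ separately, where $\alpha_k:=\|f\|_{\chi_k,\mu}$. For the upper bound, fix $\delta>0$; strict monotonicity gives $G(\alpha+\delta)<\chi(1)$, so by the pointwise convergence and $\chi_k(1)\to\chi(1)$ we have $G_k(\alpha+\delta)<\chi_k(1)$ for all large $k$, whence $\alpha+\delta$ lies in the defining set of \eqref{OrliczNormDef} for $\chi_k$ and $\alpha_k\le\alpha+\delta$. For the lower bound, fix $\delta\in(0,\alpha)$; then $G(\alpha-\delta)>\chi(1)$, so $G_k(\alpha-\delta)>\chi_k(1)$ for large $k$, and since $G_k$ is non-increasing every $r\le\alpha-\delta$ also satisfies $G_k(r)>\chi_k(1)$, forcing $\alpha_k\ge\alpha-\delta$ from the infimum in \eqref{OrliczNormDef}. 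Letting $\delta\to 0$ yields $\alpha_k\to\alpha$.

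The main obstacle I anticipate is precisely that the approximants $\chi_k$ are only normalized Young weights and need not lie in $\mathcal W^+_p$; consequently I cannot invoke the clean equality \eqref{OrliczNormId} for them, and I must argue entirely through the non-increasing defining functions $G_k$ and the infimum \eqref{OrliczNormDef}, while tracking the moving threshold $\chi_k(1)$ rather than the fixed value $\chi(1)$. A secondary point requiring care is that uniform convergence only on compacts suffices precisely because $f$ is bounded, so that for each fixed $r$ the values of $f/r$ remain in a single compact interval; this boundedness is exactly what legitimizes the interchange of limit and integral without any further integrability hypothesis on the $\chi_k$.
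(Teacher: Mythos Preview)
Your proof is correct and follows essentially the same route as the paper's: both arguments use the strict monotonicity of $r\mapsto\int_\Omega\chi(f/r)\,d\mu$ (coming from $\chi\in\mathcal W^+_p$ via \eqref{OrliczNormId}/\eqref{GrowthControl}) together with the convergence of $\int_\Omega\chi_k(f/r)\,d\mu$ and $\chi_k(1)$ to their $\chi$-counterparts to trap $\|f\|_{\chi_k,\mu}$ in $[\alpha-\delta,\alpha+\delta]$ for large $k$. Your explicit remark that the $\chi_k$ need not lie in $\mathcal W^+_p$, so one must argue via the infimum definition and the mere non-increase of $G_k$ rather than via \eqref{OrliczNormId}, is a nice point the paper leaves implicit.
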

\begin{proof} Suppose $N=\| f\|_{\chi,\mu}$. If $N=0$, then $f=0$ a.e. with respect to $\mu$ implying that $\| f\|_{\chi_k,\mu} = \| f\|_{\chi,\mu}=0$. So we assume that $N >0$. As $\chi \in \mathcal W^+_p$, by \eqref{OrliczNormId}, for any $\varepsilon >0$ there exists $\delta >0$ such that
$$\int_X \chi\Big(\frac{f}{(1+\varepsilon)N}\Big) d\mu<\chi(1)-2\delta< \chi(1) <\chi(1)+2\delta<\int_X \chi\Big(\frac{f}{(1-\varepsilon)N}\Big) d\mu$$
As $\chi_k$ tends uniformly on compacts to $\chi$, $f$ is bounded and $\mu(X)=1$, it follows from the dominated convergence theorem that for $k$ greater then some $k_0$ we have
$$\int_X \chi_k\Big(\frac{f}{(1+\varepsilon)N}\Big) d\mu<\chi_k(1)-\delta< \chi_k(1) <\chi_k(1)+\delta<\int_X \chi_k\Big(\frac{f}{(1-\varepsilon)N}\Big) d\mu$$
This implies that $(1-\varepsilon)N \leq  \| f\|_{\chi_k,\mu} \leq (1+\varepsilon)N$ for $k \geq k_0$, from which the conclusion follows.
\end{proof}
\begin{proposition} \label{approx_lemma2}Given $\chi \in \mathcal W^+_p$, there exists $\chi_k \in \mathcal W^+_{p_k} \cap C^\infty(\Bbb R),$ $k \in \Bbb N$ with $\{p_k\}_k$ possibly unbounded such that $\chi_k \to \chi$ uniformly on compacts.
\end{proposition}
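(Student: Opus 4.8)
The plan is to produce the smooth approximants by mollification, then apply an affine renormalization that restores the two normalizing conditions $\chi_k(0)=0$ and $1\in\partial\chi_k(1)$, and finally to verify that the resulting weights satisfy a growth bound $l\chi_k'(l)\le p_k\chi_k(l)$ with a finite but possibly large $p_k$.

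First I would fix an even nonnegative mollifier $\rho_\epsilon\in C_c^\infty(\mathbb{R})$ supported in $[-\epsilon,\epsilon]$ with $\int\rho_\epsilon=1$, and set $\psi_\epsilon=\chi*\rho_\epsilon$. Since $\chi$ is finite and convex it is continuous, so $\psi_\epsilon$ is smooth, convex and even, and $\psi_\epsilon\to\chi$ uniformly on compact sets; moreover $\psi_\epsilon(0)=\int\chi(-y)\rho_\epsilon(y)\,dy\to\chi(0)=0$. Hence $\phi_\epsilon:=\psi_\epsilon-\psi_\epsilon(0)$ is a smooth, convex, even function with $\phi_\epsilon(0)=0$, $\phi_\epsilon\ge 0$, and $\phi_\epsilon\to\chi$ uniformly on compacts. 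The growth condition is equivalent to $l\mapsto\chi(l)/l^p$ being non-increasing on $(0,\infty)$, which together with $\chi\not\equiv 0$ (forced by $1\in\partial\chi(1)$) yields $\chi>0$ and $\chi'>0$ on $(0,\infty)$; from this one checks $\phi_\epsilon'>0$ on $(0,\infty)$ and $\phi_\epsilon''(0)>0$.

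The only normalizing condition not yet met is $\chi_k'(1)=1$. Using $1\in\partial\chi(1)$, i.e. $\chi'_-(1)\le 1\le\chi'_+(1)$, together with $\phi_\epsilon'\to\chi'$ at continuity points, a short monotonicity estimate shows that for every $\delta>0$ and all small enough $\epsilon$ one has $\phi_\epsilon'(1-\delta)\le 1\le\phi_\epsilon'(1+\delta)$; by the intermediate value theorem there is $b_\epsilon\in[1-\delta,1+\delta]$ with $\phi_\epsilon'(b_\epsilon)=1$. Letting $\delta\to0$ along a subsequence produces $b_\epsilon\to1$, and I set $\chi_\epsilon(l)=\phi_\epsilon(b_\epsilon l)/b_\epsilon$. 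Then $\chi_\epsilon$ is a smooth normalized Young weight, since $\chi_\epsilon(0)=0$ and $\chi_\epsilon'(1)=\phi_\epsilon'(b_\epsilon)=1$, and because $b_\epsilon\to1$ and $\chi$ is uniformly continuous on compacts we get $\chi_\epsilon\to\chi$ uniformly on compacts. I single this renormalization out because it is the step that forces the intermediate value argument: a naive rescaling of the slope at $1$ would only produce a limit of the form $\chi/\chi'_\pm(1)$ whenever $\chi$ has a corner or an affine piece at $1$, and so would fail to converge to $\chi$.

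The main point, and the step I expect to require the most care, is the growth estimate. Since the ratio $R_g(l):=lg'(l)/g(l)$ is scale invariant, it suffices to bound $R_{\phi_\epsilon}$ on $(0,\infty)$. Near the origin $\phi_\epsilon(l)\sim\tfrac12\phi_\epsilon''(0)l^2$ with $\phi_\epsilon''(0)>0$, so $R_{\phi_\epsilon}(l)\to 2$ as $l\to0^+$, giving a bound on a neighbourhood of $0$. For $l>\epsilon$ I would transfer the growth condition on $\chi$ through the convolution: from $\chi'(t)\le\tfrac{p}{t}\chi(t)$ one gets $\psi_\epsilon'(l)=\int\chi'(l-y)\rho_\epsilon(y)\,dy\le\frac{p}{l-\epsilon}\psi_\epsilon(l)$, so $l\psi_\epsilon'(l)\le\frac{pl}{l-\epsilon}\psi_\epsilon(l)$; since $\psi_\epsilon(l)\to\infty$ and $\phi_\epsilon=\psi_\epsilon-\psi_\epsilon(0)$, for $l$ beyond some $L_\epsilon$ this yields $R_{\phi_\epsilon}(l)\le 4p$, while on the compact interval $[\delta_0,L_\epsilon]$ continuity and positivity of $\phi_\epsilon$ give a bound. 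Hence $p_\epsilon:=\sup_{l>0}R_{\phi_\epsilon}(l)<\infty$ and $\chi_\epsilon\in\mathcal W^+_{p_\epsilon}\cap C^\infty(\mathbb{R})$. Taking $\epsilon=\epsilon_k\to0$ furnishes the sequence $\chi_k$; since $L_\epsilon$ and the near-origin bound depend on $\epsilon$, the exponents $p_k$ need not remain bounded, exactly as the statement allows. The delicate points are thus keeping the denominator $\phi_\epsilon$ bounded away from zero on the intermediate range and controlling the transition between the ``near $0$'' regime (ratio $\to 2$) and the ``large $l$'' regime where the growth of $\chi$ is inherited.
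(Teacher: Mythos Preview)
Your proof is correct and follows the same overall structure as the paper's: mollify $\chi$, subtract the value at zero, and rescale the argument to restore the normalization $\chi_k'(1)=1$. The paper carries out the rescaling slightly differently---it solves $h_k\,(\delta_k\star\chi)'(h_k)=1$ rather than your $\phi_\epsilon'(b_\epsilon)=1$---but this is cosmetic.

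The genuine difference lies in how the growth estimate $l\chi_k'(l)\le p_k\chi_k(l)$ is obtained. The paper does not attempt to verify it for the mollified weight at all; instead it simply asserts that one can smoothly modify $\tilde\chi_k$ on the regions $|l|\le 1/k$ and $|l|\ge k$ so that the resulting weight lies in some $\mathcal W^+_{p_k}$, leaving all details to the reader. You instead verify the bound directly for $\phi_\epsilon$ by splitting into three regimes: a Taylor expansion near $0$ (using $\phi_\epsilon''(0)>0$) giving $R_{\phi_\epsilon}\to 2$, the convolution estimate $\psi_\epsilon'(l)\le\tfrac{p}{l-\epsilon}\psi_\epsilon(l)$ for large $l$, and continuity and positivity on the compact middle interval. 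This is more work but also more transparent, and it explains cleanly why $p_\epsilon$ may blow up as $\epsilon\to 0$: the matching between the three regimes depends on $\epsilon$. The paper's ad hoc modification achieves the same outcome with less effort but less explanation.

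One small point worth tightening: your intermediate-value argument for $b_\epsilon$ relies on $\phi_\epsilon'\to\chi'$ at continuity points of $\chi'$, which runs into trouble if $\chi'$ is constant equal to $1$ on a neighbourhood of $1$. The cleanest fix is to observe directly that for $y\in[-\epsilon,\epsilon]$ one has $1-\epsilon-y\le 1\le 1+\epsilon-y$, so monotonicity of $\chi'$ gives $\phi_\epsilon'(1-\epsilon)\le\chi'_-(1)\le 1\le\chi'_+(1)\le\phi_\epsilon'(1+\epsilon)$, yielding $b_\epsilon\in[1-\epsilon,1+\epsilon]$ without any appeal to continuity points.
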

\begin{proof} There are many ways to construct the sequence $\chi_k$. First we smoothen and normalize $\chi$, introducing the sequence $\tilde \chi_k$ in the process:
$$\tilde \chi_k(l) = (\delta_{k} \star \chi)(h_k l) - (\delta_{k} \star \chi)(0), \ l \in \Bbb R, k \in \Bbb N^*.$$
Here $\delta$ is a typical choice of bump function with support in $(-1,1)$, $\delta_k(\cdot) =k \delta(k(\cdot))$, and $h_k >0$ is chosen in such a way that $\tilde \chi_k$ becomes normalized ($\tilde \chi_k(0)=0$ and $\tilde \chi_k'(1)=1$). As $\chi$ is normalized it follows that $1-1/k \leq h_k \leq 1 +1/k$, hence the $\tilde \chi_k$ are normalized Young weights that converge to $\chi$ uniformly on compacts.

As $\chi$ is strictly increasing, so is each $\tilde \chi_k$, but
our construction does not seem to guarantee that $\tilde \chi_k \in \mathcal W^+_{p_k}$ for some $p_k \geq 1$. This can be fixed by changing smoothly the values of $\tilde \chi_k$ on the sets $|x| \leq 1/k$ and $|x| > k$ in such a manner that that the possibly altered weights $\chi_k$ satisfy the estimate
$$l \chi'_k(l) \leq p_k \chi_k(l), \ l >0,$$
for some $p_k  \geq 1$. The sequence we obtained satisfies the required properties.
\end{proof}

\subsection{Weak Geodesics in the Metric Space $(\mathcal H,d_2)$}

We summarize some of the properties of the metric space $(\mathcal H,d_2)$ that we will need later, for a thorough introduction we refer to \cite{bl1}.  If in \eqref{FinslerDef} we choose $\chi(l)=\chi_2(l)=l^2/2$, we obtain the Mabuchi Riemannian metric on $\mathcal H$. Given a smooth curve $(0,1) \ni t \to v_t \in \mathcal H$ and a vector field $(0,1) \ni t \to f_t \in C^\infty(X)$ along this curve, the covariant derivative $\nabla_{\dot u_t} f_t$ is as follows:
\begin{equation}\label{CovDerivative}
\nabla_{\dot u_t} f_t = \dot f_t - \frac{1}{2}\langle \nabla^{\o_{u_t}} \dot u_t , \nabla^{\o_{u_t}} f_t \rangle_{\o_{u_t}}
\end{equation}

Suppose $S = \{ 0 < \textup{Re }s < 1\}\subset \Bbb C$. Following \cite{s}, one can compute that a smooth curve $[0,1] \ni t \to u_t \in \mathcal H$ connecting $u_0,u_1 \in \mathcal H$ is a geodesic if it is the (unique) smooth solution of the  following Dirichlet problem on $S \times X$:
\begin{alignat}{2}\label{BVPGeod}
&(\pi^* \o + i \partial \overline{\partial}u)^{n+1}=0 \nonumber\\
&u(t+ir,x) =u(t,x) \ \forall x \in X, t \in (0,1), r \in \Bbb R \\
&\lim_{t \to 0,1}u_t=u_{0,1} \textup{ uniformly in }X,\nonumber
\end{alignat}
where $u(s,x)=u_{\textup{Re }s}(x)$ is the complexification of $t \to u_t$. Unfortunately the above problem does not usually have smooth solutions (see \cite{lv,da3}), but a unique solution in the sense of Bedford-Taylor does exist and with bounded Laplacian (see \cite{c}) and this regularity is essentially optimal (see \cite{dl}). This curve $[0,1] \ni t \to u_t \in \mathcal H_\Delta := \textup{PSH}(X,\omega)\cap \{ \Delta v \in L^\infty\}$ is called the weak geodesic connecting $u_0,u_1$, and it can be approximated by smooth $\varepsilon$--geodesics which are solutions of the following elliptic Dirichlet problem on $[0,1]\times X$:
\begin{alignat}{2}\label{BVPEpsGeod}
&(\ddot{u^\varepsilon_t} - \frac{1}{2} \langle \nabla\dot u^\varepsilon_t, \nabla\dot u^\varepsilon_t\rangle)(\o + i\partial\bar\partial u^\varepsilon_t)^n=\varepsilon \o^n, \ t \in [0,1].\\
&\lim_{t \to 0,1}u_t=u_{0,1} \textup{ uniformly in }X,\nonumber
\end{alignat}
Chen proved that the path length metric can be computed using $\varepsilon$--geodesics:
\begin{equation}\label{DistMabuchiApr}
d_2(u_0,u_1)=\lim_{\varepsilon \to 0} l_\varepsilon(u^\varepsilon),
\end{equation}
and there exists $C>0$ independent of $\varepsilon$ such that
\begin{equation}\label{EpsGeodLaplEst}
\| \Delta u^\varepsilon\|_{L^\infty([0,1]\times X)} \leq C.
\end{equation}
From this it follows that the $\varepsilon$--geodesics $u^\varepsilon$ converge to the weak geodesic $u$ in $C^{1,\alpha}(\overline{S}\times X)$, $0 < \alpha < 1$ as $\varepsilon \to 0$.
Using this, one can relate $d_2(u_0,u_1)$ directly to the weak geodesic $t \to u_t$:
\begin{equation}\label{distgeod}
d_2(u_0,u_1) = \| \dot u_t\|_{2,u_t}=\sqrt{\frac{1}{\textup{Vol}(X)}\int_X\dot {u}_{t}^2\o_{u_{t}}^n}, \ t \in[0,1].
\end{equation}
This formula is geometrically justified, as in finite dimensional Riemannian geometry geodesics have constant speed. Although weak geodesics connecting points of $\mathcal H$ leave this space, they are bona fide geodesics in the metric completion of $(\mathcal H, d_2)$ (see \cite{d}).

Given $u_0,u_1 \in \mathcal H$ one can introduce the rooftop-envelope $P(u_0,u_1)$:
$$P(u_0,u_1)=\sup\{ v \in \text{PSH}(X,\o) \ | \ v \leq \min \{ u_0,u_1 \}\}.$$
As observed in \cite{dr}, one has  $P(u_0,u_1)\in \mathcal H_\Delta$, and we have the following partition formula for the volume form:
\begin{equation}\label{MA_formula}
\o_{P(u_0,u_1)}^n= \mathbbm{1}_{\{u_0 = P(u_0,u_1)\}}\o_{u_0}^n + \mathbbm{1}_{\{u_1 = P(u_0,u_1)\} \setminus \{u_0 = P(u_0,u_1)\}}\o_{u_1}^n.
\end{equation}
Moreover, these envelopes are intimately connected to the weak geodesic $[0,1] \ni t \to u_t \in \mathcal H_\Delta$, joining $u_0,u_1$ in the following manner:
\begin{equation}\label{WeakGeodEnvID}
\inf_{t \in [0,1]} (u_t - \tau t) = P(u_0,u_1 - \tau),
\end{equation}
\begin{equation}\label{SublevelSetId}
\{ \dot u_0 \geq \tau \} = \{ P(u_0,u_1 - \tau)=u_0\},
\end{equation}
for $\tau \in \Bbb R$. For the proof of these facts we refer to \cite[Section 2.2 and Lemma 6.5]{d} and \cite{dr}.

Developing \eqref{distgeod} further, by a result of Berndtsson \cite{br2}, the pushforward measures $\dot {u_t}_{*} \o_{u_t}$ along the weak geodesic segment are the same for any $t \in [0,1]$. As a consequence of this we obtain that weak geodesics have constant speed with respect to all $\chi$--Finsler metrics:
\begin{remark} \label{chilengthgeodremark} Suppose $[0,1] \ni t \to u_t \in \mathcal H_\Delta$ is the weak geodesic connecting $u_0,u_1 \in \mathcal H$ and $\chi\in\mathcal W^+_p$. Then we have
$$\|\dot u_0\|_{\chi,u_0}=\|\dot u_t\|_{\chi,u_t}, \ t \in [0,1].$$
\end{remark}
\begin{proof} Suppose $N = \|\dot u_0\|_{\chi,u_0}$. If $N=0$ then $\dot u_0 =0$, hence $d_2(u_0,u_1)=0$ by \eqref{distgeod}. This in turn implies $u_0=u_1$, finishing the proof in this case. If $N \neq 0$, then by the above mentioned result of Berndtsson it follows that
$$\chi(1) = \frac{1}{\textup{Vol}(X)}\int_X \chi\Big( \frac{\dot u_0}{N}\Big)\o_{u_0}^n = \frac{1}{\textup{Vol}(X)}\int_X \chi\Big( \frac{\dot u_t}{N}\Big)\o_{u_t}^n, \ t \in [0,1].$$
Hence, $N=\|u_0\|_{\chi,u_0}= \|u_t\|_{\chi,u_t}$.
\end{proof}
This remark suggests that weak geodesics have special role, not just in the $L^2$ geometry, but in the $\chi$--Finsler geometry of $\mathcal H$ as well. This is indeed the case, as we will see shortly.

\section{The Metric Spaces $(\mathcal H, d_\chi)$}

In this section we give the proof of Theorem \ref{XXChenThm}. Our method will follow Chen's original approach (\cite{c}, see \cite[Section 6]{bl1} for a recent survey) along with a careful analysis involving Orlicz norms. In hopes of easing the technical nature of future calculations, for Sections 3--5  we fix the volume normalizing condition
$$\textup{Vol}(X)=\int_X \o^n = 1.$$

Given a differentiable normalized Young weight $\chi$, the differentiability of the associated gauge norm is well understood (see \cite[Chapter VII]{rr}). Adapted to our setting we prove the following result which is essentially contained in \cite[Theorem VII.2.3]{rr}.

\begin{proposition}\label{OrliczNormDiff} Suppose $\chi \in \mathcal W^+_p \cap C^\infty(\Bbb R)$. Given a smooth curve $(0,1) \ni t \to u_t \in \mathcal H$, i.e. $u(t,x):=u_t(x) \in C^\infty((0,1)\times X)$, and a non-vanishing vector field $(0,1) \ni t \to f_t \in C^\infty(X)$ along this curve, the following formula holds:
\begin{equation}\label{OrliczNormDiffEq}
\frac{d}{dt}\|f_t\|_{\chi,u_t} = \frac{\int_X \chi' \Big(\frac{f_t}{\|f_t\|_{\chi,u_t}}\Big)\nabla_{\dot u_t}\dot f_t \omega_{u_t}^n}{\int_X \chi'\Big(\frac{f_t}{\|f_t\|_{\chi,u_t}}\Big) \frac{f_t}{\|f_t\|_{\chi,u_t}}  \omega_{u_t}^n},
\end{equation}
where $\nabla$ is the covariant derivative from \eqref{CovDerivative}.
\end{proposition}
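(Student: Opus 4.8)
The plan is to differentiate implicitly the gauge-norm identity. Set $N(t)=\|f_t\|_{\chi,u_t}$; since $\chi\in\mathcal W^+_p$ and $f_t\not\equiv 0$ we have $N(t)>0$, and \eqref{OrliczNormId} (recall $\textup{Vol}(X)=1$ in this section) provides the defining relation
\[\int_X\chi\Big(\frac{f_t}{N(t)}\Big)\o_{u_t}^n=\chi(1),\qquad t\in(0,1).\]
First I would verify that $N$ is $C^1$. The function $F(t,r)=\int_X\chi(f_t/r)\,\o_{u_t}^n$ is continuously differentiable for $r>0$ because $u,f$ and $\chi$ are smooth and $X$ is compact, and $N(t)$ is the unique positive root of $F(t,\cdot)=\chi(1)$. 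As $\chi$ is even and strictly increasing on $[0,\infty)$ and $f_t\not\equiv 0$, the derivative $\partial_rF(t,r)=-\frac1r\int_X\chi'(f_t/r)(f_t/r)\,\o_{u_t}^n$ is strictly negative, the integrand being nonnegative and positive on the nonempty open set $\{f_t\neq 0\}$, which carries positive $\o_{u_t}^n$-mass. The implicit function theorem then gives $N\in C^1$, and this same positivity shows that the denominator in \eqref{OrliczNormDiffEq} never vanishes, so its right-hand side is well defined.

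Next I would differentiate the defining relation in $t$, accounting both for the variation of the integrand and for the variation of the measure. For the latter I use $\frac{d}{dt}\o_{u_t}^n=n\,i\partial\bar\partial\dot u_t\wedge\o_{u_t}^{n-1}=(\Delta_{u_t}\dot u_t)\,\o_{u_t}^n$, where $\Delta_{u_t}=\mathrm{tr}_{\o_{u_t}}(i\partial\bar\partial\,\cdot\,)$. Writing $g_t=f_t/N(t)$ and differentiating under the integral sign (justified by smoothness and compactness), the relation becomes
\[0=\int_X\chi'(g_t)\Big(\frac{\dot f_t}{N}-\frac{f_t\dot N}{N^2}\Big)\o_{u_t}^n+\int_X\chi(g_t)\,\Delta_{u_t}\dot u_t\,\o_{u_t}^n.\]

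The decisive step is to integrate the last term by parts on the compact \K manifold $(X,\o_{u_t})$. With the inner-product convention underlying \eqref{CovDerivative} one has $\int_X\phi\,\Delta_{u_t}\psi\,\o_{u_t}^n=-\frac12\int_X\langle\nabla^{\o_{u_t}}\phi,\nabla^{\o_{u_t}}\psi\rangle_{\o_{u_t}}\o_{u_t}^n$; applying this with $\phi=\chi(g_t)$, $\psi=\dot u_t$ and using $\nabla\chi(g_t)=N^{-1}\chi'(g_t)\nabla f_t$ turns the measure term into $-\frac{1}{2N}\int_X\chi'(g_t)\langle\nabla^{\o_{u_t}}\dot u_t,\nabla^{\o_{u_t}}f_t\rangle_{\o_{u_t}}\o_{u_t}^n$. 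Multiplying the identity by $N$ and grouping this with the $\dot f_t$-term, I recognize exactly $\nabla_{\dot u_t}f_t=\dot f_t-\frac12\langle\nabla^{\o_{u_t}}\dot u_t,\nabla^{\o_{u_t}}f_t\rangle_{\o_{u_t}}$ from \eqref{CovDerivative}, which leaves
\[\frac{\dot N}{N}\int_X\chi'(g_t)f_t\,\o_{u_t}^n=\int_X\chi'(g_t)\,\nabla_{\dot u_t}f_t\,\o_{u_t}^n.\]
Dividing by $N^{-1}\int_X\chi'(g_t)f_t\,\o_{u_t}^n=\int_X\chi'(g_t)g_t\,\o_{u_t}^n$ yields \eqref{OrliczNormDiffEq}.

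I expect the only real difficulty to be bookkeeping rather than conceptual: one must align the normalization of the integration by parts with the inner-product convention baked into \eqref{CovDerivative}, and it is precisely the factor $\frac12$ there that makes the naive time derivative $\dot f_t$ and the metric-variation term assemble into the covariant derivative $\nabla_{\dot u_t}f_t$. For a fixed measure, as in the abstract statement \cite[Theorem VII.2.3]{rr}, the numerator would simply read $\int_X\chi'(g_t)\dot f_t\,\o_{u_t}^n$; the geometric content of the proposition is exactly that the moving volume form $\o_{u_t}^n$ promotes $\dot f_t$ to $\nabla_{\dot u_t}f_t$.
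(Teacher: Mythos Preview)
Your proof is correct and follows essentially the same route as the paper: define $F(t,r)=\int_X\chi(f_t/r)\,\o_{u_t}^n$, check $\partial_rF<0$ so the implicit function theorem applies to $F(t,N(t))=\chi(1)$, differentiate in $t$ picking up the $\Delta_{u_t}\dot u_t$ term from the moving measure, and integrate by parts. The paper compresses the last step into the single sentence ``An integration by parts yields \eqref{OrliczNormDiffEq},'' whereas you spell out how the $\tfrac12$-convention in \eqref{CovDerivative} matches the integration-by-parts identity and makes $\dot f_t$ and the gradient term assemble into $\nabla_{\dot u_t}f_t$; this is the same computation, just made explicit. (Incidentally, your numerator $\nabla_{\dot u_t}f_t$ is what the argument actually produces; the $\nabla_{\dot u_t}\dot f_t$ in the displayed statement is a typographical slip.)
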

\begin{proof}
We introduce the smooth function $F: \Bbb R^+ \times (0,1) \to \Bbb R$ given by
$$F(r,t) = \int_X \chi\Big(\frac{f_t}{r}\Big)\o_{u_t}^n.$$
By our assumption on $\chi$ we have $\chi'(l) >0, \ l >0$. As $t \to f_t$ is non-vanishing, it follows that
$$\frac{d}{dr} F(r,t)=- \frac{1}{r^2}\int_X {f_t} \chi'\Big(\frac{f_t}{r}\Big) \omega_{u_t}^n < 0$$
for all $r <0,\ t \in (0,1)$. Using
$F(\|f_t\|_{\chi,u_t},t) = \chi(1)$, an application of the implicit function theorem yields that the map $t \to \|f_{t}\|_{\chi,u_t}$ is differentiable and the following formula holds:
$$\frac{d}{dt}\|f_t\|_{\chi,u_t} =
\frac{\int_X \Big[\dot f_t \chi'\Big(\frac{f_t}{\|f_t\|_{\chi,u_t}}\Big) + \|f_t\|_{\chi,u_t} \chi\Big(\frac{f_t}{\|f_t\|_{\chi,u_t}}\Big) \Delta_{\omega_{u_t}} \dot u_t \Big] \omega_{u_t}^n}{\int_X \frac{f_t}{\|f_t\|_{\chi,u_t}} \chi'\Big(\frac{f_t}{\|f_t\|_{\chi,u_t}}\Big) \omega_{u_t}^n}.$$
An integration by parts yields \eqref{OrliczNormDiffEq}.
\end{proof}

\begin{corollary} \label{EpsGeodDiffCor} Suppose $\chi \in \mathcal W^+_p \cap C^\infty(\Bbb R)$ and $v_0,v_1 \in \mathcal H, \ v_0 \neq v_1$. Then there exists $\varepsilon_0(v_0,v_1) >0$ such that for any $\varepsilon < \varepsilon_0$ and $u_0,u_1 \in \mathcal H$ satisfying $\| u_0 - v_0\|_{C^2(X)}, \| u_1 - v_1\|_{C^2(X)} \leq \varepsilon_0$, the $\varepsilon$--geodesic $[0,1] \ni t \to u_t \in \mathcal H$ of \eqref{BVPEpsGeod}, connecting $u_0,u_1$ satisfies:
\begin{equation}\label{EpsGeodDiffEq}
\frac{d}{dt}\|\dot u_t\|_{\chi,u_t}=\varepsilon\frac{\int_X \chi'\Big(\frac{\dot u_t}{\|\dot u_t\|_{\chi,u_t}}\Big)\o^n} {\int_X \frac{\dot u_t}{\|\dot u_t\|_{\chi,u_t}} \chi'\Big(\frac{\dot u_t}{\|\dot u_t\|_{\chi,u_t}}\Big) \omega_{u_t}^n}, \ t \in [0,1].
\end{equation}
\end{corollary}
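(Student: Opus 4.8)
The computation itself is short, so the plan is to specialize Proposition \ref{OrliczNormDiff} to the vector field $f_t=\dot u_t$ along the $\varepsilon$--geodesic and to read off $\nabla_{\dot u_t}\dot u_t$ directly from the defining equation. Taking $f_t=\dot u_t$ in \eqref{CovDerivative} gives
$$\nabla_{\dot u_t}\dot u_t=\ddot u_t-\tfrac12\langle\nabla^{\o_{u_t}}\dot u_t,\nabla^{\o_{u_t}}\dot u_t\rangle_{\o_{u_t}},$$
which is exactly the quantity appearing on the left of the $\varepsilon$--geodesic equation \eqref{BVPEpsGeod}; hence $(\nabla_{\dot u_t}\dot u_t)\,\o_{u_t}^n=\varepsilon\,\o^n$ as volume forms. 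Writing $N_t=\|\dot u_t\|_{\chi,u_t}$ and substituting $f_t=\dot u_t$ into \eqref{OrliczNormDiffEq}, the numerator becomes $\int_X\chi'(\dot u_t/N_t)\,(\nabla_{\dot u_t}\dot u_t)\,\o_{u_t}^n=\varepsilon\int_X\chi'(\dot u_t/N_t)\,\o^n$, and \eqref{EpsGeodDiffEq} follows at once.

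The one hypothesis of Proposition \ref{OrliczNormDiff} that must be checked is that $t\to\dot u_t$ is a non-vanishing vector field, i.e. $\dot u_t\not\equiv0$ for every $t\in[0,1]$; this is precisely what forces the smallness assumptions on $\varepsilon$ and on the distance of the endpoints to $v_0,v_1$. I would in fact prove the stronger uniform statement that $\|\dot u_t\|_{2,u_t}$ is bounded below by a positive constant depending only on $v_0,v_1$, for all $t\in[0,1]$, all $\varepsilon<\varepsilon_0$, and all admissible endpoints. To this end I would use two ingredients. First, since the connection \eqref{CovDerivative} is compatible with the Mabuchi metric, along the $\varepsilon$--geodesic one has $\frac{d}{dt}\|\dot u_t\|_{2,u_t}^2=2\int_X\dot u_t\,(\nabla_{\dot u_t}\dot u_t)\,\o_{u_t}^n=2\varepsilon\int_X\dot u_t\,\o^n$, using \eqref{BVPEpsGeod} again. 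The uniform Laplacian bound \eqref{EpsGeodLaplEst}, together with the fact that the boundary $C^2$--data stay in a fixed neighbourhood of $v_0,v_1$, yields a uniform $C^1$ bound on the $\varepsilon$--geodesics, so that $\big|\frac{d}{dt}\|\dot u_t\|_{2,u_t}^2\big|\le C_0\varepsilon$ with $C_0=C_0(v_0,v_1)$; thus $t\to\|\dot u_t\|_{2,u_t}^2$ varies by at most $C_0\varepsilon$ over $[0,1]$. Second, since the $\varepsilon$--geodesic is a smooth curve joining $u_0,u_1$, its $d_2$--length dominates $d_2(u_0,u_1)$, and shrinking $\varepsilon_0$ so that the endpoints are $d_2$--close to $v_0,v_1$ gives $\int_0^1\|\dot u_t\|_{2,u_t}\,dt\ge\tfrac12 d_2(v_0,v_1)>0$, whence $\|\dot u_{t^\ast}\|_{2,u_{t^\ast}}\ge\tfrac12 d_2(v_0,v_1)$ for some $t^\ast$. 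Combining the two and choosing $\varepsilon_0$ small relative to $d_2(v_0,v_1)^2/C_0$ forces $\|\dot u_t\|_{2,u_t}>0$, hence $\dot u_t\not\equiv0$, for every $t$.

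The main obstacle is exactly this non-degeneracy: without it neither Proposition \ref{OrliczNormDiff} nor even the differentiability of $t\to\|\dot u_t\|_{\chi,u_t}$ is available, and the denominator in \eqref{EpsGeodDiffEq} would degenerate. An alternative route to the same lower bound, which I would keep in reserve, is to invoke the $C^{1,\alpha}(\overline S\times X)$--convergence of $\varepsilon$--geodesics to the weak geodesic joining $v_0,v_1$ (stable under $C^2$--perturbation of the endpoints through \eqref{EpsGeodLaplEst}): that weak geodesic has constant speed $d_2(v_0,v_1)>0$ by \eqref{distgeod}, so $\|\dot u_t\|_{2,u_t}$ stays close to this positive constant uniformly in $t$. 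Once $\dot u_t\not\equiv0$ is secured, the remainder is the two-line substitution described in the first paragraph.
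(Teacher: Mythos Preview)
Your proposal is correct and follows the same approach as the paper. The derivation of \eqref{EpsGeodDiffEq} is identical: specialize Proposition~\ref{OrliczNormDiff} to $f_t=\dot u_t$ and use the $\varepsilon$--geodesic equation to rewrite $(\nabla_{\dot u_t}\dot u_t)\,\o_{u_t}^n=\varepsilon\,\o^n$. The only difference is that for the non-vanishing condition $\dot u_t\not\equiv 0$ the paper simply cites \cite[Lemma~13, Theorem~12]{bl1}, whereas you supply a self-contained $L^2$ argument (bounding $\big|\frac{d}{dt}\|\dot u_t\|_{2,u_t}^2\big|\le C_0\varepsilon$ and using that the curve length dominates $d_2(u_0,u_1)\ge\tfrac12 d_2(v_0,v_1)$); this is essentially the content of the cited lemma, so the two proofs coincide in substance.
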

\begin{proof}From \cite[Lemma 13]{bl1} and the estimates of \cite[Theorem 12]{bl1} it follows that there exists $\varepsilon_0(v_0,v_1) >0$ such that for any $\varepsilon < \varepsilon_0$ and $u_0,u_1 \in \mathcal H$ satisfying $\| u_0 - v_0\|_{C^2(X)}, \| u_1 - v_1\|_{C^2(X)} \leq \varepsilon_0$ the $\varepsilon$--geodesic $(0,1) \ni t \to u_t \in \mathcal H$ connecting $u_0,u_1$ satisfies the non-vanishing condition $\dot u_t \not\equiv 0, \ t \in [0,1]$. Formula \eqref{EpsGeodDiffEq} is now just a consequence of the prior proposition and \eqref{BVPEpsGeod}.
\end{proof}

Continuing to focus on smooth weights $\chi$, we establish a concrete lower bound on the $\chi$--length of tangent vectors along the $\varepsilon$--geodesics defined in \eqref{BVPEpsGeod}. This is the analog of \cite[Lemma 13]{bl1} in our more general setting.

\begin{proposition} \label{EpsGeodTanEst} Suppose $\chi \in \mathcal W^+_p \cap C^\infty(\Bbb R)$ and $v_0,v_1 \in \mathcal H, \ v_0 \neq v_1$. Then there exists $\varepsilon_0(\chi,v_0,v_1) >0$ and $R(\chi,v_0,v_1) >0$ such that for any $\varepsilon < \varepsilon_0$ and $u_0,u_1 \in \mathcal H$ satisfying $\| u_0 - v_0\|_{C^2(X)}, \| u_1 - v_1\|_{C^2(X)} \leq \varepsilon_0$, the $\varepsilon$--geodesic $[0,1] \ni t \to u_t \in \mathcal H$ connecting $u_0,u_1$ satisfies:
\begin{equation}\label{dotintegralest}\int_X \chi( \dot u_t ) \o_{u_t}^n \geq \max \{\int_X \chi(\min(u_1 - u_0,0))\o_{u_0}^n,\int_X \chi(\min(u_0 - u_1,0))\o_{u_1}^n\} -\varepsilon R > 0,
\end{equation}
$t \in [0,1]$. Additionally, there also exists $R_0(\varepsilon_0,\chi, v_0,v_1),R_1(\varepsilon_0,\chi, v_0,v_1)>0$ such that
\begin{itemize}
 \item[(i)]$\| \dot u_t\|_{\chi,u_t} > R_0, \ t \in [0,1]$,
 \item[(ii)] $\Big|\frac{d}{dt}\|\dot u_t\|_{\chi,u_t}\Big| \leq \varepsilon R_1, \ t \in [0,1]$.
\end{itemize}
\end{proposition}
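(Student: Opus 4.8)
The plan is to isolate the functional $\Phi(t):=\int_X\chi(\dot u_t)\,\o_{u_t}^n$ and prove two facts about it along the $\varepsilon$--geodesic $t\mapsto u_t$: it is constant up to an error $O(\varepsilon)$, and at the two endpoints it already dominates the two integrals on the right of \eqref{dotintegralest}. I begin with the endpoint bounds. Since the right side of \eqref{BVPEpsGeod} is $\varepsilon\o^n>0$, we have $\ddot u_t-\tfrac12\langle\nabla\dot u_t,\nabla\dot u_t\rangle=\varepsilon\,\o^n/\o_{u_t}^n>0$, so $\ddot u_t>0$ and $t\mapsto u_t(x)$ is convex for each fixed $x$; hence $\dot u_0\le u_1-u_0\le\dot u_1$ pointwise. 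As $\chi$ is even and increasing on $[0,\infty)$, on $\{u_1<u_0\}$ we get $\dot u_0\le u_1-u_0<0$ and thus $\chi(\dot u_0)\ge\chi(u_1-u_0)$, while on $\{u_1\ge u_0\}$ trivially $\chi(\dot u_0)\ge 0=\chi(\min(u_1-u_0,0))$. Integrating against $\o_{u_0}^n$ gives $\Phi(0)\ge\int_X\chi(\min(u_1-u_0,0))\o_{u_0}^n$, and the symmetric computation at $t=1$ gives $\Phi(1)\ge\int_X\chi(\min(u_0-u_1,0))\o_{u_1}^n$. Both are exact, with no loss in $\varepsilon$.

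The core of the argument is the almost--constancy of $\Phi$. Running the computation in the proof of Proposition \ref{OrliczNormDiff}, but without normalizing by the Orlicz norm, one obtains
$$\frac{d}{dt}\Phi(t)=\int_X \chi'(\dot u_t)\,\nabla_{\dot u_t}\dot u_t\,\o_{u_t}^n,$$
with $\nabla$ the covariant derivative of \eqref{CovDerivative}: the term produced by differentiating $\o_{u_t}^n$ integrates by parts against $\chi'(\dot u_t)$ to supply precisely the gradient term in $\nabla_{\dot u_t}\dot u_t$. The $\varepsilon$--geodesic equation \eqref{BVPEpsGeod} says $\nabla_{\dot u_t}\dot u_t=\varepsilon\,\o^n/\o_{u_t}^n$, so
$$\frac{d}{dt}\Phi(t)=\varepsilon\int_X \chi'(\dot u_t)\,\o^n.$$
This is the same cancellation that produced the factor $\varepsilon$ in \eqref{EpsGeodDiffEq}. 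Invoking the a priori bound $\|\dot u_t\|_{L^\infty([0,1]\times X)}\le L$ — uniform in $\varepsilon<\varepsilon_0$ and in $u_0,u_1$ ranging in a fixed $C^2$--neighborhood of $v_0,v_1$, coming from \eqref{EpsGeodLaplEst} and \cite[Theorem 12]{bl1} — and $\textup{Vol}(X)=1$, I get $|\frac{d}{dt}\Phi(t)|\le\varepsilon\sup_{|l|\le L}|\chi'(l)|=:\varepsilon R$, whence $|\Phi(t)-\Phi(s)|\le\varepsilon R$ for all $s,t\in[0,1]$.

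Combining the two steps yields $\Phi(t)\ge\max\{\Phi(0),\Phi(1)\}-\varepsilon R\ge\max\{A,B\}-\varepsilon R$, which is \eqref{dotintegralest}, where $A,B$ denote the two endpoint integrals. Because $v_0\ne v_1$, one of $A,B$ is strictly positive at $(v_0,v_1)$, and by continuity $\max\{A,B\}\ge c_0>0$ uniformly for $u_0,u_1$ near $v_0,v_1$; shrinking $\varepsilon_0$ so that $\varepsilon_0 R<c_0$ makes the right side positive. Now (i) follows from Proposition \ref{NormIntegralEst}: $\|\dot u_t\|_{\chi,u_t}\ge m_{1/p}(\Phi(t)/\chi(1))\ge m_{1/p}((c_0-\varepsilon_0 R)/\chi(1))=:R_0>0$. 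For (ii) I use \eqref{EpsGeodDiffEq}: its denominator satisfies $\int_X\frac{\dot u_t}{\|\dot u_t\|}\chi'(\frac{\dot u_t}{\|\dot u_t\|})\o_{u_t}^n\ge\int_X\chi(\frac{\dot u_t}{\|\dot u_t\|})\o_{u_t}^n=\chi(1)$ — the inequality being the convexity estimate $l\chi'(l)\ge\chi(l)$ and the equality being \eqref{OrliczNormId} — while its numerator is bounded in absolute value by $\sup_{|l|\le L/R_0}|\chi'(l)|$ using (i) and the $L^\infty$ bound. Dividing gives $|\frac{d}{dt}\|\dot u_t\|_{\chi,u_t}|\le\varepsilon R_1$ with $R_1=\sup_{|l|\le L/R_0}|\chi'(l)|/\chi(1)$.

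The main obstacle is the almost--constancy identity $\frac{d}{dt}\Phi(t)=\varepsilon\int_X\chi'(\dot u_t)\o^n$: one must verify that the gradient term arising from differentiating the volume form cancels all of $\ddot u_t$ except its $\varepsilon$--part, i.e. that the $\tfrac12$ in the geodesic equation exactly matches the constant produced by integration by parts, just as in Proposition \ref{OrliczNormDiff}. The remaining delicate point is purely technical: securing the a priori $L^\infty$ bound on $\dot u_t$ uniformly in $\varepsilon$ and in the boundary data, which is where the non--degeneracy provided by Corollary \ref{EpsGeodDiffCor} and the estimates of \cite{bl1} are needed.
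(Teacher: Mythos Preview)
Your proof is correct and follows essentially the same approach as the paper: endpoint bounds via convexity of $t\mapsto u_t(x)$, the identity $\frac{d}{dt}\Phi(t)=\varepsilon\int_X\chi'(\dot u_t)\,\o^n$ from the $\varepsilon$--geodesic equation, then (i) via Proposition~\ref{NormIntegralEst} and (ii) via \eqref{EpsGeodDiffEq}. The only cosmetic difference is in bounding the denominator in (ii): the paper rewrites it using the Young identity as $\chi(1)+\int_X\chi^*(\chi'(\cdot))\,\o_{u_t}^n\geq\chi(1)$, whereas you use the equivalent convexity inequality $l\chi'(l)\geq\chi(l)$ directly to reach the same bound.
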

\begin{proof}
Suppose $u_0,u_1 \in \mathcal H$. As $t \to u_t(x)$ is convex for any $x \in X$, on the set $\{ u_0 \geq u_1 \}$ the estimate $\dot u_0 \leq u_1 - u_0\leq 0$ holds, hence
$$\int_X \chi( \dot u_0 ) \o_{u_0}^n \geq \int_X \chi(\min(u_1 - u_0,0))\o_{u_0}^n.$$
We can similarly deduce that
$$\int_X \chi( \dot u_1 ) \o_{u_1}^n \geq \int_X \chi(\min(u_1 - u_0,0))\o_{u_1}^n.$$
For $t \in [0,1]$, using the fact that $\nabla$ is a Riemannian connection and $t \to u_t$ is an $\varepsilon$--geodesic, we can write:
$$\label{etangentest}
\Big|\frac{d}{dt} \int_X \chi(\dot u_t) \o_{u_t}^n\Big|=\Big|\int_X \chi'(\dot u_t) \nabla_{\dot u_t} \dot u_t\o_{u_t}^n\Big|= \varepsilon \Big|\int_X \chi'(\dot u_t) \o^n\Big| \leq \varepsilon R(\chi, u_0,u_1),$$
where in the last estimate we have used that $\dot u_t$ is uniformly bounded in terms of  $\| u_0\|_{C^2}, \| u_1\|_{C^2}$. Putting together the last three estimates it is clear that for some small $\varepsilon_0(\chi,v_0,v_1)$ the estimate of \eqref{dotintegralest} holds. Using Proposition \ref{NormIntegralEst}, (i) follows from \eqref{dotintegralest}.

To establish (ii) we shirnk $\varepsilon_0$ further to satisfy the requirements of Corollary \eqref{EpsGeodDiffCor}. Using the Young identity \eqref{YoungIdIneq} we can write:
\begin{flalign}\label{epstangentest}
\Big|\frac{d}{dt}\|\dot u_t\|_{\chi,u_t}\Big|&=\varepsilon\frac{\Big|\int_X \chi'\Big(\frac{\dot u_t}{\|\dot u_t\|_{\chi,u_t}}\Big)\o^n\Big|} {\int_X \frac{\dot u_t}{\|\dot u_t\|_{\chi,u_t}} \chi'\Big(\frac{\dot u_t}{\|\dot u_t\|_{\chi,u_t}}\Big) \omega_{u_t}^n}=\varepsilon\frac{\Big|\int_X \chi'\Big(\frac{\dot u_t}{\|\dot u_t\|_{\chi,u_t}}\Big)\o^n\Big|} {\chi(1)+\int_X \chi^*\Big(\chi'\Big(\frac{\dot u_t}{\|\dot u_t\|_{\chi,u_t}}\Big)\Big) \omega_{u_t}^n}\leq \nonumber \\
&\leq \frac{\varepsilon}{\chi(1)}\int_X \chi'\Big(\frac{\dot u_t}{\|\dot u_t\|_{\chi,u_t}}\Big)\o^n.
\end{flalign}
Using (i) and the fact that $\dot u_t$ is uniformly bounded in terms of  $\| u_0\|_{C^2}, \| u_1\|_{C^2}$ the estimate of (ii) follows.
\end{proof}

With the help of  \eqref{EpsGeodDiffEq} we can establish an estimate for $\varepsilon$--geodesics which is the analog of \cite[Theorem 14]{bl1} in our more general setting.
\begin{proposition} Suppose $\chi \in \mathcal W^+_p \cap C^\infty(\Bbb R)$, $[0,1] \ni s \to \psi_s \in \mathcal H$ is a smooth curve, $\phi \in \mathcal H \setminus \psi([0,1])$ and  $\varepsilon >0$.  We denote by $u \in C^\infty([0,1]\times [0,1] \times X)$ the smooth function for which $[0,1] \ni t \to u(t,s,\cdot) \in \mathcal H$ is the $\varepsilon$--geodesic connecting $\phi$ and $\psi_s, \ s \in [0,1]$. There exists $\varepsilon_0(\psi,\phi) >0$ such that for any $\varepsilon \leq \varepsilon_0$ the following holds:
$$l_\chi(u(\cdot,0)) \leq l_\chi(\psi) + l_\chi(u(\cdot,1)) + \varepsilon R,$$
for some $R(\phi,\psi,\chi,\varepsilon_0) >0$ independent of $\varepsilon >0$.
\end{proposition}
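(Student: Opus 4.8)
The plan is to read the claimed estimate as an approximate triangle inequality, obtained by controlling the $s$--derivative of the length function $L(s):=l_\chi(u(\cdot,s))$. Since $u(0,s)=\phi$ and $u(1,s)=\psi_s$ for all $s$, once I establish the pointwise lower bound $L'(s)\geq -\|\dot\psi_s\|_{\chi,\psi_s}-C\varepsilon$ with $C$ independent of $\varepsilon$, integrating over $s\in[0,1]$ gives, via \eqref{curve_length_def},
$$L(0)-L(1)=-\int_0^1 L'(s)\,ds\leq \int_0^1\|\dot\psi_s\|_{\chi,\psi_s}\,ds+C\varepsilon=l_\chi(\psi)+C\varepsilon,$$
which is exactly the assertion with $R=C$. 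So everything reduces to a first--variation estimate for $L$.

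To compute $L'(s)$, abbreviate $w=\partial_t u$, $g=\partial_s u$, $N=\|w\|_{\chi,u}$ and $a=w/N$. Applying Proposition \ref{OrliczNormDiff} in the $s$--variable (the curve being $s\mapsto u(t,s)$, the vector field $s\mapsto w(t,s)$, which is non--vanishing by Proposition \ref{EpsGeodTanEst}) and using that the connection \eqref{CovDerivative} is torsion free, so that $\nabla_{g}w=\nabla_{w}g$, differentiation under the $t$--integral yields
\[
L'(s)=\int_0^1\frac{1}{D}\int_X\chi'(a)\,\nabla_{w}g\;\o_u^n\,dt,\qquad D:=\int_X\chi'(a)\,a\;\o_u^n .
\]
The heart of the matter is an integration by parts in $t$. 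Because \eqref{CovDerivative} is metric compatible, $\frac{d}{dt}\int_X\chi'(a)g\,\o_u^n=\int_X(\nabla_w\chi'(a))g\,\o_u^n+\int_X\chi'(a)\nabla_wg\,\o_u^n$, so the integrand is a total $t$--derivative up to the term $\int_X(\nabla_w\chi'(a))g\,\o_u^n$. Here the chain rule gives $\nabla_w\chi'(a)=\chi''(a)\nabla_wa$ with $\nabla_wa=\frac1N\nabla_ww-\frac{a}{N}\partial_tN$; by the $\varepsilon$--geodesic equation \eqref{BVPEpsGeod} one has $\nabla_ww=\varepsilon\,\o^n/\o_u^n=O(\varepsilon)$, and $\partial_tN=O(\varepsilon)$ by Proposition \ref{EpsGeodTanEst}(ii), so this term is $O(\varepsilon)$. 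Integrating the remaining total derivative against $1/D$ and integrating by parts once more produces the boundary contribution $\big[B(t)/D(t)\big]_0^1$ with $B(t)=\int_X\chi'(a)g\,\o_u^n$, plus the error $-\int_0^1 B\,\partial_t(1/D)\,dt$; the same computation shows $\partial_t D=O(\varepsilon)$ (differentiate $D=\chi(1)+\int_X\chi^*(\chi'(a))\,\o_u^n$, the identity following from \eqref{YoungIdIneq}), so this error is again $O(\varepsilon)$.

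It remains to read off the boundary terms. At $t=0$ the endpoint is fixed, $g(0,s)=\partial_s\phi=0$, hence $B(0)=0$. At $t=1$ we have $g(1,s)=\dot\psi_s$ and $\o_u^n=\o_{\psi_s}^n$, and I claim $|B(1)/D(1)|\leq\|\dot\psi_s\|_{\chi,\psi_s}$. Writing $M=\|\dot\psi_s\|_{\chi,\psi_s}$ (with $M=0$ forcing $B(1)=0$), Young's inequality \eqref{YoungIdIneq} applied pointwise to $\pm\dot\psi_s/M$ and $\chi'(a)$ gives $\pm\frac1M\dot\psi_s\chi'(a)\leq\chi(\dot\psi_s/M)+\chi^*(\chi'(a))$; integrating against $\o_{\psi_s}^n$ and using $\int_X\chi(\dot\psi_s/M)\,\o_{\psi_s}^n=\chi(1)$ from \eqref{OrliczNormId} together with $D(1)=\chi(1)+\int_X\chi^*(\chi'(a))\,\o_{\psi_s}^n$ yields $|B(1)|\leq M\,D(1)$. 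Thus $L'(s)=B(1)/D(1)+O(\varepsilon)\geq -\|\dot\psi_s\|_{\chi,\psi_s}-C\varepsilon$, which is the bound needed above. For uniformity: since $\phi\notin\psi([0,1])$ and $\psi([0,1])$ is compact, a covering argument based on Corollary \ref{EpsGeodDiffCor} and Proposition \ref{EpsGeodTanEst} provides a single $\varepsilon_0(\phi,\psi)>0$ and uniform constants along the whole family (a lower bound $N\geq R_0>0$, uniform $C^2$ bounds on $u$ over $[0,1]^2$, and a uniform positive lower bound on $\o_u^n$), which is exactly what makes every $O(\varepsilon)$ uniform in $(t,s)$ and bounds $B$ and $1/D$. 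I expect the main obstacle to be precisely this error analysis in the integration by parts, namely controlling the two parasitic terms $\int_X(\nabla_w\chi'(a))g\,\o_u^n$ and $\int_0^1 B\,\partial_t(1/D)\,dt$, both of which rest on the two facts that the covariant acceleration $\nabla_ww$ is $O(\varepsilon)$ and that the speed $N$ is $t$--almost--constant ($|\partial_tN|\leq\varepsilon R_1$); by contrast, the boundary estimate by $\|\dot\psi_s\|_{\chi,\psi_s}$ with constant exactly $1$ is a clean consequence of the Young inequality.
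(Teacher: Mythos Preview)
Your proof is correct and follows essentially the same route as the paper's: differentiate $L(s)$ via Proposition \ref{OrliczNormDiff}, use torsion-freeness to swap $\nabla_g w=\nabla_w g$, integrate by parts in $t$, control the two parasitic terms by the $\varepsilon$--geodesic equation together with Proposition \ref{EpsGeodTanEst}(ii), and bound the $t=1$ boundary contribution by $\|\dot\psi_s\|_{\chi,\psi_s}$ via Young's inequality. One small correction: a uniform-in-$\varepsilon$ positive lower bound on $\o_u^n$ is neither available (it degenerates as $\varepsilon\to 0$) nor needed---the point is that $(\nabla_w w)\,\o_u^n=\varepsilon\,\o^n$ as measures, so the error integrals are directly $O(\varepsilon)$ without ever dividing by $\o_u^n$.
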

\begin{proof}Fix $s \in [0,1]$. To avoid cumbersome notation, derivatives in the $t$--direction will be donoted by dots, derivatives in the $s$--direction will be denoted by $d/ds$ and sometimes we omit dependence on $(t,s)$.  By Corollary \ref{EpsGeodDiffCor} for $\varepsilon_0(\psi,\phi)>0$ small enough we can write:
\begin{flalign*}
\frac{d}{ds}l_\chi(u(\cdot,s))&= \int_0^1 \frac{d}{ds} \|\dot u(t,s))\|_{\chi,u(t,s)}dt = \int_0^1 \frac{\int_X \chi' \Big(\frac{\dot u}{\|\dot u\|_{\chi,u}}\Big)\nabla_{\frac{du}{ds}}\dot u \omega_{u}^n}{\int_X \chi'\Big(\frac{\dot u}{\|\dot u\|_{\chi,u}}\Big) \frac{\dot u}{\|\dot u\|_{\chi,u}}  \omega_{u}^n}dt
\end{flalign*}
Using the Young identity \eqref{YoungIdIneq} and the fact that $\nabla$ is a Riemannian connection, we can continue:
\begin{flalign}\label{anothercalc}
&=\int_0^1 \frac{\int_X \chi' \Big(\frac{\dot u}{\|\dot u\|_{\chi,u}}\Big)\nabla_{\frac{du}{ds}}\dot u \omega_{u}^n}{\chi(1) + \int_X \chi^*\Big(\chi'\Big(\frac{\dot u}{\|\dot u\|_{\chi,u}}\Big)\Big)\omega_{u}^n}dt \nonumber\\
&=\int_0^1 \frac{\int_X \chi' \Big(\frac{\dot u}{\|\dot u\|_{\chi,u}}\Big)\nabla_{\dot u}\frac{du}{ds} \omega_{u}^n}{\chi(1) + \int_X \chi^*\Big(\chi'\Big(\frac{\dot u}{\|\dot u\|_{\chi,u}}\Big)\Big)\omega_{u}^n}dt\nonumber\\
&=\int_0^1 \frac{\frac{d}{dt}\int_X \chi' \Big(\frac{\dot u}{\|\dot u\|_{\chi,u}}\Big)\frac{du}{ds}\omega_{u}^n -\int_X \frac{du}{ds} \nabla_{\dot u}\Big(\chi' \Big(\frac{\dot u}{\|\dot u\|_{\chi,u}}\Big)\Big) \omega_{u}^n}{\chi(1) + \int_X \chi^*\Big(\chi'\Big(\frac{\dot u}{\|\dot u\|_{\chi,u}}\Big)\Big)\omega_{u}^n}dt.
\end{flalign}
We make the following side computation:
\begin{equation}\label{interimnabla}
\nabla_{\dot u}\Big(\chi' \Big(\frac{\dot u}{\|\dot u\|_{\chi,u}}\Big)\Big)\o_{u}^n=\chi'' \Big(\frac{\dot u}{\|\dot u\|_{\chi,u}}\Big)\Big( \frac{\nabla_{\dot u}\dot u}{\|\dot u\|_{\chi,u}} - \frac{1}{\|\dot u\|_{\chi,u}^2}\frac{d}{dt}\|\dot u\|_{\chi,u}\Big)\o_u^n
\end{equation}
After possibly further shrinking $\varepsilon_0(\phi,\psi) >0$, from Proposition \ref{EpsGeodTanEst}(i)(ii) and \eqref{BVPEpsGeod} it follows that $\|\dot u\|_{\chi,u}$ is uniformly bounded away from zero and both $\nabla_{\dot u} \dot u\o_u^n$ and $\frac{d}{dt} \|\dot u \|_{\chi,u}$ are of the form $\varepsilon R$, where $R$ is an uniformly bounded quantity for $\varepsilon < \varepsilon_0(\phi,\psi)$. Furthermore, it follows from Chen's arguments that $\dot u$ and $du/ds$ are uniformly bounded independently of $\varepsilon$ (see again \cite[Theorem 12]{bl1}). All of this implies that the quantity of \eqref{interimnabla} is also of the form $\varepsilon R$. Building on this, the second term in the numerator of \eqref{anothercalc} can be estimated and we can continue to write:
\begin{flalign*}
&= \int_0^1 \frac{\frac{d}{dt}\int_X \chi' \Big(\frac{\dot u}{\|\dot u\|_{\chi,u}}\Big)\frac{du}{ds} \o_{u}^n}{\chi(1) + \int_X \chi^*\Big(\chi'\Big(\frac{\dot u}{\|\dot u\|_{\chi,u}}\Big)\Big)\o_{u}^n}dt +\varepsilon R
\end{flalign*}
As $\chi^*$ is the Legendre transform of $\chi$, it follows that ${\chi^*}' (\chi'(l))=l, \ l \in \Bbb R$. Using this, our prior observations and the chain rule, we obtain that the expression
$$\frac{d}{dt}\left( \chi(1)+\int_X \chi^*\Big(\chi'\Big(\frac{\dot u}{\|\dot u\|_{\chi,u}}\Big)\Big)\o_{u}^n\right)=\int_X \frac{\dot u}{\|\dot u\|_{\chi,u}} \chi''\Big(\frac{\dot u}{\|\dot u\|_{\chi,u}}\Big)\nabla_{\dot u}\Big(\frac{\dot u}{\|\dot u\|_{\chi,u}}\Big)\o_{u}^n$$
is again of magnitude $\varepsilon R$, hence in our sequence of calculations we can write
\begin{flalign}\label{lastestimate}
&= \int_0^1 \frac{d}{dt} \frac{\int_X \chi' \Big(\frac{\dot u}{\|\dot u\|_{\chi,u}}\Big)\frac{du}{ds} \omega_{u}^n}{\chi(1) + \int_X \chi^*\Big(\chi'\Big(\frac{\dot u}{\|\dot u\|_{\chi,u}}\Big)\Big)\omega_{u}^n}dt +\varepsilon R \nonumber\\
&=\frac{\int_X \chi' \Big(\frac{\dot u(1,s)}{\|\dot u(1,s)\|_{\chi,\psi}}\Big)\frac{d \psi(s)}{ds} \omega_{\psi}^n}{\chi(1) + \int_X \chi^*\Big(\chi'\Big(\frac{\dot u(1,s)}{\|\dot u(1,s)\|_{\chi,\psi}}\Big)\Big)\omega_{\psi}^n} + \varepsilon R \nonumber\\
&\geq -\Big\|\frac{d\psi(s)}{ds}\Big\|_{\chi, \psi} + \varepsilon R,
\end{flalign}
where in the last line we have used the Young inequality \eqref{YoungIdIneq} in the following manner:
\begin{flalign*}
\frac{{\int_X \chi' \Big(\frac{\dot u(1,s)}{\|\dot u(1,s)\|_{\chi,\psi}}\Big)\frac{d \psi(s)}{ds} \omega_{\psi}^n}}{\| {d \psi}/{ds}\|_{\chi,\psi}}&\geq- \int_X \Big[ \chi \Big( \frac{{d \psi}/{ds}}{\| {d \psi}/{ds}\|_{\chi,\psi}}\Big) + \chi^*\Big(\chi' \Big(\frac{\dot u(1,s)}{\|\dot u(1,s)\|_{\chi,\psi}}\Big)\Big) \Big]\omega_{\psi}^n\\
&=-\Big[\chi(1) + \int_X \chi^*\Big(\chi' \Big(\frac{\dot u(1,s)}{\|\dot u(1,s)\|_{\chi,\psi}}\Big)\Big) \omega_{\psi}^n\Big].
\end{flalign*}
Integrating estimate \eqref{lastestimate} with respect to $s$ yields the desired inequality.
\end{proof}
With all the ingredients of the proof in place we can establish the main result of this section:
\begin{theorem} \label{ChiChenTheorem}Suppose $\chi\in \mathcal W^+_p$ and $[0,1] \ni t \to u_t \in \mathcal H_\Delta$ is  the weak geodesic of \eqref{BVPEpsGeod} joining $u_0,u_1 \in \mathcal H$. Then we have
\begin{equation}\label{ChiDistGeodFormula}
d_\chi(u_0,u_1) = l_\chi(u)= \|\dot u_t\|_{\chi,u_t}, \ t \in [0,1].
\end{equation}
Consequently, $(\mathcal H,d_\chi)$ is a metric space.
\end{theorem}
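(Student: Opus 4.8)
The plan is to establish the two inequalities $d_\chi(u_0,u_1) \le \|\dot u_t\|_{\chi,u_t}$ and $d_\chi(u_0,u_1) \ge \|\dot u_t\|_{\chi,u_t}$ separately; the middle equality $l_\chi(u) = \|\dot u_t\|_{\chi,u_t}$ is then immediate, since $l_\chi(u)$ is interpreted as $\int_0^1 \|\dot u_t\|_{\chi,u_t}\,dt$ and Remark \ref{chilengthgeodremark} shows the $\chi$--speed of the weak geodesic is constant in $t$, collapsing this integral. I would first treat the smooth case $\chi \in \mathcal W^+_p \cap C^\infty(\mathbb{R})$ and only afterwards remove the smoothness hypothesis by approximation. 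For the upper bound I would use the $\varepsilon$--geodesics $u^\varepsilon$ of \eqref{BVPEpsGeod} as competitors: each is a genuine smooth curve in $\mathcal H$ joining $u_0$ to $u_1$, so $d_\chi(u_0,u_1) \le l_\chi(u^\varepsilon)$. By Proposition \ref{EpsGeodTanEst}(ii) the speed $\|\dot u^\varepsilon_t\|_{\chi,u^\varepsilon_t}$ differs from a constant by at most $\varepsilon R_1$, so $l_\chi(u^\varepsilon)$ differs from $\|\dot u^\varepsilon_0\|_{\chi,u^\varepsilon_0}$ by at most $\varepsilon R_1$. Since $u^\varepsilon \to u$ in $C^{1,\alpha}$ (so $\dot u^\varepsilon_t \to \dot u_t$ uniformly and $\o_{u^\varepsilon_t}^n \to \o_{u_t}^n$ weakly), the defining integrals in \eqref{OrliczNormId} converge, hence $\|\dot u^\varepsilon_t\|_{\chi,u^\varepsilon_t} \to \|\dot u_t\|_{\chi,u_t}$; letting $\varepsilon \to 0$ gives the upper bound.

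The lower bound is the crux, and is where I expect the main obstacle. Here I would invoke the comparison estimate of the preceding proposition. Given an arbitrary smooth competitor $\alpha$ joining $u_0$ and $u_1$, I would apply that proposition with $\psi$ equal to $\alpha$ traversed backwards (so $\psi_0 = u_1$, $\psi_1 = u_0$) and base point $\phi = u_0$; then $u(\cdot,0)$ is the $\varepsilon$--geodesic from $u_0$ to $u_1$, $u(\cdot,1)$ is the trivial $\varepsilon$--geodesic from $u_0$ to itself, and the estimate reads $l_\chi(u^\varepsilon) \le l_\chi(\alpha) + \varepsilon R$. The technical wrinkle is that the proposition requires $\phi \notin \psi([0,1])$, which fails because $\alpha$ starts at $u_0$; I would resolve this in the standard way by restricting $\psi$ to $[0,1-\delta]$ and absorbing the short terminal piece (or by perturbing $\phi$ slightly off the curve), then letting $\delta \to 0$. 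Letting $\varepsilon \to 0$ and using the $C^{1,\alpha}$ convergence as above yields $\|\dot u_t\|_{\chi,u_t} \le l_\chi(\alpha)$, and taking the infimum over all competitors $\alpha$ gives $d_\chi(u_0,u_1) \ge \|\dot u_t\|_{\chi,u_t}$.

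To remove the smoothness hypothesis I would pass to a limit in a family of smooth weights. By Proposition \ref{approx_lemma2} choose $\chi_k \in \mathcal W^+_{p_k} \cap C^\infty(\mathbb{R})$ with $\chi_k \to \chi$ uniformly on compacts, so the smooth case already gives $d_{\chi_k}(u_0,u_1) = \|\dot u_t\|_{\chi_k,u_t}$. Proposition \ref{approx_lemma} then yields $\|\dot u_t\|_{\chi_k,u_t} \to \|\dot u_t\|_{\chi,u_t}$, and, applied pointwise in $t$ with dominated convergence, $l_{\chi_k}(\alpha) \to l_\chi(\alpha)$ for each fixed smooth $\alpha$. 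The upper bound argument above already applies verbatim to the nonsmooth $\chi$, as it uses only continuity of the Orlicz norm under $C^{1,\alpha}$ convergence; for the lower bound I would write $l_\chi(\alpha) = \lim_k l_{\chi_k}(\alpha) \ge \lim_k d_{\chi_k}(u_0,u_1) = \lim_k \|\dot u_t\|_{\chi_k,u_t} = \|\dot u_t\|_{\chi,u_t}$, and infimize over $\alpha$ to conclude $d_\chi(u_0,u_1) \ge \|\dot u_t\|_{\chi,u_t}$ in general.

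Finally, for the metric space claim it suffices to verify nondegeneracy, since $d_\chi$ is manifestly a symmetric pseudometric obeying the triangle inequality as an infimum of curve lengths. If $d_\chi(u_0,u_1) = 0$, then $\|\dot u_t\|_{\chi,u_t} = 0$, so $\dot u_t = 0$ almost everywhere with respect to $\o_{u_t}^n$; in particular $\|\dot u_t\|_{2,u_t} = 0$, whence $d_2(u_0,u_1) = 0$ by \eqref{distgeod}, and therefore $u_0 = u_1$ by Chen's theorem that $(\mathcal H,d_2)$ is a metric space.
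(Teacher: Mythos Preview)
Your proposal is correct and follows essentially the same route as the paper: the upper bound via $\varepsilon$--geodesics as competitors and $C^{1,\alpha}$ convergence, the lower bound via the triangle-type comparison proposition (your choice of base point $u_0$ versus the paper's $u_1$ is cosmetic, and your $\delta$--restriction is exactly how the paper handles the exclusion hypothesis, taking $h\in[0,1)$ and letting $h\to 1$), and the passage from smooth to general $\chi$ via Propositions \ref{approx_lemma2} and \ref{approx_lemma}. Your nondegeneracy argument, reducing to Chen's $d_2$ result through $\|\dot u_t\|_{2,u_t}=0$, is a minor variant of the paper's one-line observation that $u_0\neq u_1$ forces $\dot u_0\not\equiv 0$; both ultimately rest on the same fact.
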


\begin{proof} As the smooth $\varepsilon$--geodesics $u^\varepsilon$ connecting $u_0,u_1$ converge to $u$ in $C^{1,\alpha}(\overline S \times X)$ we have
$$\lim_{\varepsilon \to 0} l_\chi(u^{\varepsilon}) = l_\chi(u),$$
hence $d_\chi(u_0,u_1) \leq l_\chi(u)$.

For the other inequality, we assume first that $\chi \in \mathcal W^+_p \cap C^\infty$. We have to prove that
\begin{equation}\label{reverseineq}
l_\chi(\phi) \geq l_\chi(u)
\end{equation}
for all smooth curves $[0,1] \ni t \to \phi_t \in \mathcal H$ connecting $u_0,u_1$. We can assume that $u_1 \not \in \phi[0,1)$ and let $h \in [0,1)$. Letting $\varepsilon \to 0$ in the previous result we obtain that
$$l_\chi(v) \leq l_\chi(\phi|_{[0,h]}) + l_\chi(w^h),$$
where $(0,1) \ni t \to v_t,w^h_t \in \mathcal H_\Delta$ are the weak geodesic segments joining $u_1,u_0$ and $u_1,\phi_h$ respectively. As $h \to 1$ we have $l_\chi(w^h)\to 0$ and we obtain \eqref{reverseineq}.
For general $\chi \in \mathcal W^+_p$ by Proposition \ref{approx_lemma2} there exists a sequence $\chi_k \in \mathcal W^+_{p_k} \cap C^\infty(\Bbb R)$ such that $\chi_k$ converges to $\chi$ uniformly on compacts. From what we just proved it follows that
$$\int_0^1 \| \dot \phi_t\|_{\chi_k,\phi_t}dt=l_{\chi_k}(\phi) \geq l_{\chi_k}(u)=\int_0^1 \| \dot u_t\|_{\chi_k,u_t}dt.$$
Using Proposition \ref{approx_lemma} and the dominated convergence theorem ($\dot \phi_t, \dot u_t$ are uniformly bounded), we can take the limit in this last estimate to conclude \eqref{reverseineq}.
Formula \eqref{ChiDistGeodFormula} follows now from Remark \ref{chilengthgeodremark}.

Finally, if $u_0\neq u_1$ then $\dot u_0 \not\equiv 0$, hence $d_\chi(u_0,u_1)=\| \dot u_0\|_{\chi,u_0}>0$. This implies that $(\mathcal H, d_\chi)$ is a metric space.
\end{proof}

\section{The Metric Spaces $(\mathcal E_{\tilde \chi}(X,\omega), d_\chi)$}

For the entire section we fix $\chi \in \mathcal W^+_p$. Our first result is the analog of \cite[Proposition 2.16]{g}in our more general setting:

\begin{lemma} \label{Mdist_est}Suppose $u_0,u_1 \in \mathcal H$ with $u_0 \leq u_1$. We have:
$$\max\{2^{-n-2} \|u_1-u_0\|_{\chi,u_0}, \|u_1-u_0\|_{\chi,u_1}\} \leq d_\chi(u_0,u_1) \leq \|u_1-u_0\|_{\chi,u_0}.$$
\end{lemma}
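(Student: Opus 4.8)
The plan is to route everything through the weak geodesic $[0,1]\ni t\to u_t\in\mathcal H_\Delta$ joining $u_0,u_1$, using Theorem \ref{ChiChenTheorem} together with Remark \ref{chilengthgeodremark}, which give $d_\chi(u_0,u_1)=\|\dot u_0\|_{\chi,u_0}=\|\dot u_1\|_{\chi,u_1}$. The first step is to record the pointwise chain $0\le\dot u_0\le u_1-u_0\le\dot u_1$. The bound $\dot u_0\ge0$ comes from \eqref{SublevelSetId} with $\tau=0$: since $u_0\le u_1$ one has $P(u_0,u_1)=u_0$, whence $\{\dot u_0\ge0\}=\{P(u_0,u_1)=u_0\}=X$. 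The remaining two inequalities are immediate from convexity of $t\mapsto u_t(x)$, comparing the one-sided derivatives at $t=0,1$ with the secant slope $u_1(x)-u_0(x)$.

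Two of the three estimates then follow directly from monotonicity of the Orlicz gauge norm on nonnegative functions, which holds because $\chi$ is even and increasing on $[0,\infty)$. Since $0\le\dot u_0\le u_1-u_0$, I obtain $d_\chi(u_0,u_1)=\|\dot u_0\|_{\chi,u_0}\le\|u_1-u_0\|_{\chi,u_0}$, the upper bound. Since $\dot u_1\ge u_1-u_0\ge0$, I obtain $d_\chi(u_0,u_1)=\|\dot u_1\|_{\chi,u_1}\ge\|u_1-u_0\|_{\chi,u_1}$, the first lower bound.

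The substance is the reverse bound $2^{-n-2}\|u_1-u_0\|_{\chi,u_0}\le\|\dot u_0\|_{\chi,u_0}$, which compares two gauge norms against the \emph{same} measure $\o_{u_0}^n$ for the functions $\psi:=\dot u_0\le g:=u_1-u_0$. The engine is the measure comparison
\begin{equation*}
\o_{u_0}^n(\{g>2s\})\le 2^n\,\o_{u_0}^n(\{\psi\ge s\}),\qquad s>0. \tag{$\star$}
\end{equation*}
To prove $(\star)$ I would introduce the midpoint $\phi:=\tfrac12(u_0+u_1)-s\in\textup{PSH}(X,\o)$ and the envelope $P:=P(u_0,u_1-s)$, recalling from \eqref{SublevelSetId} that $\{P=u_0\}=\{\psi\ge s\}$. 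On $\{g>2s\}$ one checks $\phi>u_0\ge P$, so $\{g>2s\}\subset\{\phi>P\}$; moreover on $\{\phi>P\}$ the alternative $P=u_1-s$ is impossible, as it would force $u_0>u_1$, so by the partition formula \eqref{MA_formula} the measure $\o_P^n$ restricted to $\{\phi>P\}$ is carried by $\{P=u_0\}$ and agrees with $\o_{u_0}^n$ there. Chaining the pointwise form inequality $\o_\phi^n\ge 2^{-n}\o_{u_0}^n$ with the comparison principle $\int_{\{\phi>P\}}\o_\phi^n\le\int_{\{\phi>P\}}\o_P^n$ then yields $(\star)$.

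Finally I would convert $(\star)$ into the norm estimate through the layer-cake formula $\int_X\chi(f)\,\o_{u_0}^n=\int_0^\infty\chi'(s)\,\o_{u_0}^n(\{f>s\})\,ds$ valid for $f\ge0$. Writing $N:=\|\psi\|_{\chi,u_0}$ and $\lambda:=2^{n+2}N$, a change of variables turns $(\star)$ into $\int_X\chi(g/\lambda)\,\o_{u_0}^n\le 2^n\int_X\chi(2\psi/\lambda)\,\o_{u_0}^n$; applying the growth control \eqref{GrowthControl} with $\varepsilon=2^{-n-1}$ together with the identity \eqref{OrliczNormId} bounds the right-hand side by $2^n\cdot 2^{-n-1}\chi(1)=\tfrac12\chi(1)$, so $\|g\|_{\chi,u_0}\le\lambda$, which is exactly $2^{-n-2}\|u_1-u_0\|_{\chi,u_0}\le\|\dot u_0\|_{\chi,u_0}$. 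The main obstacle is $(\star)$, specifically pinning down that on $\{\phi>P\}$ the envelope's Monge--Amp\`ere mass sits entirely on the contact set $\{P=u_0\}$, so that the dimensional factor $2^n$ coming from $\o_\phi^n\ge2^{-n}\o_{u_0}^n$ is the only loss. The degenerate case $u_0=u_1$, where all quantities vanish, should be disposed of at the outset so that $N>0$ and \eqref{OrliczNormId} indeed applies.
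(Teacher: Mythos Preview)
Your argument is correct. The opening steps---the chain $0\le\dot u_0\le u_1-u_0\le\dot u_1$ and the immediate deduction of the upper bound and of $\|u_1-u_0\|_{\chi,u_1}\le d_\chi(u_0,u_1)$---coincide with the paper's. For the remaining lower bound $2^{-n-2}\|u_1-u_0\|_{\chi,u_0}\le d_\chi(u_0,u_1)$, however, you take a genuinely different route. The paper switches measures: it uses $\o_{u_0}^n\le 2^n\o_{(u_0+u_1)/2}^n$ to pass from $\|u_1-u_0\|_{\chi,u_0}$ to $\|(u_0+u_1)/2-u_0\|_{\chi,(u_0+u_1)/2}$, invokes the already-proved bound $\|v-w\|_{\chi,v}\le d_\chi(v,w)$ for the ordered pair $w=u_0\le v=(u_0+u_1)/2$, and closes with the triangle inequality together with Lemma~\ref{NaiveCompare}. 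In contrast you stay with the single measure $\o_{u_0}^n$ and prove the level-set estimate $(\star)$ directly via the envelope $P(u_0,u_1-s)$, the partition formula \eqref{MA_formula}, and the comparison principle, then convert to norms by layer-cake and convexity of $\chi$. Your approach is self-contained (it does not need the auxiliary comparison Lemma~\ref{NaiveCompare}) and yields the slightly sharper intermediate fact that $\dot u_0$ controls $u_1-u_0$ in distribution with respect to $\o_{u_0}^n$; the paper's approach is shorter and avoids invoking the comparison principle and the full partition formula, relying instead only on elementary metric manipulations.
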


\begin{proof} Suppose $(0,1) \ni t \to u_t \in \mathcal H_\Delta$ is the weak geodesic segment joining $u_0$ and $u_1$. By (\ref{distgeod}) we have
$$d_\chi(u_0,u_1)=\|\dot u_1\|_{\chi,u_1}=\|\dot u_0\|_{\chi,u_0}.$$ Since $u_0 \leq u_1$, we have that $u_0 \leq u_t$. Since $(t,x) \to u_t(x)$ is convex in the $t-$variable, it results that $0 \leq \dot u_0 \leq u_1-u_u \leq \dot u_1$ and
\begin{equation}\label{NaiveDistEst}
\|u_1-u_0\|_{\chi,u_1} \leq d_\chi(u_0,u_1) \leq \|u_1-u_0\|_{\chi,u_0}
\end{equation}
follows.

We introduce $N := \|u_1 - u_0\|_{\chi,u_0}$. Using $ \o_{u_0}^n \leq 2^{n}\o_{(u_0+u_1)/2}^n$ and the convexity of $\chi$ we have
\begin{flalign*}
\chi(1)=&\int_X \chi\Big( \frac{u_1 - u_0}{N}\Big) \o_{u_0}^n\leq \int_X \chi\Big( \frac{u_1 - (u_0 + u_1)/2}{N/2}\Big) 2^{n}\o_{(u_0+u_1)/2}^n\\
\leq &\int_X \chi\Big( \frac{u_1 - (u_0 + u_1)/2}{N/2^{n+1}}\Big) \o_{(u_0+u_1)/2}^n.
\end{flalign*}
From the definition of the Orlicz norm it follows that
$$\frac{\|u_1 - u_0\|_{\chi,u_0}}{2^{n+1}} \leq \Big\|u_1 - \frac{u_0 + u_1}{2}\Big\|_{\chi,\frac{u_0 + u_1}{2}}=\Big\|u_0 - \frac{u_0 + u_1}{2}\Big\|_{\chi,\frac{u_0 + u_1}{2}}.$$
The first estimate of \eqref{NaiveDistEst} allows us to continue and obtain:
$$\frac{\|u_1 - u_0\|_{\chi,u_0}}{2^{n+1}} \leq d_\chi\Big(\frac{u_0 + u_1}{2},u_0\Big).$$
But we have $d_\chi((u_0 + u_1)/2,u_0) \leq d_\chi(u_0,u_1) + d_\chi((u_0 + u_1)/2,u_1)$, and $d_\chi((u_0 + u_1)/2,u_1) \leq d_\chi(u_0,u_1)$ as follows from the lemma below. This implies the desired estimate.
\end{proof}

\begin{lemma} \label{NaiveCompare}Suppose $u,v,w \in \mathcal H$ with $u \geq v \geq w$. Then we have $d_\chi(u,v) \leq d_\chi(u,w)$.
\end{lemma}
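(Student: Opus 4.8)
The plan is to realize both distances through the initial velocities of weak geodesics emanating from the common endpoint $u$, and to compare these velocities pointwise using the envelope description \eqref{SublevelSetId}. Let $[0,1] \ni t \to \alpha_t \in \mathcal H_\Delta$ be the weak geodesic joining $\alpha_0 = u$ and $\alpha_1 = w$, and let $[0,1] \ni t \to \beta_t \in \mathcal H_\Delta$ be the weak geodesic joining $\beta_0 = u$ and $\beta_1 = v$. By Theorem \ref{ChiChenTheorem} evaluated at $t=0$ we have $d_\chi(u,w) = \|\dot\alpha_0\|_{\chi,u}$ and $d_\chi(u,v) = \|\dot\beta_0\|_{\chi,u}$, so it suffices to compare the two tangent vectors $\dot\alpha_0$ and $\dot\beta_0$ at $u$.

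The conceptual heart of the argument is the pointwise inequality $\dot\alpha_0 \leq \dot\beta_0$. Using \eqref{SublevelSetId}, for every $\tau \in \Bbb R$ we have $\{\dot\alpha_0 \geq \tau\} = \{P(u, w-\tau) = u\}$ and $\{\dot\beta_0 \geq \tau\} = \{P(u, v-\tau) = u\}$. Since $w \leq v$, monotonicity of the rooftop envelope in its second argument gives $P(u, w-\tau) \leq P(u, v-\tau) \leq u$; hence at any point of the contact set $\{P(u, w-\tau) = u\}$ the value $P(u, v-\tau)$ is squeezed between $u$ and $u$, forcing $P(u, v-\tau)=u$ there as well. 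This yields the inclusion $\{\dot\alpha_0 \geq \tau\} \subseteq \{\dot\beta_0 \geq \tau\}$ for every $\tau$, which (taking $\tau = \dot\alpha_0(x)$ at each point $x$) is exactly the pointwise inequality $\dot\alpha_0 \leq \dot\beta_0$.

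To conclude I will upgrade this to an inequality of magnitudes. Since $t \to \beta_t(x)$ is convex with $\beta_1 = v \leq u = \beta_0$, convexity forces $\dot\beta_0 \leq \beta_1 - \beta_0 = v - u \leq 0$. Combined with $\dot\alpha_0 \leq \dot\beta_0$ this gives $\dot\alpha_0 \leq \dot\beta_0 \leq 0$, so that $|\dot\alpha_0| \geq |\dot\beta_0|$ pointwise. Because $\chi$ is even and nondecreasing on $[0,\infty)$, the gauge norm of \eqref{FinslerDef} is monotone with respect to the pointwise order on absolute values: any $r>0$ admissible for $\dot\alpha_0$ in \eqref{FinslerDef} is automatically admissible for $\dot\beta_0$, since $\chi(\dot\beta_0/r) \leq \chi(\dot\alpha_0/r)$. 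Therefore $\|\dot\beta_0\|_{\chi,u} \leq \|\dot\alpha_0\|_{\chi,u}$, that is $d_\chi(u,v) \leq d_\chi(u,w)$, as desired.

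I expect the only delicate point to be the justification that the initial velocities $\dot\alpha_0,\dot\beta_0$ are honest bounded functions on $X$ for which both \eqref{SublevelSetId} and the norm computation apply; this is already guaranteed by the bounded-Laplacian regularity of weak geodesics recalled in Section 2.2 and by the fact that Theorem \ref{ChiChenTheorem} is stated precisely for these velocities. Everything else is the envelope inclusion of the second paragraph, which is where the hypothesis $w \leq v$ enters, together with the routine monotonicity of the Orlicz gauge norm.
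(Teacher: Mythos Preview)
Your proof is correct and follows the same overall strategy as the paper: compute both distances via Theorem \ref{ChiChenTheorem} as $\|\dot\alpha_0\|_{\chi,u}$ and $\|\dot\beta_0\|_{\chi,u}$, establish the pointwise comparison $\dot\alpha_0 \leq \dot\beta_0 \leq 0$ (in your labeling), and then use monotonicity of the Orlicz norm in absolute value. The only difference is in how you justify $\dot\alpha_0 \leq \dot\beta_0$. You go through the sublevel-set identity \eqref{SublevelSetId} and monotonicity of the envelope $P(u,\cdot)$ to obtain the inclusion $\{\dot\alpha_0 \geq \tau\}\subseteq\{\dot\beta_0 \geq \tau\}$; the paper instead applies the comparison principle directly to the two weak geodesics, obtaining $\beta_t \geq \alpha_t$ for all $t$ (since $v \geq w$ and both share the endpoint $u$), and then simply differentiates at $t=0$. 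The paper's route is shorter and uses nothing beyond the maximum principle for \eqref{BVPGeod}; your route is a correct but more elaborate unpacking of the same phenomenon through the envelope machinery of Section~2.2.
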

\begin{proof} We notice that the weak geodesic $[0,1] \ni t \to \alpha_t,\beta_t \in \mathcal H_\Delta$ connecting $u,v$ and $u,w$ respectively are both decreasing, satisfy $\alpha \geq \beta$ by the comparison principle, and $\alpha_0 =\beta_0$. From this it follows that $0 \geq \dot \alpha_0  \geq \dot \beta_0$. Using this, \eqref{ChiDistGeodFormula} yields the desired estimate.
\end{proof}

Our next result is the analog of \cite[Lemma 6.3]{d}:

\begin{lemma} Suppose $\{ u_k\}_{k \in \Bbb N} \subset \mathcal H$ is a sequence decreasing pointwise to $u \in \mathcal E_{\tilde \chi}(X,\o)$. Then $d_\chi(u_l,u_k) \to 0$ as $l,k \to \infty$.\label{IntDistEst}
\end{lemma}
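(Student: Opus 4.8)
The plan is to reduce the statement to the decay of a single weighted Monge--Amp\`ere integral, and then to feed this into the energy estimates available for the classes $\mathcal E_{\tilde\chi}(X,\o)$. Without loss of generality assume $l \leq k$, so that $u \leq u_k \leq u_l$ by monotonicity of the sequence. Since $u_k \leq u_l$ and both lie in $\mathcal H$, Lemma \ref{Mdist_est} applies with $u_0 = u_k$, $u_1 = u_l$ and yields
$$d_\chi(u_k, u_l) \leq \|u_l - u_k\|_{\chi, u_k}.$$
By the two-sided estimate of Proposition \ref{NormIntegralEst} — whose comparison functions $m_p, M_p$ are universal and in particular do not depend on the (varying) measure $\o_{u_k}^n$ — it therefore suffices to prove that
$$\int_X \chi(u_l - u_k)\,\o_{u_k}^n \longrightarrow 0 \quad \text{as } l,k \to \infty.$$

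First I would rewrite this quantity in the language of weighted energies. As $u_l - u_k \geq 0$ and $\chi$ is even, one has $\chi(u_l - u_k) = -\tilde\chi(u_k - u_l) = |\tilde\chi(u_k - u_l)|$, so that
$$\int_X \chi(u_l - u_k)\,\o_{u_k}^n = \int_X |\tilde\chi(u_k - u_l)|\,\o_{u_k}^n.$$
The right hand side is precisely the type of weighted integral controlled by the finite energy estimates of \cite{gz}, as summarized in \cite[Section 2.3]{d}. Concretely, I would invoke the fundamental comparison for $\mathcal E_{\tilde\chi}(X,\o)$ — applicable here because $u_k, u_l$ are bounded (indeed smooth) and ordered $u_k \leq u_l$ — to bound this by a difference of weighted energies,
$$\int_X |\tilde\chi(u_k - u_l)|\,\o_{u_k}^n \leq C(p)\,\big|E_{\tilde\chi}(u_k) - E_{\tilde\chi}(u_l)\big|,$$
the essential structural point being that the Monge--Amp\`ere mass is carried by the smaller potential $u_k$.

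It then remains to show that the energy differences tend to $0$. This follows from the monotone continuity of $E_{\tilde\chi}$: since $u_m \downarrow u$ and, by hypothesis, $u \in \mathcal E_{\tilde\chi}(X,\o)$, the standard convergence result for $E_{\tilde\chi}$ along decreasing sequences \cite{gz} gives $E_{\tilde\chi}(u_m) \to E_{\tilde\chi}(u) > -\infty$. In particular the sequence $\{E_{\tilde\chi}(u_m)\}_{m}$ is convergent, hence Cauchy, so $|E_{\tilde\chi}(u_k) - E_{\tilde\chi}(u_l)| \to 0$ as $l,k \to \infty$. Combining the three displayed estimates with Proposition \ref{NormIntegralEst} gives $\|u_l - u_k\|_{\chi, u_k} \to 0$, and the conclusion $d_\chi(u_l, u_k) \to 0$ follows.

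The main obstacle I expect lies in the middle step: one must invoke the correct pluripotential-theoretic inequality from \cite{gz} with the Monge--Amp\`ere mass on the smaller potential, and check that the constant $C(p)$ depends only on $p$ through $\chi \in \mathcal W^+_p$ (this is where the growth estimate \eqref{GrowthControl} is used) and not on the individual potentials. The hypothesis $u \in \mathcal E_{\tilde\chi}(X,\o)$ enters exactly to force the energies $E_{\tilde\chi}(u_m)$ to converge to a finite limit; without finiteness of $E_{\tilde\chi}(u)$ these energies could diverge and the approximating sequence would fail to be $d_\chi$--Cauchy.
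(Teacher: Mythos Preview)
Your opening reduction via Lemma \ref{Mdist_est} and Proposition \ref{NormIntegralEst} is exactly right and matches the paper. The gap is the middle step, where you assert
\[
\int_X |\tilde\chi(u_k - u_l)|\,\o_{u_k}^n \;\leq\; C(p)\,\big|E_{\tilde\chi}(u_k) - E_{\tilde\chi}(u_l)\big|.
\]
This is not a result in \cite{gz}, and as stated it is false. Take $u_l \equiv 1$ and $u_k \equiv 1-\eps$ with $0<\eps<1$; both are constants, hence lie in $\mathcal H$, and $\o_{u_k}^n=\o_{u_l}^n=\o^n$. The left side equals $\chi(\eps)>0$, while $\tilde\chi(u_k)=\tilde\chi(u_l)=0$ (both potentials are positive), so the right side vanishes. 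Even after normalizing so that all $u_m\leq 0$, a bound of $\int_X \chi(u_l-u_k)\o_{u_k}^n$ by the \emph{difference} of the self-energies $E_{\tilde\chi}(u_k)$ and $E_{\tilde\chi}(u_l)$ is not among the comparison estimates of \cite{gz}, and you give no argument for it. You correctly flag this step as the main obstacle, but you do not resolve it.

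The paper's route avoids absolute energies altogether. It shifts the reference form to $\o_{u_l}$: with respect to $\o+i\del\dbar u_l$ one has $u-u_l \leq u_k-u_l \leq 0$ in $\mathcal E_{\tilde\chi}(X,\o_{u_l})$, and then \cite[Lemma 3.5]{gz} (a monotonicity estimate for the $\tilde\chi$--energy in that shifted class) gives
\[
\int_X \chi(u_k - u_l)\,\o_{u_k}^n \;\leq\; C(p)\int_X \chi(u - u_l)\,\o_u^n.
\]
The right side is a single integral, independent of $k$, which tends to $0$ as $l\to\infty$ by dominated convergence (the hypothesis $u\in\mathcal E_{\tilde\chi}(X,\o)$ supplies the dominating function). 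The idea you are missing is this translation trick: rather than trying to extract a Cauchy estimate from $E_{\tilde\chi}(u_k)-E_{\tilde\chi}(u_l)$, one compares $u_k-u_l$ directly to the ``worst case'' difference $u-u_l$ in a class where both are nonpositive.
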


\begin{proof} Our argument is just a small modification of the original proof. Suppose that $l \leq k$. Then $u_k\leq u_l$, hence by the previous result and Proposition \ref{NormIntegralEst} we have:
$$d_\chi(u_l,u_k) \leq \|u_l - u_k\|_{\chi,u_k} \leq M_{p}\Big(\int_X\chi(u_k-u_l)\o_{u_k}^n/\chi(1)\Big).$$
We clearly have $u - u_l, u_k - u_l \in \mathcal E_{\tilde \chi}(X,\o +i\partial\bar\partial u_l)$ and $u - u_l\leq u_k - u_l\leq0$. Hence, applying \cite[Lemma 3.5]{gz} for the class $\mathcal E_{\tilde \chi}(X,\o + i\partial\bar\partial u_l)$ we obtain that there exits $C=C(p)>0$ such that
\begin{equation}\label{estimate}
d_\chi(u_l,u_k)\leq M_{p}\Big(\int_X\chi(u_k-u_l)(\o_{u_k})^n/\chi(1)\Big) \leq CM_{p}\Big(\int_X\chi(u-u_l)\o_u^n\Big).
\end{equation}
As $u_l$ decreases to $u \in \mathcal E_{\tilde \chi}(X,\o)$, it follows from the dominated convergence theorem that $d_\chi(u_l,u_k) \to 0$ as $l,k \to \infty$.
\end{proof}

The next result is the analog of \cite[Lemma 6.4]{d} and its proof is the same as the original.

\begin{lemma} Given  $u_0,u_1 \in \mathcal E_{\tilde \chi}(X,\o)$, the limit in \eqref{EchiDistDef} is finite and independent of the approximating sequences $u^k_0, u^k_1 \in \mathcal H$.
\end{lemma}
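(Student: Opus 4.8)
The plan is to treat the two assertions in turn: first that the limit in \eqref{EchiDistDef} exists and is finite, and then that it does not depend on the chosen approximants. For the first part I would fix decreasing sequences $\{u^k_0\},\{u^k_1\}\subset\mathcal H$ with $u^k_0\downarrow u_0$, $u^k_1\downarrow u_1$, and set $a_k=d_\chi(u^k_0,u^k_1)$, which is finite because $(\mathcal H,d_\chi)$ is a genuine metric space by Theorem \ref{ChiChenTheorem}. Two applications of the triangle inequality give $|a_k-a_l|\leq d_\chi(u^k_0,u^l_0)+d_\chi(u^k_1,u^l_1)$. Since each of $\{u^k_0\}$ and $\{u^k_1\}$ is a decreasing sequence in $\mathcal H$ with limit in $\mathcal E_{\tilde\chi}(X,\o)$, Lemma \ref{IntDistEst} shows that both terms on the right tend to $0$ as $k,l\to\infty$. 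Hence $\{a_k\}$ is Cauchy in $\mathbb R$ and $\lim_k a_k$ exists and is finite.

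For independence, let $\{v^k_0\},\{v^k_1\}$ be a second pair of decreasing approximants of $u_0,u_1$, and let $b=\lim_k d_\chi(v^k_0,v^k_1)$, which exists by the first part applied to this system. The triangle inequality yields $|d_\chi(u^k_0,u^k_1)-d_\chi(v^k_0,v^k_1)|\leq d_\chi(u^k_0,v^k_0)+d_\chi(u^k_1,v^k_1)$, so it suffices to prove the following claim: whenever $\{p^k\},\{q^k\}\subset\mathcal H$ both decrease to the same $w\in\mathcal E_{\tilde\chi}(X,\o)$, one has $d_\chi(p^k,q^k)\to0$. Granting this, both terms on the right vanish in the limit and $a=b$ follows.

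To prove the claim I would introduce the continuous $\o$-plurisubharmonic envelopes $r^k=\max(p^k,q^k)$, which form a decreasing sequence with limit $w$ and dominate both $p^k$ and $q^k$. As $r^k$ is continuous, the regularization of Blocki--Kolodziej \cite{bk} together with Dini's theorem produces $\phi^k\in\mathcal H$ with $r^k\leq\phi^k\leq r^k+1/k$. Now $p^k\leq\phi^k$ and $q^k\leq\phi^k$ are comparable pairs in $\mathcal H$, so the upper bound of Lemma \ref{Mdist_est} gives
$$d_\chi(p^k,q^k)\leq\|\phi^k-p^k\|_{\chi,p^k}+\|\phi^k-q^k\|_{\chi,q^k}.$$
Using Proposition \ref{NormIntegralEst} I would convert each norm into an energy integral, say $\|\phi^k-p^k\|_{\chi,p^k}\leq M_{1/p}\big(\int_X\chi(\phi^k-p^k)\o_{p^k}^n/\chi(1)\big)$, and then bound the integrals exactly as in Lemma \ref{IntDistEst}: passing to the reference $\o_{\phi^k}$, where $w-\phi^k\leq p^k-\phi^k\leq0$, and invoking the monotonicity of weighted energy from \cite[Lemma 3.5]{gz} gives $\int_X\chi(\phi^k-p^k)\o_{p^k}^n\leq C\int_X\chi(\phi^k-w)\o_w^n$. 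Since $\phi^k-w\leq(p^k-w)+(q^k-w)+1/k$ decreases to $0$ and is dominated by the fixed $\chi$-integrable function $(p^1-w)+(q^1-w)$, dominated convergence forces this integral to $0$; the same argument disposes of the $q^k$ term, proving the claim.

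The one delicate point, and the main obstacle, is that $\max(p^k,q^k)$ is only continuous rather than smooth, so the two incomparable approximants cannot be compared directly inside $\mathcal H$; this is exactly what forces the detour through the regularization $\phi^k$ and the energy estimates. Controlling the resulting integrals against the \emph{varying} reference measures $\o_{p^k}^n$ and $\o_{q^k}^n$ is where the growth condition $\chi\in\mathcal W^+_p$ (through \eqref{GrowthControl} and Proposition \ref{NormIntegralEst}) and the comparison estimate \cite[Lemma 3.5]{gz} are indispensable, in complete parallel with Lemma \ref{IntDistEst}. Everything else is routine bookkeeping with the triangle inequality.
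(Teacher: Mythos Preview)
Your argument is correct and follows the same route as the paper's reference \cite[Lemma~6.4]{d}: the triangle inequality together with Lemma~\ref{IntDistEst} for existence, and for independence the reduction to $d_\chi(p^k,q^k)\to0$ via a smooth majorant $\phi^k$ of $\max(p^k,q^k)$ combined with Lemma~\ref{Mdist_est} and the energy comparison \cite[Lemma~3.5]{gz}, exactly as in the proof of Lemma~\ref{IntDistEst}. The only cosmetic slip is in the dominated convergence step: the relevant majorant is $\chi\big((p^1-w)+(q^1-w)+1\big)$ rather than $(p^1-w)+(q^1-w)$, and its $\omega_w^n$-integrability follows at once from $w\in\mathcal E_{\tilde\chi}(X,\omega)$ and the growth estimate~\eqref{GrowthControl}.
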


To conclude that $d_\chi$ is a metric on $\mathcal E_{\tilde \chi}(X,\o)$ all we need is that $d_\chi(u_0,u_1)=0$ implies $u_0 = u_1$. This result is analogous to \cite[Lemma 6.7]{d} and using Proposition \ref{NormIntegralEst} its proof is carried out the same way:
\begin{lemma} Suppose $u_0,u_1 \in \mathcal E_{\tilde \chi}(X,\o)$  and $d_\chi(u_0,u_1)=0$. Then $u_0=u_1$.
\end{lemma}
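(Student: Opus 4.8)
The plan is to prove that $u_0$ and $u_1$ both coincide with their rooftop envelope $P:=P(u_0,u_1)\in\mathcal E_{\tilde\chi}(X,\o)$, which immediately gives $u_0=u_1$. Since $P\le u_0$ and $\o_{u_0}^n$ charges no pluripolar set, the domination principle for $\mathcal E(X,\o)$ (see \cite{gz}) shows that it suffices to check $u_0=P$ merely $\o_{u_0}^n$--almost everywhere; and because $\chi$ vanishes only at the origin, this in turn reduces to the single energy identity $\int_X\chi(u_0-P)\,\o_{u_0}^n=0$ together with its analogue for $u_1$. Thus the whole problem is to bound $\int_X\chi(u_0-P)\,\o_{u_0}^n$ by a quantity that tends to $0$ with $d_\chi(u_0,u_1)$.

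The key is a pointwise estimate for smooth endpoints. Assume first $u_0,u_1\in\mathcal H$, let $[0,1]\ni t\to w_t\in\mathcal H_\Delta$ be the weak geodesic joining them (so $w_0=u_0$, $w_1=u_1$), and recall from \eqref{WeakGeodEnvID} with $\tau=0$ that $P=\inf_{t\in[0,1]}w_t$. For fixed $x$ the function $t\mapsto w_t(x)$ is convex with $w_t(x)\ge u_0(x)+t\dot w_0(x)$, whose minimum over $[0,1]$ equals $u_0(x)+\min(0,\dot w_0(x))$; hence $0\le u_0-P\le \max(0,-\dot w_0)\le|\dot w_0|$ pointwise on $X$. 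As $\chi$ is even and nondecreasing on $[0,\infty)$, integrating $\chi(u_0-P)\le\chi(\dot w_0)$ against $\o_{u_0}^n$ and applying Proposition \ref{NormIntegralEst} with $\|\dot w_0\|_{\chi,u_0}=d_\chi(u_0,u_1)$ (Theorem \ref{ChiChenTheorem}) gives
\[ \int_X\chi(u_0-P)\,\o_{u_0}^n\le\int_X\chi(\dot w_0)\,\o_{u_0}^n\le\chi(1)\,M_p\big(d_\chi(u_0,u_1)\big). \]
The same bound holds with $u_1$ in place of $u_0$.

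To reach the general case $u_0,u_1\in\mathcal E_{\tilde\chi}(X,\o)$ I would approximate: take $\mathcal H\ni u_0^k\downarrow u_0$, $u_1^k\downarrow u_1$ and set $P^k:=P(u_0^k,u_1^k)$, so $P^k\downarrow P$ by monotonicity of the envelope. By \eqref{EchiDistDef} we have $d_\chi(u_0^k,u_1^k)\to d_\chi(u_0,u_1)=0$, so the estimate above yields $\int_X\chi(u_0^k-P^k)\,\o_{u_0^k}^n\to0$. The remaining---and, I expect, the only delicate---step is to let $k\to\infty$ inside this integral and conclude $\int_X\chi(u_0-P)\,\o_{u_0}^n=0$: here both the integrand $\chi(u_0^k-P^k)$ and the measure $\o_{u_0^k}^n$ vary, and the difference $u_0^k-P^k$ is not monotone in $k$, so one cannot simply invoke monotone convergence. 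This is resolved exactly as in \cite[Lemma 6.7]{d} by the finite--energy toolbox---the weak continuity of the non-pluripolar Monge--Amp\`ere operator along decreasing sequences in $\mathcal E(X,\o)$ and the lower semicontinuity estimates of \cite[Section 3]{gz}, phrased through Proposition \ref{NormIntegralEst} as in Lemma \ref{IntDistEst}---which give $\int_X\chi(u_0-P)\,\o_{u_0}^n\le\liminf_k\int_X\chi(u_0^k-P^k)\,\o_{u_0^k}^n=0$. Running the same argument for $u_1$ yields $u_0=P=u_1$.
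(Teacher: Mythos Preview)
Your proposal is correct and follows essentially the same route the paper indicates: reduce to showing $u_0=P(u_0,u_1)=u_1$, establish for smooth endpoints the pointwise bound $0\le u_0-P\le|\dot w_0|$ via \eqref{WeakGeodEnvID} and convexity, convert this to an energy bound using Proposition \ref{NormIntegralEst} and Theorem \ref{ChiChenTheorem}, and then pass to the limit exactly as in \cite[Lemma 6.7]{d}. The paper itself omits the details and simply refers to that same source together with Proposition \ref{NormIntegralEst}, so your write-up is in fact more explicit than the paper's.
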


By \cite[Theorem 6(i)]{d} it follows that given $u_0,u_1\in \mathcal E_{\tilde \chi}(X,\o)$, for the weak geodesic segment $t \to u_t$ connecting  $u_0,u_1$, as defined in \eqref{EchiGeodDef}, we have $u_t \in\mathcal E_{\tilde \chi}(X,\o), \ t \in (0,1)$. Next we show that this weak geodesic is an actual geodesic segment in $(\mathcal E_\chi(X,\o),d_\chi)$ in the sense of metric spaces. One needs a technical lemma generalizing \cite[Lemma 6.8]{d}:

\begin{lemma} \label{geod_tangent_limit} Suppose $v_0,v_1 \in \mathcal H_0=\textup{PSH}(X,\o)\cap L^\infty$ and $\{v^j_1 \}_{j \in \Bbb N}\subset \mathcal H_0$ is sequence decreasing to $v_1$. By $(0,1) \ni t \to v_t,v_t^j \in \mathcal H_0$ we denote the bounded weak geodesic segments connecting $v_0,v_1$ and $v_0, v^j_1$ respectively. As we have convexity in the $t-$variable, we can define $\dot v_0 = \lim_{t \to 0}(v_t - v_0)/t$ and $\dot v^j_0 = \lim_{t \to 0}(v^j_t - v_0)/t$. The following holds:
$$\lim_{j \to \infty}\|\dot {v_0^j}\|_{\chi,v_0} = \|\dot {v_0}\|_{\chi,v_0}.$$
\end{lemma}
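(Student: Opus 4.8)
The plan is to reduce the statement to a pointwise convergence of initial velocities and then invoke the integral characterization of the Orlicz norm. First I would record the monotone structure of the approximating geodesics. Since $v^j_1 \downarrow v_1$ while the left endpoint $v_0$ is fixed, the comparison principle for bounded weak geodesics (already used in Lemma \ref{NaiveCompare}) gives $v^{j}_t \geq v^{j+1}_t \geq v_t$ for all $t \in [0,1]$, so the $v^j_t$ decrease in $j$. Their limit is a bounded, $\o$-psh, $t$-convex curve solving the homogeneous \MA equation with endpoints $v_0$ and $\lim_j v^j_1 = v_1$; by uniqueness of the bounded weak geodesic it must coincide with $v_t$, which is the consistency behind the definition \eqref{EchiGeodDef}. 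Passing to initial slopes, $\dot v^j_0 \geq \dot v^{j+1}_0 \geq \dot v_0$, so $\dot v^j_0 \downarrow g$ pointwise for some $g \geq \dot v_0$. Moreover all these velocities are uniformly bounded: $t$-convexity gives $\dot v^j_0 \leq v^j_1 - v_0 \leq v^1_1 - v_0$ from above, while $\dot v^j_0 \geq \dot v_0$ bounds them from below by the $L^\infty$ function $\dot v_0$, so $|\dot v^j_0| \leq C$ uniformly in $j$.

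The heart of the argument is to identify $g = \dot v_0$, and here I would use the envelope description of the velocity. By \eqref{SublevelSetId}, for every $\tau \in \RR$ one has $\{\dot v^j_0 \geq \tau\} = \{P(v_0, v^j_1 - \tau) = v_0\}$ and $\{\dot v_0 \geq \tau\} = \{P(v_0, v_1 - \tau) = v_0\}$. As $v^j_1 - \tau \downarrow v_1 - \tau$, the envelopes decrease, and their limit $h := \lim_j P(v_0, v^j_1 - \tau)$ is $\o$-psh with $h \leq \min(v_0, v_1 - \tau)$, hence $h \leq P(v_0, v_1 - \tau)$; the reverse inequality is immediate from monotonicity, so $P(v_0, v^j_1 - \tau) \downarrow P(v_0, v_1 - \tau)$ pointwise on all of $X$. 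Since every term of a decreasing sequence equals $v_0$ exactly on its contact set, this pointwise convergence forces $\bigcap_j \{P(v_0, v^j_1 - \tau) = v_0\} = \{P(v_0, v_1 - \tau) = v_0\}$, i.e. $\{\dot v^j_0 \geq \tau\} \downarrow \{\dot v_0 \geq \tau\}$ for every $\tau$. A routine super-level set argument then yields $g = \dot v_0$: if $g(x) > \tau > \dot v_0(x)$ for some $x$, then $x$ belongs to every $\{\dot v^j_0 \geq \tau\}$ but not to $\{\dot v_0 \geq \tau\}$, contradicting the set convergence.

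To conclude I would combine the pointwise limit $\dot v^j_0 \to \dot v_0$ and the uniform bound $|\dot v^j_0| \leq C$ with the dominated convergence theorem and Proposition \ref{NormIntegralEst}. If $N := \|\dot v_0\|_{\chi, v_0} > 0$, then $\chi(\dot v^j_0/N) \leq \chi(C/N)$ is bounded and $\o_{v_0}^n$ is a finite measure, so $\int_X \chi(\dot v^j_0/N)\,\o_{v_0}^n \to \int_X \chi(\dot v_0/N)\,\o_{v_0}^n = \chi(1)$, the last equality being \eqref{OrliczNormId}; the final clause of Proposition \ref{NormIntegralEst} then gives $\|\dot v^j_0\|_{\chi, v_0} \to N$. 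If instead $N = 0$, then $\dot v_0 \equiv 0$ and the same dominated convergence argument gives $\int_X \chi(\dot v^j_0)\,\o_{v_0}^n \to 0$, whence $\|\dot v^j_0\|_{\chi,v_0} \to 0$ again by Proposition \ref{NormIntegralEst}. The only genuinely delicate point is the second paragraph: controlling the contact sets of the envelopes in the decreasing limit, which is what upgrades the trivial bound $g \geq \dot v_0$ to the equality $g = \dot v_0$. Everything else is soft once the velocities are known to be uniformly bounded, so no uniform-integrability subtleties arise.
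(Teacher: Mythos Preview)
Your argument is correct, and the overall architecture---comparison principle, pointwise monotone convergence of the tangents, then dominated convergence combined with Proposition~\ref{NormIntegralEst}---matches the paper's proof. The difference lies in the step you single out as ``genuinely delicate'': identifying $g=\lim_j \dot v^j_0$ with $\dot v_0$. The paper does not invoke \eqref{SublevelSetId} or any contact--set analysis here. Once you have established (as you do in your first paragraph) that $v^j_t \searrow v_t$ for each fixed $t$, $t$--convexity gives $\dot v^j_0(x)=\inf_{t\in(0,1]}(v^j_t(x)-v_0(x))/t$, and then
\[
\lim_{j}\dot v^j_0(x)=\inf_j\inf_t\frac{v^j_t(x)-v_0(x)}{t}=\inf_t\inf_j\frac{v^j_t(x)-v_0(x)}{t}=\inf_t\frac{v_t(x)-v_0(x)}{t}=\dot v_0(x),
\]
since iterated infima commute. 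So the ``delicate'' point is in fact a one--line consequence of monotonicity; the paper simply asserts $\dot v^j_0\searrow \dot v_0$ and moves on. Your envelope route is a legitimate alternative and nicely exhibits the link between initial tangents and the operator $P$, but note that \eqref{SublevelSetId} is stated in Section~2.2 only for $u_0,u_1\in\mathcal H$, so using it for $v_0,v_1\in\mathcal H_0$ tacitly appeals to the extension proved in \cite{d}. For the uniform bound, the paper cites Berndtsson's observation that $\dot v_0,\dot v^j_0\in L^\infty$; your upper bound via $t$--convexity is self--contained, but your lower bound ``$\dot v^j_0\geq \dot v_0\in L^\infty$'' still rests on knowing $\dot v_0$ is bounded, which is exactly that same fact.
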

\begin{proof} By an observation of Berndtsson (see Section 2.1 \cite{br}), there exists $C >0$ such that $\|\dot v_0\|_{L^\infty(X)}, \|\dot v^j_0\|_{L^\infty(X)} \leq C$. We also have $v \leq v^j, \ j \in \Bbb N$ by the comparison principle. As all $v^j$ are $t$--convex and share the same starting point, it also follows that $\dot v^j_0 \searrow \dot v_0$ pointwise. Proposition \ref{NormIntegralEst} and the dominated convergence theorem implies now that $\| \dot v_0^j - \dot v_0\|_{\chi,v_0} \to 0$.
\end{proof}
Given a metric space $(M,\rho)$, a curve $(0,1) \ni t \to h_t \in M$ is a (parametrized) geodesic, if there exists $c >0$ such that
$${\rho(h_l,h_s)}=c|l-s|, \ l,s \in (0,1).$$
Using the last lemma, the proof of the next result is carried out the same way the analagous result in \cite{d}.
\begin{lemma} Suppose $u_0,u_1 \in \mathcal E_{\tilde \chi}(X,\o)$ and $(0,1)\ni t \to u_t \in \mathcal E_{\tilde \chi}(X,\o)$ is the weak geodesic segment connecting $u_0,u_1$ defined in \eqref{EchiGeodDef}. Then $t \to u_t$ is a geodesic segment in $(\mathcal E_{\tilde \chi}(X,\o),d_\chi)$ in the sense of metric spaces.
\end{lemma}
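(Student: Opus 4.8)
The plan is to verify the constant-speed condition $d_\chi(u_s,u_l)=(l-s)\,d$ for all $0\le s\le l\le 1$, where $d:=d_\chi(u_0,u_1)$; this is precisely the definition of a (parametrized) geodesic segment. I would first reduce everything to the single \emph{upper} bound
$$d_\chi(u_s,u_l)\le (l-s)\,d,\qquad 0\le s\le l\le 1.$$
Granting this, for any fixed $s\le l$ the triangle inequality gives
$$d\le d_\chi(u_0,u_s)+d_\chi(u_s,u_l)+d_\chi(u_l,u_1)\le \big(s+(l-s)+(1-l)\big)d=d,$$
so every inequality in the chain must be an equality; in particular $d_\chi(u_s,u_l)=(l-s)\,d$, as desired.

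To prove the upper bound I would approximate through the smooth-endpoint geodesics used to define $t\to u_t$. For each $k$ the curve $t\to u^k_t$ is the weak geodesic of \eqref{BVPGeod} joining the smooth potentials $u^k_0,u^k_1\in\mathcal H$, so by Remark \ref{chilengthgeodremark} its speed $\|\dot u^k_t\|_{\chi,u^k_t}$ is constant in $t$ and equal to $d_\chi(u^k_0,u^k_1)$ by Theorem \ref{ChiChenTheorem}. The subsegment from $u^k_s$ to $u^k_l$ is again a weak geodesic, now between the bounded potentials $u^k_s,u^k_l\in\mathcal H_\Delta$; re-approximating these by smooth decreasing sequences and passing the tangent norms to the limit via Lemma \ref{geod_tangent_limit} identifies $d_\chi(u^k_s,u^k_l)$ with $(l-s)$ times this constant speed, giving
$$d_\chi(u^k_s,u^k_l)=(l-s)\,d_\chi(u^k_0,u^k_1).$$

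I would then let $k\to\infty$. The right-hand side tends to $(l-s)\,d$ by the very definition \eqref{EchiDistDef}. For the left-hand side, since $u^k_s\searrow u_s$ and $u^k_l\searrow u_l$ with all terms in $\mathcal E_{\tilde\chi}(X,\o)$, the decreasing-sequence estimate of Lemma \ref{IntDistEst} (which, via \eqref{estimate} and \cite[Lemma 3.5]{gz}, applies verbatim to decreasing sequences inside $\mathcal E_{\tilde\chi}(X,\o)$) yields $d_\chi(u^k_s,u_s)\to 0$ and $d_\chi(u^k_l,u_l)\to 0$; the triangle inequality then forces $d_\chi(u^k_s,u^k_l)\to d_\chi(u_s,u_l)$. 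Combining the two limits gives the upper bound, and hence the lemma.

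I expect the genuine difficulty to be concentrated in the identity $d_\chi(u^k_s,u^k_l)=(l-s)\,d_\chi(u^k_0,u^k_1)$, that is, in transferring the constant-speed/geodesic property from \emph{smooth} endpoints (where Theorem \ref{ChiChenTheorem} applies directly) to the merely bounded endpoints $u^k_s,u^k_l\in\mathcal H_\Delta$ produced by the subsegments. This is exactly the point where Lemma \ref{geod_tangent_limit} is indispensable: it guarantees that when one approximates a bounded endpoint by a smooth decreasing sequence, the corresponding geodesic speeds $\|\dot v^j_0\|_{\chi,v_0}$ converge to the correct limit, so that the distance formula survives the approximation. The remaining care is bookkeeping for the resulting double limit (in $k$ and in the auxiliary smoothing index), handled by a standard diagonal argument together with the uniform $L^\infty$ control on the geodesic velocities coming from Berndtsson's estimate invoked in Lemma \ref{geod_tangent_limit}.
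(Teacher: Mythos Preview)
Your proposal is correct and follows essentially the same route as the paper (which simply defers to the analogous argument in \cite{d}): use Lemma~\ref{geod_tangent_limit} to carry the constant-speed distance formula from smooth endpoints over to the $\mathcal H_\Delta$-endpoints $u^k_s,u^k_l$, obtaining $d_\chi(u^k_s,u^k_l)=(l-s)\,d_\chi(u^k_0,u^k_1)$, and then pass to the limit in $k$. Note only that the auxiliary facts you invoke---monotone $d_\chi$-convergence $d_\chi(u^k_s,u_s)\to 0$ and the $\mathcal H_\Delta$-endpoint distance formula---are, in this paper's linear ordering, recorded slightly later as Proposition~\ref{Mdist_est_gen} and Lemma~\ref{distgeod_general} (the latter also using Lemma~\ref{pushforward_lemma}); none of these depend on the present lemma, so there is no circularity, but a reader following the paper sequentially would borrow them from \cite{d} rather than forward-reference.
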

Another technical lemma is needed:
\begin{lemma} Suppose $u,v \in \mathcal E_{\tilde \chi}(X,\o)$ satisfies $u \leq v$. Then:
$$d_\chi(u,v) \leq C M_p \Big(\int_X \chi(v-u)\o_u^n\Big)$$
where $C>0$ only depends on $\dim X$ and $p \geq 1$.
\end{lemma}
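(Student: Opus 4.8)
The plan is to reduce to an ordered pair of smooth potentials and then remove the regularity by two successive monotone approximations, each arranged so that exactly one of the two objects (either the base measure or the upper potential) is held fixed. The starting point is that whenever $\phi_0,\phi_1\in\mathcal H$ with $\phi_0\le\phi_1$, Lemma \ref{Mdist_est} combined with Proposition \ref{NormIntegralEst} gives
\begin{equation*}
d_\chi(\phi_0,\phi_1)\le\|\phi_1-\phi_0\|_{\chi,\phi_0}\le M_p\Big(\tfrac{1}{\chi(1)}\int_X\chi(\phi_1-\phi_0)\o_{\phi_0}^n\Big).
\end{equation*}
Since $\chi\in\mathcal W_p^+$ is normalized one checks $1/p\le\chi(1)\le1$, so the factor $\chi(1)^{-1}$ is absorbed into a constant depending only on $p$ (using $M_p(cx)\le\max\{c,c^p\}M_p(x)$). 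Thus the entire difficulty is to replace the base measure $\o_{\phi_0}^n$ by the measure $\o_u^n$ required by the statement while the potentials are regularized.

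Step A (upper potential smooth). Suppose first that $v\in\mathcal H$ and $u\in\mathcal E_{\tilde\chi}(X,\o)$ with $u\le v$. Using \cite{bk} choose $u^k\in\mathcal H$ with $u^k\searrow u$ and $u^k\le v$. Applying the displayed estimate to the ordered pair $u^k\le v$,
\begin{equation*}
d_\chi(u^k,v)\le M_p\Big(\tfrac{1}{\chi(1)}\int_X\chi(v-u^k)\o_{u^k}^n\Big).
\end{equation*}
The point is that in the shifted class $\mathcal E_{\tilde\chi}(X,\o_v)$ the functions $u-v$ and $u^k-v$ satisfy $u-v\le u^k-v\le0$ and carry the non-pluripolar measures $\o_u^n$ and $\o_{u^k}^n$, respectively. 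Exactly as in the proof of Lemma \ref{IntDistEst}, \cite[Lemma 3.5]{gz} applied with the \emph{fixed} base $\o_v$ produces a constant $C=C(n,p)$ with
\begin{equation*}
\int_X\chi(v-u^k)\o_{u^k}^n\le C\int_X\chi(v-u)\o_u^n.
\end{equation*}
The right-hand side is finite because $u\in\mathcal E_{\tilde\chi}$, $v$ is bounded, and $\chi$ satisfies the growth control \eqref{GrowthControl}. Letting $k\to\infty$ and invoking \eqref{EchiDistDef} gives $d_\chi(u,v)\le C'M_p(\int_X\chi(v-u)\o_u^n)$ with $C'=C'(n,p)$.

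Step B (both potentials of finite energy) and the main obstacle. For general $u\le v$ in $\mathcal E_{\tilde\chi}(X,\o)$, choose $v^k\in\mathcal H$ with $v^k\searrow v$; then $u\le v\le v^k$, so Step A applies to each pair $(u,v^k)$ and yields $d_\chi(u,v^k)\le C'M_p(\int_X\chi(v^k-u)\o_u^n)$. Now the base measure $\o_u^n$ is the same for every $k$, which is precisely what makes the passage to the limit tractable: since $v^k\searrow v$ we have $0\le v^k-u\searrow v-u$, hence $\chi(v^k-u)\searrow\chi(v-u)$ pointwise, and dominated convergence gives $\int_X\chi(v^k-u)\o_u^n\to\int_X\chi(v-u)\o_u^n$. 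On the left, $v^k\searrow v$ together with \eqref{EchiDistDef} gives $d_\chi(u,v^k)\to d_\chi(u,v)$, and the claim follows from the continuity of $M_p$. I expect the main obstacle to be the energy-comparison in Step A: one must verify that \cite[Lemma 3.5]{gz} genuinely applies to the (merely bounded, not smooth) functions $u-v,\ u^k-v$ in the fixed class $\mathcal E_{\tilde\chi}(X,\o_v)$, with a constant depending only on $n$ and $p$; securing the ordered smooth approximations $u^k\le v$ and the finiteness of $\int_X\chi(v-u)\o_u^n$ are the accompanying technical points.
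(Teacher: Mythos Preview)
Your argument is correct and follows essentially the same route as the paper. The only organizational difference is that the paper approximates $u$ and $v$ \emph{simultaneously} by smooth $u_k\le v_k$, applies \cite[Lemma 3.5]{gz} in the class $\mathcal E_{\tilde\chi}(X,\o_{v_k})$ to obtain
\[
d_\chi(u_k,v_k)\le C\,M_p\Big(\int_X\chi(u-v_k)\,\o_u^n\Big),
\]
and then passes to the limit by dominated convergence in one stroke; you instead split this into Step~A ($v$ smooth) and Step~B ($v$ general). Both approaches hinge on the same key ingredient, \cite[Lemma 3.5]{gz}, applied with the smooth potential playing the role of the fixed base, and both need to justify the dominated convergence at the end in the same way.

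Two small remarks. First, in Step~B the convergence $d_\chi(u,v^k)\to d_\chi(u,v)$ is not literally \eqref{EchiDistDef} (since $u$ is not smooth) but follows from Proposition~\ref{Mdist_est_gen} and the triangle inequality. Second, the technical point you flag about producing smooth $u^k\searrow u$ with $u^k\le v$ is exactly the same issue the paper faces when arranging $u_k\le v_k$; it is handled by a standard compactness/$\varepsilon$-shift argument (e.g.\ replace $v$ by $v+1/k$ so that $u<v+1/k$ strictly, and use that smooth approximants of $u$ eventually drop below any fixed smooth function strictly above $u$).
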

\begin{proof}Let $u_k,v_k \in \mathcal H$ be sequences decreasing to $u,v$, with the additional property $u_k \leq v_k, \ k \geq1$. By Lemma \ref{Mdist_est} and Remark \ref{NormIntegralEst} we have:
$$d_\chi(u_k,v_k) \leq \|v_k - u_k\|_{\chi,u_k} \leq C M_p \Big( \int_X \chi(u_k-v_k)\o_{u_k}^n\Big).$$
We clearly have $u-v_k, u_k -v_k \in \mathcal E_{\tilde \chi}(X,\o +i\partial\bar\partial v_k)$ and $u-v_k\leq u_k -v_k \leq 0$. Applying \cite[Lemma 3.5]{gz} to $\mathcal E_{\tilde \chi}(X,\o +i\partial\bar\partial v_k)$ we can conclude:
$$d_\chi(u_k,v_k) \leq C M_p \Big( \int_X \chi(u-v_k)\o_{u}^n\Big).$$
Letting $k \to \infty$, by the dominated convergence theorem we arrive at the desired estimate.
\end{proof}

Monotone sequences in $\mathcal E_{\tilde \chi}(X,\o)$ converge with respect to $d_\chi$. Using the last lemma, the proof of this result is the same as \cite[Proposition 6.11]{d}:

\begin{proposition} \label{Mdist_est_gen}
If $\{w_k\}_{k \in \Bbb N} \subset \mathcal E_{\tilde \chi}(X,\o)$ decreases (increases a.e.) to $w \in \mathcal E_{\tilde \chi}(X,\o)$ then $d_\chi(w_k,w)\to 0$.
\end{proposition}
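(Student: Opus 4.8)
The plan is to reduce everything to the immediately preceding lemma, which for comparable potentials $u \le v$ in $\mathcal E_{\tilde \chi}(X,\o)$ supplies the one-sided bound $d_\chi(u,v) \le C M_p\big(\int_X \chi(v-u)\,\o_u^n\big)$. Since $M_p$ is continuous with $M_p(0)=0$, in each monotone case it suffices to prove that the relevant integral tends to $0$. The two cases behave quite differently, the crucial distinction being that the reference measure is fixed in one and varies in the other.

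First I would treat the decreasing case. If $w_k \searrow w$, then $w \le w_k$, so applying the preceding lemma with $u=w$ and $v=w_k$ yields $d_\chi(w_k,w) \le C M_p\big(\int_X \chi(w_k-w)\,\o_w^n\big)$, where the measure $\o_w^n$ is now independent of $k$. Here $w_k - w \searrow 0$ pointwise, hence $\chi(w_k-w)\searrow 0$ pointwise, and the integrand is dominated by $\chi(w_1-w)$, which lies in $L^1(\o_w^n)$ because $w,w_1\in\mathcal E_{\tilde \chi}(X,\o)$ are comparable (this finiteness is exactly what \cite[Lemma 3.5]{gz} provides). The dominated convergence theorem then forces $\int_X \chi(w_k-w)\,\o_w^n \to 0$, and the displayed bound gives $d_\chi(w_k,w)\to 0$.

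The increasing case is the main obstacle, precisely because the natural estimate now carries a moving measure. If $w_k \nearrow w$ a.e., applying the preceding lemma with $u=w_k$ and $v=w$ gives $d_\chi(w_k,w)\le C M_p\big(\int_X \chi(w-w_k)\,\o_{w_k}^n\big)$, and here $\o_{w_k}^n$ depends on $k$, so a direct dominated-convergence argument against a single measure is unavailable; moreover the monotonicity extracted from \cite[Lemma 3.5]{gz} controls the integral at level $k$ only in terms of an earlier level, which runs the wrong way for convergence. To circumvent this I would reinterpret the integral as a weighted \MA energy: setting $\psi_k = w_k - w$ and taking $\o_w$ as reference form, one has $\o_w + i\partial\bar\partial \psi_k = \o_{w_k}$, so that $\int_X \chi(w-w_k)\,\o_{w_k}^n = \int_X \chi(\psi_k)\,(\o_w + i\partial\bar\partial\psi_k)^n$, with $\psi_k \nearrow 0$ in $\mathcal E_{\tilde \chi}(X,\o_w)$.

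The key step is then the continuity of the weighted energy along increasing sequences in $\mathcal E_{\tilde \chi}$: as $\psi_k \nearrow 0$, the quantity $\int_X \chi(\psi_k)\,(\o_w + i\partial\bar\partial\psi_k)^n$ converges to $\int_X \chi(0)\,\o_w^n = 0$. This is where the doubling hypothesis $\chi\in\mathcal W^+_p$, i.e. the growth estimate \eqref{GrowthEst}, is essential, as it is what makes the weighted energy well behaved under monotone limits in the sense of \cite[Section 3]{gz}. Granting this, $M_p\big(\int_X \chi(w-w_k)\,\o_{w_k}^n\big)\to 0$ and hence $d_\chi(w_k,w)\to 0$, finishing the increasing case. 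I expect the only genuinely delicate point to be this monotone continuity of the weighted energy for the varying measures $\o_{w_k}^n$; everything else is a routine combination of the preceding lemma, Proposition \ref{NormIntegralEst}, and \cite[Lemma 3.5]{gz}.
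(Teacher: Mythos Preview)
Your reduction via the preceding lemma is exactly the paper's route: it simply records that, once that lemma is available, the proof is the same as \cite[Proposition 6.11]{d}. Your treatment of the decreasing case is correct.

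In the increasing case there is one imprecision worth flagging. You rewrite $\int_X \chi(w-w_k)\,\o_{w_k}^n$ as the $\tilde\chi$-energy of $\psi_k = w_k - w \nearrow 0$ relative to the reference form $\o_w$, and then invoke monotone continuity of the weighted energy from \cite[Section 3]{gz}. Those results, however, are stated for a smooth K\"ahler reference form, and $\o_w$ is in general only a closed positive current when $w \in \mathcal E_{\tilde\chi}(X,\o)$; the class ``$\mathcal E_{\tilde\chi}(X,\o_w)$'' is not literally covered by \cite{gz} as written. The conclusion $\int_X \chi(w-w_k)\,\o_{w_k}^n \to 0$ is correct, but it needs a justification that stays with the smooth reference $\o$: for instance a cutoff argument in which on $\{w_k > -L\}$ one works with the bounded truncations $w_k^L=\max(w_k,-L)$, $w^L=\max(w,-L)$ and uses Bedford--Taylor convergence, while on $\{w_k \le -L\}$ one dominates $\chi(w-w_k) \le \chi(w_k)$ (after normalizing $w \le 0$) and uses the monotonicity from \cite[Lemma 3.5]{gz} together with an auxiliary faster-growing weight to make the tail small uniformly in $k$. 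You are right that this is the only delicate step; apart from the appeal to a non-smooth reference form, your outline coincides with the paper's intended argument.
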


As pointed out in \cite[Section 7]{d}, formula \eqref{ChiDistGeodFormula} fails already when $u_0,u_1$ are Lipschitz. Following \cite[Theorem 7.2]{d}, an extension is nevertheless possible for $u_0,u_1 \in \mathcal H_\Delta$. For this, first we have to adapt a theorem of Berndtsson \cite{br2} to our setting.

\begin{lemma} \label{pushforward_lemma}Suppose $u_0,u_1 \in \mathcal H_\Delta$ and $(0,1) \ni t \to u_t \in \mathcal H_\Delta$ is the geodesic connecting them, as guaranteed by \cite[Corollary 4.7]{bd} (see also \cite{h}). Then for any $f \in C(\Bbb R)$ the following holds:
\begin{equation}\label{lengthequal}
\int_X f(\dot u_0)\o_{u_0}^n=\int_X f(\dot u_t)\o_{u_t}^n, \ t \in [0,1].
\end{equation}
\end{lemma}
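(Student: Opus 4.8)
The plan is to deduce \eqref{lengthequal} by approximation from the already-known smooth case. Recall that for endpoints in $\calH$ the pushforward identity is exactly Berndtsson's theorem quoted just before Remark \ref{chilengthgeodremark}: if $w_0,w_1\in\calH$ and $t\to w_t$ is the weak geodesic joining them, then $\int_X f(\dot w_t)\o_{w_t}^n$ is independent of $t$. So I would first choose, using \cite{bk}, smooth sequences $u_0^j,u_1^j\in\calH$ decreasing pointwise to $u_0,u_1$, and let $t\to u_t^j\in\calH_\Delta$ be the weak geodesics joining them. The observation that organizes the whole limit is that for each fixed $j$ the quantity $\int_X f(\dot u_t^j)\o_{u_t^j}^n$ does not depend on $t$; hence, if I can show that for each fixed $s\in[0,1]$
\[
\int_X f(\dot u_s^j)\o_{u_s^j}^n \longrightarrow \int_X f(\dot u_s)\o_{u_s}^n \quad (j\to\infty),
\]
then the limit $\int_X f(\dot u_s)\o_{u_s}^n$ is automatically independent of $s$, which is precisely \eqref{lengthequal}. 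This reduces everything to a single fixed-time convergence statement and, conveniently, removes the need to single out the initial time $t=0$.

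Next I would assemble the three analytic inputs needed to pass to the limit at a fixed time $s$. First, the geodesics converge: since the endpoint data decrease, the comparison principle forces $u_t^j$ to decrease in $j$, and by the maximality characterization of weak geodesics (\cite{bd}) the decreasing limit $\lim_j u_t^j$ is the weak geodesic $t\to u_t$ joining $u_0,u_1$. Second, the velocities are uniformly bounded: by the observation of Berndtsson used in Lemma \ref{geod_tangent_limit} (see \cite{br}) there is a constant $C$ with $\|\dot u_s^j\|_{L^\infty(X)},\|\dot u_s\|_{L^\infty(X)}\le C$ for all $j,s$, so it suffices to control $\dot u_s^j$ on the fixed interval $[-C,C]$, where the given $f\in C(\RR)$ is bounded and uniformly continuous. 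Third, the measures converge: since $u_s^j\searrow u_s$ are bounded $\o$-plurisubharmonic functions, Bedford--Taylor theory gives $\o_{u_s^j}^n\to\o_{u_s}^n$ weakly.

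The heart of the matter is the convergence of the velocities $\dot u_s^j\to\dot u_s$ at the fixed time $s$. For each $x$ the function $t\to u_t(x)$ is convex, being the pointwise decreasing limit of the convex functions $t\to u_t^j(x)$, and I would invoke the elementary fact that the derivatives of a pointwise-convergent sequence of convex functions converge at every point of differentiability of the limit; thus $\dot u_s^j(x)\to\dot u_s(x)$ whenever $t\to u_t(x)$ is differentiable at $s$. The main obstacle is then measure-theoretic and twofold. First, I must know that this differentiability holds for $\o_{u_s}^n$--a.e. $x$: the non-differentiability set has countable slices in $t$, so a Fubini argument yields this for Lebesgue--a.e. $s$, and the fixed value of $s$ is reached using the reparametrization invariance of the geodesic equation (restricting to $[s,1]$ again produces $\calH_\Delta$ endpoints) together with the envelope description \eqref{WeakGeodEnvID}--\eqref{SublevelSetId} of the sub-level sets $\{\dot u_s\ge\tau\}$, which are stable under decreasing limits by the partition formula \eqref{MA_formula}. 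Second, having a.e. convergence of the uniformly bounded integrands $f(\dot u_s^j)$ against only weakly convergent measures $\o_{u_s^j}^n$, I must still justify passing to the limit in the integral; this I would carry out by an Egorov-type argument exploiting the uniform $L^\infty$ bound, or equivalently by reducing to the $t$--independence of the distribution functions $\tau\mapsto\o_{u_s}^n(\{\dot u_s\ge\tau\})$ supplied by \eqref{SublevelSetId}. Reconciling the pointwise behaviour of the velocities with the weak convergence of the Monge--Amp\`ere measures is where I expect the genuine difficulty to lie.
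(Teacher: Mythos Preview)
Your approximation scheme is a natural first instinct, but it runs into a real obstacle at exactly the point you flag, and the paper in fact bypasses approximation entirely.

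The paper's argument is direct. One reduces to $t=1$, normalizes $f(0)=0$, approximates $f$ by $C^1$ functions, and then proves separately
\[
\int_{\{\dot u_0>0\}} f(\dot u_0)\,\o_{u_0}^n=\int_{\{\dot u_1>0\}} f(\dot u_1)\,\o_{u_1}^n,
\qquad
\int_{\{\dot u_0<0\}} f(\dot u_0)\,\o_{u_0}^n=\int_{\{\dot u_1<0\}} f(\dot u_1)\,\o_{u_1}^n.
\]
Each of these follows from the layer-cake formula $\int_{\{\dot u_0>0\}} f(\dot u_0)\,\o_{u_0}^n=\int_0^\infty f'(\tau)\,\o_{u_0}^n(\{\dot u_0\ge\tau\})\,d\tau$ together with the sublevel identity \eqref{SublevelSetId} and the partition formula \eqref{MA_formula}, which give $\o_{u_0}^n(\{\dot u_0\ge\tau\})=\o_{u_1}^n(\{\dot u_1\ge\tau\})$ directly for $\mathcal H_\Delta$ data. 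No smooth approximation of the endpoints is used.

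In your scheme, the step ``a.e.\ convergence of the uniformly bounded integrands $f(\dot u_s^j)$ against weakly convergent $\o_{u_s^j}^n$ implies convergence of the integrals'' is not justified, and an Egorov argument does not repair it: Egorov gives uniform convergence off a set of small \emph{measure}, while controlling a weakly convergent family of Monge--Amp\`ere measures requires smallness in \emph{capacity}. You would need quasi-continuity of $\dot u_s$ and uniform-in-$j$ convergence outside sets of small capacity, neither of which you have established. Your alternative---reducing to the $t$-independence of $\tau\mapsto\o_{u_s}^n(\{\dot u_s\ge\tau\})$ via \eqref{SublevelSetId}---is precisely the paper's method; once you invoke it, it proves the lemma outright and the approximation superstructure becomes unnecessary. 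So either close the measure-theoretic gap with a genuine capacity argument, or drop the approximation and argue directly from the layer-cake and sublevel-set identities as the paper does.
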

\begin{proof} We need to prove the desired identity only for $t=1.$ By translation we can assume that $f(0)=0$. Using approximation, we can also assume that $f \in C^1(\Bbb R)$. To obtain \eqref{lengthequal} for $t=1$ we have to prove the following two formulas:
\begin{equation}\label{lengthequal1}\int_{\{\dot u_0 >0\}} f(\dot u_0)\o_{u_0}^n=\int_{\{\dot u_1 > 0\}} f(\dot u_1)\o_{u_1}^n,
\end{equation}
\begin{equation}\label{lengthequal2} \int_{\{\dot u_0 <0\}} f(\dot u_0)\o_{u_0}^n=\int_{\{\dot u_1 < 0\}} f(\dot u_1)\o_{u_1}^n.
\end{equation}
As $\int_{\{\dot u_0 >0\}} f(\dot u_0)\o_{u_0}^n=\int_0^{\infty} f'(\tau)\o_{u_0}^n(\{{\dot u_0 \geq \tau}\})d\tau$, one can see that the same calculation that gave \cite[formulas (46),(47)]{d} (which deals with the particular case $f(l)=l^2$) also gives \eqref{lengthequal1} and \eqref{lengthequal2}.
\end{proof}

\begin{lemma} \label{distgeod_general} Suppose $u_0,u_1 \in \mathcal H_\Delta$ and $(0,1) \ni t \to u_t \in \mathcal H_\Delta$ is the geodesic connecting them. Then we have:
\begin{equation}\label{distgeod_formula}
d_\chi(u_0,u_1) = \|\dot u_t\|_{\chi,u_t}, \  t \in [0,1].
\end{equation}
\end{lemma}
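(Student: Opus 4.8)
The plan is to first reduce the claim to evaluating the right-hand side at a single convenient time, and then to pass from the smooth case (Theorem \ref{ChiChenTheorem}) to the case $u_0,u_1\in\mathcal H_\Delta$ by a two-stage monotone approximation, applying Lemma \ref{geod_tangent_limit} once for each endpoint. The starting observation is that $t\mapsto\|\dot u_t\|_{\chi,u_t}$ is constant, exactly as in Remark \ref{chilengthgeodremark}: setting $N=\|\dot u_0\|_{\chi,u_0}$ and applying Lemma \ref{pushforward_lemma} to $f=\chi(\cdot/N)$ gives $\int_X\chi(\dot u_t/N)\o_{u_t}^n=\chi(1)$ for every $t$, so $\|\dot u_t\|_{\chi,u_t}=N$ by \eqref{OrliczNormId}. (The degenerate case $N=0$, in which Lemma \ref{pushforward_lemma} forces $\dot u_t=0$ $\o_{u_t}^n$-a.e. for all $t$, is handled directly.) Hence it suffices to prove $d_\chi(u_0,u_1)=\|\dot u_t\|_{\chi,u_t}$ for one value of $t$, and I am free to read the speed at whichever endpoint is convenient.

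The key intermediate step is the semi-smooth case $u_0\in\mathcal H_\Delta$, $u_1\in\mathcal H$. I would choose $u_0^k\in\mathcal H$ with $u_0^k\searrow u_0$ and let $\gamma^k$ be the weak geodesic of \eqref{BVPGeod} joining the smooth potentials $u_0^k,u_1$; by Theorem \ref{ChiChenTheorem}, $d_\chi(u_0^k,u_1)=\|\dot\gamma^k_1\|_{\chi,u_1}$, where the base measure $\o_{u_1}^n$ is now fixed. Reversing time (viewing $\gamma^k$ as a geodesic from $u_1$ to $u_0^k$, and the geodesic $u_t$ from $u_1$ to $u_0$) and using that $\chi$ is even, Lemma \ref{geod_tangent_limit} applied with the fixed base point $u_1$ and the decreasing data $u_0^k\searrow u_0$ yields $\|\dot\gamma^k_1\|_{\chi,u_1}\to\|\dot u_1\|_{\chi,u_1}$, where $u_t$ is the geodesic joining $u_0,u_1$ (which exists in $\mathcal H_\Delta$ since $\mathcal H\subset\mathcal H_\Delta$). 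On the other hand $d_\chi(u_0^k,u_1)\to d_\chi(u_0,u_1)$ by the definition \eqref{EchiDistDef}. Combining these with the constancy established above gives $d_\chi(u_0,u_1)=\|\dot u_1\|_{\chi,u_1}=\|\dot u_t\|_{\chi,u_t}$.

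For general $u_0,u_1\in\mathcal H_\Delta$ I would then approximate only the second endpoint, taking $u_1^j\in\mathcal H$ with $u_1^j\searrow u_1$ and denoting by $w^j$ the geodesic joining $u_0,u_1^j$. The semi-smooth case just proved, read at $t=0$ via the constancy of the speed, gives $d_\chi(u_0,u_1^j)=\|\dot w^j_0\|_{\chi,u_0}$, and now the base point $u_0$ is the fixed one. Lemma \ref{geod_tangent_limit} with fixed base $u_0$ and $u_1^j\searrow u_1$ gives $\|\dot w^j_0\|_{\chi,u_0}\to\|\dot u_0\|_{\chi,u_0}$, while Proposition \ref{Mdist_est_gen} together with the triangle inequality gives $d_\chi(u_0,u_1^j)\to d_\chi(u_0,u_1)$. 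This yields $d_\chi(u_0,u_1)=\|\dot u_0\|_{\chi,u_0}=\|\dot u_t\|_{\chi,u_t}$, completing the argument.

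The main obstacle I anticipate is the bookkeeping of the base measure. Lemma \ref{geod_tangent_limit} only controls the limit of the Orlicz speeds when the base potential, hence the measure $\o_{v_0}^n$ against which the norm is computed, is held fixed, since its proof rests on the pointwise monotone convergence $\dot v^j_0\searrow\dot v_0$ and dominated convergence through Proposition \ref{NormIntegralEst}. Approximating both endpoints simultaneously would allow the velocities and the base measure to move at once, breaking this argument; the remedy is precisely the two-stage scheme above, in which each passage to the limit freezes the relevant endpoint, while the pushforward identity of Lemma \ref{pushforward_lemma} guarantees that the value so obtained is independent of which endpoint we chose to freeze.
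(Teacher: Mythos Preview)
Your proposal is correct and follows essentially the same approach as the paper: a two-stage monotone approximation of the endpoints, invoking Lemma \ref{geod_tangent_limit} once per stage with the base endpoint frozen, and using the pushforward identity (Lemma \ref{pushforward_lemma}) to transfer the speed between endpoints. The only cosmetic difference is that you front-load the constancy of $t\mapsto\|\dot u_t\|_{\chi,u_t}$, whereas the paper uses the pushforward lemma mid-proof to switch from $t=0$ to $t=1$ and then appeals to symmetry and the subarc property at the end.
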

\begin{proof} As usual, let $u^k_0,u^k_1 \in \mathcal H$ be sequences of potentials decreasing to $u_0,u_1$. Let $(0,1) \ni t \to u^{kl}_t \in \mathcal H_\Delta$ be the geodesic joining $u_0^k,u^l_1$. By (\ref{distgeod})
$$d_\chi(u_0^k,u^l_1) = \|\dot {u^{kl}_0}\|_{\chi,u_0^k}.$$
If we let $l \to \infty$, by Lemma \ref{geod_tangent_limit} and Proposition \ref{Mdist_est_gen} we obtain that
$$d_\chi(u_0^k,u_1) = \|\dot {u^{k}_0}\|_{\chi,u_0^k},$$
where $(0,1) \ni t \to u^k_t \in \mathcal H_\Delta$ is the geodesic connecting $u^k_0$ with $u_1$. Using the previous lemma we can write:
$$\chi(1)=\int_X \chi \Big( \frac{\dot u^k_0}{\| {\dot u^k_0}\|_{\chi, u^k_0}}\Big)\o_{u^k_0}^n = \int_X \chi \Big( \frac{\dot u^k_1}{\| {\dot u^k_0}\|_{\chi, u^k_0}}\Big)\o_{u^k_1}^n.$$
Hence,
$$d_\chi(u_0^k,u_1) =\|\dot {u^{k}_0}\|_{\chi,u_0} =\|\dot {u^{k}_1}\|_{\chi,u_1}.$$
Letting $k \to \infty$, another application of Lemma \ref{geod_tangent_limit} yields (\ref{distgeod_formula}) for $t=1$. The case $t=0$ follows by symmetry, and for $0 < t < 1$ the result follows because a subarc of a geodesic is again a geodesic.
\end{proof}

With this last lemma under our belt, we can prove that $P(\cdot,\cdot)$ is a contraction in both components with respect to $d_\chi$, which is the analog of \cite[Proposition 8.2]{d}:

\begin{proposition} \label{contractivity} Given $\chi \in \mathcal W^+_p$ and $u,v,w \in \mathcal E_{\tilde \chi}(X,\o)$ we have
$$d_\chi (P(u,v),P(u,w)) \leq d_\chi(v,w).$$
\end{proposition}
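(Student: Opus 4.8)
The plan is to reduce to the case $u,v,w \in \mathcal H$ and then to compare the image under $P(u,\cdot)$ of the weak geodesic joining $v$ and $w$ against that geodesic itself, the whole comparison being driven by the partition formula \eqref{MA_formula}. For the reduction, pick decreasing sequences $u_k,v_k,w_k \in \mathcal H$ with $u_k \searrow u$, $v_k \searrow v$, $w_k \searrow w$. Then $P(u_k,v_k)\searrow P(u,v)$ and $P(u_k,w_k)\searrow P(u,w)$, so Proposition \ref{Mdist_est_gen} together with the triangle inequality yields $d_\chi(P(u_k,v_k),P(u_k,w_k)) \to d_\chi(P(u,v),P(u,w))$ and $d_\chi(v_k,w_k)\to d_\chi(v,w)$. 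Hence it is enough to treat $u,v,w\in\mathcal H$, where moreover $P(u,v),P(u,w)\in\mathcal H_\Delta$.

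Fix then $u,v,w\in\mathcal H$ and let $(0,1)\ni t\to\gamma_t\in\mathcal H_\Delta$ be the weak geodesic joining $v=\gamma_0$ and $w=\gamma_1$. I would introduce the comparison curve $\beta_t := P(u,\gamma_t)\in\mathcal H_\Delta$, which runs from $P(u,v)$ to $P(u,w)$. Since $\gamma$ is a $d_\chi$--geodesic, Theorem \ref{ChiChenTheorem} gives $\|\dot\gamma_t\|_{\chi,\gamma_t}=d_\chi(v,w)$ for all $t$, so it suffices to bound the speed of $\beta$ by that of $\gamma$, namely to prove
$$\|\dot\beta_t\|_{\chi,\beta_t}\le\|\dot\gamma_t\|_{\chi,\gamma_t},\qquad t\in(0,1).$$
Indeed, this gives $d_\chi(P(u,v),P(u,w))\le l_\chi(\beta)\le\int_0^1\|\dot\gamma_t\|_{\chi,\gamma_t}\,dt=d_\chi(v,w)$.

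The speed bound is where the partition formula enters. Fixing $t$ and applying \eqref{MA_formula} to $\beta_t=P(u,\gamma_t)$ gives
$$\o_{\beta_t}^n=\mathbbm{1}_{\{u=\beta_t\}}\o_u^n+\mathbbm{1}_{\{\gamma_t=\beta_t\}\setminus\{u=\beta_t\}}\o_{\gamma_t}^n.$$
The crucial pointwise fact is that $\dot\beta_t$ vanishes ($\o_u^n$--a.e.) on the contact set $\{u=\beta_t\}$, where $\beta$ is locally frozen at $u$, while $\dot\beta_t=\dot\gamma_t$ on $\{\gamma_t=\beta_t\}$, where $\beta$ follows $\gamma$. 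Granting this, for every $r>0$ one has, using $\chi(0)=0$ on the first piece,
$$\int_X\chi\Big(\frac{\dot\beta_t}{r}\Big)\o_{\beta_t}^n=\int_{\{\gamma_t=\beta_t\}\setminus\{u=\beta_t\}}\chi\Big(\frac{\dot\gamma_t}{r}\Big)\o_{\gamma_t}^n\le\int_X\chi\Big(\frac{\dot\gamma_t}{r}\Big)\o_{\gamma_t}^n.$$
Thus every $r$ admissible for $\dot\gamma_t$ in \eqref{OrliczNormDef} is admissible for $\dot\beta_t$, which gives $\|\dot\beta_t\|_{\chi,\beta_t}\le\|\dot\gamma_t\|_{\chi,\gamma_t}$, as needed.

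The main obstacle is exactly the rigorous justification of this pointwise derivative description: $t\mapsto\beta_t$ is only $C^{1,\bar1}$ in the space variable, and its regularity in $t$ is delicate, so one must make sense of $\dot\beta_t$ and identify it on each piece of the partition. I would control this through the envelope identities \eqref{WeakGeodEnvID}--\eqref{SublevelSetId} together with the $t$--convexity of $x\mapsto\gamma_t(x)$, which pin down the one-sided $t$--derivatives of $\beta_t$ and simultaneously legitimize treating $l_\chi(\beta)$ as an upper bound for $d_\chi(\beta_0,\beta_1)$ in the metric sense. A secondary technical point is that \eqref{MA_formula}, stated for arguments in $\mathcal H$, is here applied to the $\mathcal H_\Delta$--argument $\gamma_t$; this extension follows from the same Bedford--Taylor analysis of envelopes, or may be bypassed by first establishing the speed bound at the endpoint $t=0$ (where $\gamma_0=v\in\mathcal H$) and propagating it along subarcs.
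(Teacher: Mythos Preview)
Your reduction to $u,v,w\in\mathcal H$ is fine, and the instinct to exploit the partition formula \eqref{MA_formula} is correct. The gap is in the implementation via the projected curve $\beta_t=P(u,\gamma_t)$.

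First, $\beta$ is not a smooth curve in $\mathcal H$; it lives only in $\mathcal H_\Delta$. The inequality $d_\chi(\beta_0,\beta_1)\le l_\chi(\beta)$ therefore does not follow from the definition of $d_\chi$, and nothing in the paper supplies such an inequality for non-smooth curves. Your proposed fix of ``legitimizing $l_\chi(\beta)$ as an upper bound in the metric sense'' would essentially require showing $d_\chi(\beta_s,\beta_t)\le |s-t|\,d_\chi(v,w)$, which is the very contractivity statement you are after, so the argument becomes circular. Second, the ``crucial pointwise fact'' about $\dot\beta_t$ is not justified: the envelope $P(u,\gamma_t)$ is a global operation, the contact sets move with $t$, and $t\mapsto\beta_t(x)$ is at best Lipschitz, so $\dot\beta_t$ exists only for a.e.\ $t$ and the exceptional set may depend on $x$. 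Identities \eqref{WeakGeodEnvID}--\eqref{SublevelSetId} describe the geodesic $\gamma$, not the curve $\beta$, and do not resolve either difficulty.

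The paper avoids both problems by comparing not $\beta$ but the \emph{actual weak geodesic} $\psi_t$ joining $P(u,v)$ and $P(u,w)$: since $\psi$ is a geodesic, Lemma \ref{distgeod_general} gives $d_\chi(P(u,v),P(u,w))=\|\dot\psi_0\|_{\chi,P(u,v)}$ directly, so no curve-length estimate is needed and $\dot\psi_0$ is well defined. The partition formula is then applied at $t=0$, but this only yields the desired comparison $\|\dot\psi_0\|\le\|\dot\phi_0\|$ when $v\le w$ (so that $\dot\psi_0,\dot\phi_0\ge0$ and the maximum principle gives $\dot\psi_0\le\dot\phi_0$ on the $v$-contact set). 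The general case is handled by first decomposing through $P(v,w)$ and $P(u,v,w)$ via the Pythagorean identity of Proposition \ref{pythagorean}, which reduces everything to two ordered comparisons. This extra reduction step is the piece your outline is missing.
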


Before we can give the proof, we need to generalize the Pythagorean formula of \cite[Proposition 8.1]{d}.

\begin{proposition} \label{pythagorean} Given $u_0,u_1 \in \mathcal H_\Delta$, we have $P(u_0,u_1)\in \mathcal H_\Delta$. Let $(0,1) \ni t \to u_t,v_t,w_t \in \mathcal H_\Delta$ be the weak geodesics joining $(u_0,u_1)$, $(u_0,P(u_0,u_1))$ and $(u_1,P(u_0,u_1))$ respectively. Then we have
\begin{equation}\label{pythagorean_formula}
\int_X f(\dot u_t) \o_{u_t}^n = \int_X f(\dot v_l) \o_{v_l}^n + \int_X f(\dot w_h) \o_{w_h}^n, \ t,l,h \in [0,1],
\end{equation}
where $f \in C(\Bbb R)$ is arbitrary with $f(0)=0$.
\end{proposition}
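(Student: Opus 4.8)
The plan is to reduce the identity \eqref{pythagorean_formula} to a pair of set-theoretic identities for the sublevel sets of the geodesic velocities, which in turn follow from a single translation property of the rooftop envelope. Throughout write $P:=P(u_0,u_1)$; that $P\in\mathcal H_\Delta$ is the envelope regularity of \cite{dr} applied to bounded-Laplacian data, and the geodesics between points of $\mathcal H_\Delta$ together with the envelope characterizations \eqref{WeakGeodEnvID}--\eqref{SublevelSetId} are available at this regularity by \cite{bd} and \cite{d,dr}. By Lemma~\ref{pushforward_lemma} each of the three integrals in \eqref{pythagorean_formula} is independent of its parameter, so it suffices to evaluate the left-hand side and the $v$-term at the $u_0$-endpoint and the $w$-term at the $u_1$-endpoint. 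To fix orientations (which matters for odd $f$, though not for even weights such as the Mabuchi case $f(l)=l^2$ of \cite{d}) I parametrize along the broken path $u_0\to P\to u_1$, taking $v_0=u_0,\ v_1=P$ and $w_0=P,\ w_1=u_1$. Since $\dot u_0,\dot v_0,\dot w_1$ are bounded (Berndtsson, cf. the proof of Lemma~\ref{geod_tangent_limit}) I may also assume $f\in C^1$ and recover general continuous $f$ by uniform approximation.

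The analytic heart is the translation identity: for every $c\ge0$,
\[
P(u_0,u_1+c)=P(u_0,P+c),\qquad P(u_1,u_0+c)=P(u_1,P+c).
\]
I would prove the first one by two inequalities. Since $P\le u_1$, the family defining $P(u_0,P+c)$ is contained in that defining $P(u_0,u_1+c)$, giving one inequality. For the reverse, $P(u_0,u_1+c)-c$ is $\o$-psh and lies below both $u_0$ and $u_1$ (using $c\ge0$), hence below $P$; thus $P(u_0,u_1+c)\le P+c$, so $P(u_0,u_1+c)$ itself competes in the envelope defining $P(u_0,P+c)$. The second identity is the same computation with $u_0,u_1$ interchanged, using $P(u_0,u_1)=P(u_1,u_0)$.

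Next I would feed this into the envelope description \eqref{SublevelSetId} of the velocities. For the geodesic $v$ joining $u_0$ to $P$, \eqref{SublevelSetId} reads $\{\dot v_0\ge\tau\}=\{P(u_0,P-\tau)=u_0\}$: for $\tau>0$ this set is empty, since $P-\tau<u_0$ forces $P(u_0,P-\tau)<u_0$, while for $\tau\le0$ the translation identity turns it into $\{P(u_0,u_1-\tau)=u_0\}=\{\dot u_0\ge\tau\}$. Comparing sublevel sets for all $\tau$ shows, pointwise where the one-sided convex derivatives exist, that $\dot v_0=0$ on $\{\dot u_0\ge0\}$ and $\dot v_0=\dot u_0$ on $\{\dot u_0<0\}$. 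Running the same argument on the reversals $s\mapsto w_{1-s}$ (joining $u_1$ to $P$) and $s\mapsto u_{1-s}$ (joining $u_1$ to $u_0$), together with the second translation identity, gives $\dot w_1=0$ on $\{\dot u_1\le0\}$ and $\dot w_1=\dot u_1$ on $\{\dot u_1>0\}$.

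Finally I would assemble the pieces. Using $f(0)=0$ and the two velocity relations,
\[
\int_X f(\dot v_0)\,\o_{u_0}^n+\int_X f(\dot w_1)\,\o_{u_1}^n
=\int_{\{\dot u_0<0\}} f(\dot u_0)\,\o_{u_0}^n+\int_{\{\dot u_1>0\}} f(\dot u_1)\,\o_{u_1}^n.
\]
The proof of Lemma~\ref{pushforward_lemma} establishes \eqref{lengthequal1}, so the second integral on the right equals $\int_{\{\dot u_0>0\}} f(\dot u_0)\,\o_{u_0}^n$; adding the first and invoking $f(0)=0$ once more collapses the right-hand side to $\int_X f(\dot u_0)\,\o_{u_0}^n$, which is the left-hand side of \eqref{pythagorean_formula} evaluated at $t=0$. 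I expect the main obstacle to be the translation identity together with the careful bookkeeping of orientations and of the empty (resp.\ positive) sublevel sets; the passage from the set identities to the pointwise velocity relations and the concluding appeal to \eqref{lengthequal1} are then routine.
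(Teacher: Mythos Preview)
Your proof is correct and takes essentially the same route as the paper: both reduce via Lemma~\ref{pushforward_lemma} to an endpoint computation and use the sublevel-set identity \eqref{SublevelSetId} to match the positive and negative parts of $\dot u$ against the velocities of $w$ and $v$. Where the paper invokes the layer-cake representation $\int_{\{\dot u_0>0\}}f(\dot u_0)\o_{u_0}^n=\int_0^\infty f'(\tau)\,\o_{u_0}^n(\{\dot u_0\ge\tau\})\,d\tau$ and defers the remaining computation to \cite[formulas (49),(50)]{d}, you make the mechanism explicit through the translation identity $P(u_0,u_1+c)=P(u_0,P+c)$ for $c\ge0$ and upgrade the conclusion to the pointwise relation $\dot v_0=\min(\dot u_0,0)$ (and symmetrically for $w$); this is a clean refinement of the same argument rather than a different one. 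Your care with the orientation of $w$ is also well placed: with the parametrization in the statement ($w_0=u_1$, $w_1=P$) the identity \eqref{pythagorean_formula} actually fails for odd $f$ (test $f(l)=l$, where the two sides become $AM(u_1)-AM(u_0)$ versus $2AM(P)-AM(u_0)-AM(u_1)$), so your reparametrization $w_0=P$, $w_1=u_1$ is in fact necessary for the claim to hold for arbitrary $f$; this is immaterial for the paper's applications since all weights $\chi$ used there are even.
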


\begin{proof} The fact that $P(u_0,u_1)\in \mathcal H_\Delta$ follows from \cite{dr}. Using an approximation argument we can assume that $f \in C^1(\Bbb R)$. To justify \eqref{pythagorean_formula}, by Lemma \ref{pushforward_lemma} it is enough to show that
$$\int_{\{\dot u_0 > 0\}} f(\dot u_0) \o_{u_0}^n = \int_X f(\dot w_0) \o_{u_1}^n.$$
$$\int_{\{\dot u_0 < 0 \}} f(\dot u_0) \o_{u_0}^n = \int_X f(\dot v_0) \o_{u_0}^n$$
As $\int_{\{\dot u_0 > 0 \}} f(\dot u_0) \o_{u_0}^n=\int_0^{\infty} f'(\tau)\o_{u_0}^n(\{{\dot u_0 \geq \tau}\})d\tau$, both of the above identities can be proved using the same arguments as in \cite[formulas (49) and (50)]{d}, where the particular case $f(l)=l^2$ is considered.
\end{proof}

\begin{corollary}For $u_0,u_1 \in \mathcal E^p(X,\o)$ we have
\begin{equation}\label{p-pythagorean_formula}
d_p(u_0,u_1)^p = d_p(u_0,P(u_0,u_1))^p + d_p(u_1,P(u_0,u_1))^p.
\end{equation}
Consequently,
\begin{equation}\label{1-pythagorean_formula}
d_1(u_0,u_1) = AM(u_0) + AM(u_1) - 2AM(P(u_0,u_1)).
\end{equation}
For general $\chi \in \mathcal W^+_p$ and $u_0,u_1 \in \mathcal E_{\tilde \chi}(X,\o)$ we have
\begin{flalign}\label{normdecompositionformula}
\frac{1}{2}(d_\chi(u_0,P(u_0,u_1)) + d_\chi(u_1, & P(u_0,u_1))) \leq d_\chi(u_0,u_1) \\
&\leq 2(d_\chi(u_0,P(u_0,u_1)) + d_\chi(u_1,P(u_0,u_1))). \nonumber
\end{flalign}
\end{corollary}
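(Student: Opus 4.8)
The plan is to read all three formulas off the pushforward identity of Proposition~\ref{pythagorean}, specializing the continuous test function $f$ in each case, and to transfer from $\mathcal H_\Delta$ to the finite energy classes by monotone approximation. First I would establish \eqref{p-pythagorean_formula} for $u_0,u_1\in\mathcal H_\Delta$: writing $P=P(u_0,u_1)\in\mathcal H_\Delta$ and letting $u_t,v_t,w_t$ be the geodesics joining $(u_0,u_1)$, $(u_0,P)$, $(u_1,P)$, Proposition~\ref{pythagorean} with $f(l)=|l|^p$ gives $\int_X|\dot u_t|^p\o_{u_t}^n=\int_X|\dot v_l|^p\o_{v_l}^n+\int_X|\dot w_h|^p\o_{w_h}^n$. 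For $\chi=\chi_p$ Lemma~\ref{distgeod_general} says precisely $d_p(\cdot,\cdot)^p=\int_X|\dot\gamma|^p\o_\gamma^n$ along the connecting geodesic (recall $\vol(X)=1$), so this is exactly \eqref{p-pythagorean_formula} on $\mathcal H_\Delta$. To reach $u_0,u_1\in\mathcal E^p(X,\o)$ I take $u_0^k,u_1^k\in\mathcal H$ decreasing to $u_0,u_1$; then $P(u_0^k,u_1^k)\searrow P(u_0,u_1)$, and by Proposition~\ref{Mdist_est_gen} together with the definition \eqref{EchiDistDef} each of the three distances converges, so the identity survives the limit.

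Formula \eqref{1-pythagorean_formula} follows by taking $p=1$, which yields $d_1(u_0,u_1)=d_1(u_0,P)+d_1(u_1,P)$, once I show that $d_1(u,w)=AM(u)-AM(w)$ whenever $w\le u$. On $\mathcal H_\Delta$ the geodesic $\gamma_t$ from $u$ to $w$ is decreasing in $t$, so $d_1(u,w)=\|\dot\gamma_t\|_{1,\gamma_t}=-\int_X\dot\gamma_t\o_{\gamma_t}^n$, a quantity that is $t$-independent by Lemma~\ref{pushforward_lemma} applied to $f(l)=l$; since $AM$ is affine along geodesics with first variation $\frac{d}{dt}AM(\gamma_t)=\int_X\dot\gamma_t\o_{\gamma_t}^n$, integrating gives $AM(w)-AM(u)=-d_1(u,w)$. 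Applying this to the comparable pairs $(u_0,P)$ and $(u_1,P)$ (after approximating and using the $d_1$-continuity of Proposition~\ref{Mdist_est_gen} and the $AM$-continuity of Lemma~\ref{AMcont}) and summing yields \eqref{1-pythagorean_formula}.

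The substantive estimate is \eqref{normdecompositionformula}. Working first on $\mathcal H_\Delta$ (and assuming $u_0\neq u_1$, so that the numbers below are positive), I set $a=d_\chi(u_0,u_1)$, $b=d_\chi(u_0,P)$, $c=d_\chi(u_1,P)$ and apply Proposition~\ref{pythagorean} with the continuous weight $f(l)=\chi(l/a)$, which satisfies $f(0)=0$. By \eqref{OrliczNormId} the left-hand side equals $\chi(1)$, so after dividing by $\chi(1)$ one gets $1=\frac{1}{\chi(1)}\int_X\chi(\dot v_l/a)\o_{v_l}^n+\frac{1}{\chi(1)}\int_X\chi(\dot w_h/a)\o_{w_h}^n$. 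The Orlicz norms of $\dot v_l/a$ and $\dot w_h/a$ are $b/a$ and $c/a$, so Proposition~\ref{NormIntegralEst} squeezes the two summands between $m_p$ and $M_p$ of these ratios. The lower bound forces $m_p(b/a)+m_p(c/a)\le1$; since $m_p(l)=l$ for $l\ge1$ this is impossible unless $b/a,c/a\le1$, i.e. $\max(b,c)\le a$. But then $M_p(b/a)=b/a$ and $M_p(c/a)=c/a$, and the upper bound gives $b/a+c/a\ge1$, i.e. $a\le b+c$. Hence $\max(b,c)\le a\le b+c$, which is stronger than \eqref{normdecompositionformula}, and the passage to $\mathcal E_{\tilde\chi}(X,\o)$ is again by monotone approximation.

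I expect the main obstacle to be precisely this last conversion. Proposition~\ref{pythagorean} is an additive identity for $\chi$-energies, whereas the $d_\chi$ are Orlicz norms; for $p>1$ the exact Pythagorean identity \eqref{p-pythagorean_formula} genuinely degenerates into the two-sided estimate \eqref{normdecompositionformula}, so no equality can be expected. The device that makes it work is to rescale the test function by $a$ so that the left-hand energy collapses to the single value $\chi(1)$, after which the two directions of Proposition~\ref{NormIntegralEst}, combined with the elementary behavior of $m_p$ and $M_p$ across $l=1$, pin the ratios $b/a,c/a$ into $[0,1]$ with sum at least $1$. A secondary point requiring care throughout is verifying that $P(u_0^k,u_1^k)\searrow P(u_0,u_1)$ and that $P(u_0,u_1)\in\mathcal E_{\tilde\chi}(X,\o)$, so that Proposition~\ref{Mdist_est_gen} applies to the envelopes in each limiting argument.
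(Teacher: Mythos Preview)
Your proof is correct and follows the same route as the paper: plug $f=|l|^p$ into Proposition~\ref{pythagorean} for \eqref{p-pythagorean_formula}, derive $d_1(u,w)=AM(u)-AM(w)$ for $w\le u$ from linearity of $AM$ along geodesics to get \eqref{1-pythagorean_formula}, and for \eqref{normdecompositionformula} rescale the test function by $a=d_\chi(u_0,u_1)$ so that the left side collapses to $\chi(1)$, then pass to $\mathcal E_{\tilde\chi}$ by monotone approximation.

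The only notable difference is in the upper bound of \eqref{normdecompositionformula}. The paper observes that one of the two summands must be at least $\chi(1)/2$ and uses convexity of $\chi$ to conclude $\max(b,c)\ge a/2$, hence $a\le 2(b+c)$. You instead feed both summands through the $M_p$ side of Proposition~\ref{NormIntegralEst} and, having already shown $b/a,c/a\le 1$, obtain $1\le b/a+c/a$, i.e.\ $a\le b+c$. Your inequality is sharper by a factor of $2$; note, however, that $a\le b+c$ is nothing more than the triangle inequality for $d_\chi$, so you could simply invoke that and skip the $M_p$ step entirely. Either way the stated estimate with the factor $2$ follows.
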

These results can also be seen as a generalization of the Pythagorean formula \cite[Proposition 8.1]{d} to our setting.
\begin{proof} To begin, we note that is enough to prove each indentity/estimate for $u_0,u_1 \in \mathcal H$, as in each case an approximation procedure yields the general result. To obtain \eqref{p-pythagorean_formula}, one just puts $f = \chi_p$ into the previous proposition.

Formula \eqref{1-pythagorean_formula} follows from the fact that $d_1(u_0,u_1)=AM(u_0) - AM(u_1)$ when $u_0 \geq u_1$. Indeed, the geodesic $t \to u_t$ connecting $u_0,u_1$ satisfies $\dot u_0 \leq 0$, hence $d_1(u_0,u_1)=\| \dot  u_0\|_{1,u_0} = - \int_X \dot u_0 \o_{u_0}^n=AM(u_0) - AM(u_1)$, as the Aubin-Mabuchi energy is linear along geodesics.

Trying to imitate all of this for general $\chi \in \mathcal W^+_p$ we choose $D = d_\chi(u_0,u_1)$ and $f(l)=\chi(l/D)$. From \eqref{pythagorean_formula} it follows that
$$\chi(1) = \int_X \chi \Big(\frac{\dot v_0}{D}\Big) \o_{v_0}^n + \int_X \chi\Big(\frac{\dot w_0}{D}\Big) \o_{w_0}^n,$$
with $t \to v_t$ and $t \to w_t$ as in the previous proposition.
From this identity and Lemma \ref{distgeod_general} it immediately follows that $d_\chi(u_0,P(u_0,u_1)),d_\chi(u_1,P(u_0,u_1)) \leq D$ implying the first estimate in \eqref{normdecompositionformula}. For the second estimate, we observe that at least one of the terms in the above identity is bigger then $\chi(1)/2$. We can assume that this term is the first one and convexity of $\chi$ implies
$$ \int_X \chi \Big(\frac{2\dot v_0}{D}\Big) \o_{v_0}^n\geq \int_X 2\chi \Big(\frac{\dot v_0}{D}\Big) \o_{v_0}^n\geq {\chi(1)},$$
from which it follows that $d_\chi(u_0,P(u_0,u_1)) \geq D/2$, implying the second estimate in \eqref{normdecompositionformula}.
\end{proof}

\begin{proof}[Proof of Proposition \ref{contractivity}]
As the general case follows from an approximation argument via Proposition \ref{Mdist_est_gen}, it is enough to prove the estimate for $u,v,w \in \mathcal H_\Delta$. In this particular case we also have $P(u,v),P(u,w), P(u,v,w) \in \mathcal H_\Delta$ (see \cite{dr}). We digress slightly to prove the following claim central to our argument:
\begin{claim*}Suppose $v,w \in \mathcal H_\Delta$, $v \leq w$ and let $(0,1) \ni t \to \phi_t,\psi_t \in \mathcal H_\Delta$ be the weak geodesic segments joining $v,w$ and $P(u,v),P(u,w)$ respectively. For any increasing non-negative function $f \in C([0,\infty))$ one has
\begin{equation}\label{interm_est}
\int_X f(\dot \psi_l)\o_{\psi_l}^n \leq \int_X f(\dot \phi_h)\o_{\phi_h}^n, \ l,h \in [0,1].
\end{equation}
\end{claim*}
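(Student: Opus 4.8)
The plan is to reduce the integral inequality \eqref{interm_est} to a pointwise comparison of the distribution functions of $\dot\phi_0$ and $\dot\psi_0$, and then to read off that comparison from the contact-set description of superlevel sets in \eqref{SublevelSetId} together with the partition formula \eqref{MA_formula}. First I would observe that, since $v\le w$, the comparison principle forces the weak geodesic $t\to\phi_t$ to be increasing (as in the proof of Lemma~\ref{NaiveCompare}), and likewise $P(u,v)\le P(u,w)$ makes $t\to\psi_t$ increasing; hence $\dot\phi_t\ge 0$ and $\dot\psi_t\ge 0$. By Lemma~\ref{pushforward_lemma} both sides of \eqref{interm_est} are independent of the time parameter, so it suffices to treat $l=h=0$. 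After approximating $f$ by increasing $C^1$ functions and normalizing $f(0)=0$ (legitimate since $\o_{\psi_0}^n$ and $\o_{\phi_0}^n$ have the same total mass), the layer-cake formula turns the desired inequality into the first-order stochastic dominance
$$\o_{\psi_0}^n(\{\dot\psi_0\ge\tau\})\le \o_{\phi_0}^n(\{\dot\phi_0\ge\tau\}),\qquad \tau>0.$$

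Next I would translate both superlevel sets into contact sets of envelopes via \eqref{SublevelSetId}. Writing $p:=P(u,v)$, $g_\tau:=P(v,w-\tau)$ and $G_\tau:=P(P(u,v),P(u,w)-\tau)$, this identity gives $\{\dot\phi_0\ge\tau\}=\{g_\tau=v\}=:A_\tau$ and $\{\dot\psi_0\ge\tau\}=\{G_\tau=p\}=:B_\tau$, so that $\o_{\phi_0}^n(\{\dot\phi_0\ge\tau\})=\o_v^n(A_\tau)$ and $\o_{\psi_0}^n(\{\dot\psi_0\ge\tau\})=\o_p^n(B_\tau)$. The dominance above thus becomes the measure comparison $\o_p^n(B_\tau)\le\o_v^n(A_\tau)$.

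The heart of the argument, and the step I expect to be the main obstacle, is this last inequality. I would extract two elementary envelope facts: $G_\tau\le P(u,w)-\tau\le u-\tau$, and $G_\tau\le v$, $G_\tau\le w-\tau$, whence $G_\tau\le g_\tau$. Expanding $\o_p^n(B_\tau)$ by the partition formula \eqref{MA_formula} for $p=P(u,v)$ splits it into an $\o_u^n$-piece over $B_\tau\cap\{p=u\}$ and an $\o_v^n$-piece over $B_\tau\cap(\{p=v\}\setminus\{p=u\})$. For $\tau>0$ the first piece is empty, since on $\{p=u\}$ one has $G_\tau\le u-\tau<u=p$, so such points cannot lie in $B_\tau=\{G_\tau=p\}$. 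On the second piece $G_\tau=p=v$ together with $G_\tau\le g_\tau\le v$ forces $g_\tau=v$, i.e. $B_\tau\cap\{p=v\}\subseteq A_\tau$; hence $\o_p^n(B_\tau)\le\o_v^n(A_\tau)$, which is exactly the required dominance.

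The remaining points are routine: one must check that \eqref{MA_formula} and \eqref{SublevelSetId} apply in $\mathcal H_\Delta$ (guaranteed by \cite{dr}), that the contact-set inclusion is needed only up to $\o_v^n$-null sets, and that the $f(0)=0$ normalization and the $C^1$-approximation of $f$ are compatible with the layer-cake identity. I note that specializing to $f(l)=l^2$ recovers the computation behind \cite[formulas (49),(50)]{d}, so the layer-cake reduction is precisely the device that upgrades that special case to arbitrary increasing $f$.
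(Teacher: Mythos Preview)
Your argument is correct, but it is considerably more elaborate than the paper's own proof. The paper also reduces to $l=h=0$ via Lemma~\ref{pushforward_lemma} and also invokes the partition formula \eqref{MA_formula} for $P(u,v)$; however, instead of passing through layer-cake, distribution functions, and the envelope identity \eqref{SublevelSetId}, it argues \emph{pointwise} using only the maximum principle. Concretely: since $\psi_t\le u$ for all $t$ (maximum principle) and $\psi_0=P(u,v)$, on the contact set $\{P(u,v)=u\}$ the curve $t\mapsto\psi_t$ is constant, so $\dot\psi_0=0$ there and the $\o_u^n$-piece vanishes (after the harmless normalization $f(0)=0$). On the other contact set $\{P(u,v)=v\}$ one has $\psi_0=\phi_0=v$ and $\psi_t\le\phi_t$ (maximum principle again), hence $0\le\dot\psi_0\le\dot\phi_0$ pointwise there; monotonicity of $f$ then gives the estimate directly.

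So the two proofs share the same skeleton (reduction to $t=0$, splitting via \eqref{MA_formula}, killing the $u$-contact piece, controlling the $v$-contact piece), but your route encodes the pointwise comparison $\dot\psi_0\le\dot\phi_0$ on $\{P(u,v)=v\}$ implicitly through the envelope inclusion $G_\tau\le g_\tau$ and \eqref{SublevelSetId}, whereas the paper gets it in one line from the maximum principle. Your approach has the merit of making the first-order stochastic dominance explicit and of tying the argument visibly to the structural identities \eqref{SublevelSetId} and \eqref{MA_formula}; the paper's approach buys brevity and avoids the $C^1$-approximation and layer-cake bookkeeping altogether.
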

\begin{proof}
As follows from Lemma \ref{pushforward_lemma} it is enough to prove the above estimate for $l=h =0$. From \eqref{MA_formula} it follows that that
$$\int_X f(\dot{\psi_0}) \o_{P(u,v)}^n\leq \int_{\{P(u,v)=u\}}f(\dot{\psi_0}) \o_{u}^n + \int_{\{P(u,v)=v\}}f(\dot{\psi_0}) \o_{v}^n$$
We argue that the first term in this sum is zero. As $v \leq w$ and $P(u,v) \leq P(u,w)$, it is clear that $t \to \phi_t,\psi_t$ are increasing in $t$. By the maximum principle, it is also clear that $\psi_t \leq u, \ t \in [0,1]$. Hence, if $x \in \{ P(u,v)=u\}$ then $\psi_t(x)=u(x), t \in [0,1]$, implying $\dot \psi_0 \big|_{\{ P(u,v)=u\}}\equiv 0$.

At the same time, using the maximum principle again, it follows that $\psi_t \leq \phi_t, \ t \in [0,1]$. This implies that $0 \leq \dot \psi_0 \big|_{\{ P(u,v)=v\}}\leq\dot \phi_0 \big|_{\{ P(u,v)=v\}},$ which in turn implies (\ref{interm_est}).
\end{proof}
The proof is just an application of Proposition \ref{pythagorean} and the claim just provided. Suppose $(0,1) \ni t \to a_t,b_t,c_t,d_t,e_t,g_t \in \mathcal H_\Delta$ are the weak geodesics joining $(P(u,v),P(u,w))$, $(P(u,v),P(u,v,w))$, $(P(u,w),P(u,v,w))$, $(v,P(v,w))$, $(w,P(v,w))$, and $(v,w)$ respectively. We introduce $D = \| \dot a_0\|_{\chi, P(u,v)}$, which by Lemma \ref{distgeod_general} is equals $d_\chi(P(u,v),P(u,w))$. Using Proposition \ref{pythagorean} we can start writing
\begin{flalign*}
\chi(1) = \int_X \chi\Big( \frac{\dot a_0}{D}\Big) \o_{P(u,v)}^n&= \int_X \chi\Big( \frac{\dot b_0}{D}\Big) \o_{P(u,v)}^n + \int_X \chi\Big( \frac{\dot c_0}{D}\Big) \o_{P(u,w)}^n.
\end{flalign*}
Now we use \eqref{interm_est} for both of the terms in the above sum to continue:
\begin{flalign*}
&\leq \int_X \chi\Big( \frac{\dot d_0}{D}\Big) \o_{v}^n + \int_X \chi\Big( \frac{\dot e_0}{D}\Big) \o_{w}^n\\
&= \int_X \chi\Big( \frac{\dot g_0}{D}\Big) \o_{v}^n,
\end{flalign*}
where in the last line we have used Proposition \ref{pythagorean} again. From the estimate we obtained it follows that $d_\chi(v,w)=\|\dot g_0\|_{v}\geq D = d_\chi(P(u,v),P(u,w))$.
\end{proof}

As an easy computation shows, for $u,v \in \mathcal E^1(X,\o)$ we have
\begin{equation}\label{AM_diff}AM(u)-AM(v)=\frac{1}{n+1} \sum_{j=0}^n\int_{X} (u-v)(\o +i\partial\bar\partial u)^j\wedge (\o + i\partial\bar\partial v)^{n-j}.
\end{equation}
We observe now that the Aubin-Mabuchi energy is Lipschitz continuous with respect to our path length metric:

\begin{lemma} \label{AMcont} Given $u_0,u_1 \in \mathcal E_{\tilde \chi}(X,\o)$, we have $$|AM(u_0) - AM(u_1)| \leq C(p)  d_\chi(u_0,u_1).$$
\end{lemma}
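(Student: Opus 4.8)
The plan is to reduce everything to the $d_1$ metric and then exploit the special additive structure of the $L^1$ geometry together with the fact that the Aubin--Mabuchi energy linearizes distances along monotone configurations.

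First I would observe that it suffices to prove the estimate with $d_1$ in place of $d_\chi$, i.e. $|AM(u_0)-AM(u_1)|\le d_1(u_0,u_1)$. Indeed, as recalled in the proof of Corollary \ref{supcorollary}, every Orlicz gauge norm dominates the $L^1$ norm, so $\|\cdot\|_{1,v}\le\|\cdot\|_{\chi,v}$ pointwise for every $v$; this follows directly from Jensen's inequality applied to the convex weight $\chi$. Passing to curve lengths and infima yields $d_1\le d_\chi$ on $\mathcal E_{\tilde\chi}(X,\o)\subset\mathcal E^1(X,\o)$, so the $\chi$--statement follows once the $d_1$--statement is established, in fact with constant $C(p)=1$.

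Next I would introduce the rooftop envelope $P=P(u_0,u_1)\in\mathcal E^1(X,\o)$, which satisfies $P\le u_0$ and $P\le u_1$. The key input is the identity $d_1(a,b)=AM(a)-AM(b)$, valid whenever $a\ge b$, established in the derivation of \eqref{1-pythagorean_formula} from the linearity of $AM$ along geodesics. Applying it twice gives $d_1(u_0,P)=AM(u_0)-AM(P)$ and $d_1(u_1,P)=AM(u_1)-AM(P)$. Subtracting these two identities expresses the quantity of interest as a difference of nonnegative numbers,
\[
AM(u_0)-AM(u_1)=d_1(u_0,P)-d_1(u_1,P),
\]
whence $|AM(u_0)-AM(u_1)|\le d_1(u_0,P)+d_1(u_1,P)$.

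Finally I would invoke the $p=1$ case of the Pythagorean formula \eqref{p-pythagorean_formula}, which in the $L^1$ geometry is the additive identity $d_1(u_0,P)+d_1(u_1,P)=d_1(u_0,u_1)$. Combining the last two displays with the reduction of the first step gives $|AM(u_0)-AM(u_1)|\le d_1(u_0,u_1)\le d_\chi(u_0,u_1)$. I do not expect any serious analytic obstacle here: all three ingredients (the domination $d_1\le d_\chi$, the additive Pythagorean identity, and the monotone formula $d_1(a,b)=AM(a)-AM(b)$) are already available for $\mathcal E^1$ potentials, and $\mathcal E_{\tilde\chi}\subset\mathcal E^1$. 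The one point to keep straight is that these identities must be applied on $\mathcal E^1$ rather than merely on $\mathcal H$; should one prefer to argue on $\mathcal H$ first, the extension to $\mathcal E_{\tilde\chi}$ follows from a decreasing approximation using continuity of $AM$ under monotone limits together with the continuity of $d_\chi$ from Proposition \ref{Mdist_est_gen}. A slicker alternative, cleanest for $u_0,u_1\in\mathcal H$ and then extended by the same approximation, avoids the envelope entirely: differentiating $AM$ along the weak geodesic $t\to u_t$, the derivative $\frac{d}{dt}AM(u_t)=\int_X\dot u_t\,\o_{u_t}^n$ is constant in $t$ and $d_1(u_0,u_1)=\int_X|\dot u_t|\,\o_{u_t}^n$, so $|AM(u_0)-AM(u_1)|\le d_1(u_0,u_1)$ is immediate from the triangle inequality for integrals.
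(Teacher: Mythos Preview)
Your argument is correct, and in fact yields the sharp constant $C=1$. It is, however, organized differently from the paper's own proof. The paper does not reduce to $d_1$ and does not touch the rooftop envelope or the Pythagorean identity; instead it argues directly with $d_\chi$. Working first with $u_0,u_1\in\mathcal H$ and the weak geodesic $t\mapsto u_t$, it writes
\[
AM(u_1)-AM(u_0)=\int_0^1\int_X\dot u_t\,\o_{u_t}^n\,dt
\]
and applies the Orlicz H\"older inequality \eqref{HolderIneq} for the complementary pair $(\chi,\chi^*)$ to bound each inner integral by $\|1\|_{\chi^*,u_t}\,\|\dot u_t\|_{\chi,u_t}=\|1\|_{\chi^*,u_0}\,d_\chi(u_0,u_1)$; the general case follows by density. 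Your ``slicker alternative'' in the last paragraph is exactly this computation specialized to $\chi_1=|l|$, combined with your reduction $d_1\le d_\chi$.

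What each route buys: the paper's argument is more self-contained --- it uses only H\"older and the linearity of $AM$ along geodesics, none of the structural results on envelopes that appear later in Section~4. Your main argument leans on heavier machinery (the $L^1$ Pythagorean identity \eqref{p-pythagorean_formula} and the monotone formula $d_1(a,b)=AM(a)-AM(b)$), but it makes transparent why the constant is exactly $1$ and why $d_1$ is the ``natural'' metric for controlling $AM$. There is no circularity, since those identities are established in the corollary after Proposition~\ref{pythagorean}, prior to Lemma~\ref{AMcont}, and their extension from $\mathcal H$ to $\mathcal E^1$ uses only monotone continuity of $AM$, not the present lemma.
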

\begin{proof}
By density we can suppose that $u_0,u_1 \in \mathcal H.$ Let $(0,1) \ni t \to u_t \in \mathcal H_\Delta$ be the geodesic connecting $u_0,u_1$. By (\ref{ChiDistGeodFormula}), (\ref{AM_diff}) and the H\"older inequality we have:
\begin{flalign*}
AM(u_1) - AM(u_0)&= \int_0^1 \frac{d AM(u_t)}{dt}dt=\int_0^1 \int_X \dot {u_t}\o_{u_t}^n dt\\
&\leq \int_0^1\|1\|_{\chi^*,u_t} \|\dot u_t\|_{\chi,u_t} dt=\|1\|_{\chi^*,u_0} d_\chi(u_0,u_1),
\end{flalign*}
where we observed that all the norms $\|1\|_{\chi^*,u_t}$ are the same and are equal to $\|1\|_{\chi^*,u_0}$.
\end{proof}

We are ready to prove completeness of $(\mathcal E_\chi(X,\o), d_\chi)$. Roughly, the idea of the proof is to replace an arbitrary Cauchy sequence with an equivalent monotone Cauchy sequence which is much easier to deal with in light of the next result:
\begin{lemma}\label{mononton_seq} Suppose $\{u_k\}_{k \in \Bbb N} \subset \mathcal E_{\tilde \chi}(X,\o)$ is a pointwise decreasing $d_\chi-$bounded sequence. Then $u = \lim_{k \to \infty} u_k \in \mathcal E_{\tilde \chi}(X,\o)$ and additionally $d_\chi(u,u_k) \to 0$.
\end{lemma}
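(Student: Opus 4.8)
The plan is to isolate the only genuinely new content of the statement. Since $\{u_k\}$ is decreasing, once we know that the pointwise limit $u=\lim_k u_k$ lies in $\mathcal{E}_{\tilde\chi}(X,\omega)$, the convergence $d_\chi(u_k,u)\to 0$ is immediate from Proposition \ref{Mdist_est_gen}, which handles monotone sequences whose limit is already known to be in the class. So the whole difficulty is to prove $u\in\mathcal{E}_{\tilde\chi}(X,\omega)$, i.e. that $d_\chi$-boundedness forces a uniform energy bound that survives the decreasing limit.

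First I would quantify the $d_\chi$-bound. Writing $C_0=\sup_k d_\chi(u_0,u_k)<\infty$ (this supremum is finite, and in fact $d_\chi(u_0,u_k)$ is already monotone increasing by Lemma \ref{NaiveCompare}), I apply the lower estimate of Lemma \ref{Mdist_est} to the ordered pair $u_k\le u_0$ to obtain $\|u_0-u_k\|_{\chi,u_k}\le 2^{n+2}C_0$, and then Proposition \ref{NormIntegralEst} converts this into the uniform bound $\int_X\chi(u_0-u_k)\,\omega_{u_k}^n\le \chi(1)M_p(2^{n+2}C_0)=:C_1$, valid for every $k$.

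Next I would read this as an energy bound relative to the base $\omega_{u_0}$: the functions $u_k-u_0\le 0$ are decreasing and lie in the finite energy class $\mathcal{E}_{\tilde\chi}(X,\omega_{u_0})$, and the quantity just bounded is exactly their $\tilde\chi$-self-energy, since $(\omega_{u_0})_{u_k-u_0}^n=\omega_{u_k}^n$. Invoking the closedness of finite energy classes under decreasing sequences of uniformly bounded energy from \cite[Section 3]{gz}, I conclude $u-u_0\in\mathcal{E}_{\tilde\chi}(X,\omega_{u_0})$, that is $\int_X\chi(u-u_0)\,\omega_u^n<\infty$. The final step transfers this back to the original base $\omega$: using that $u_0\in\mathcal{E}_{\tilde\chi}(X,\omega)$, the convexity of $\chi$, and the doubling bound $\chi(2l)\le 2^p\chi(l)$ (which is where $\chi\in\mathcal{W}^+_p$ enters), together with the energy comparison estimates of \cite[Lemma 3.5]{gz}, I would bound $\int_X\chi(u)\,\omega_u^n$ by a combination of $\int_X\chi(u-u_0)\,\omega_u^n$ and the self-energy of $u_0$, yielding $u\in\mathcal{E}_{\tilde\chi}(X,\omega)$.

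The main obstacle is precisely this last change-of-base step. The naive splitting of $\chi(u)$ into a piece comparable to $\chi(u-u_0)$ and a piece comparable to $\chi(u_0)$ leaves the mixed term $\int_X\chi(u_0)\,\omega_u^n$, which integrates the weight of the less singular potential $u_0$ against the measure of the more singular limit $u$; attempting to dominate it by a self-measure via the monotonicity $\omega_u^n\le 2^n\omega_{(u+u_0)/2}^n$ only reproduces $\int_X\chi(u)\,\omega_u^n$ and is therefore circular. The non-circular control of this mixed term by the relative energy $\int_X\chi(u-u_0)\,\omega_u^n$ together with the self-energy of $u_0$ is exactly the content of the Guedj--Zeriahi energy estimates for weights in $\mathcal{W}^+_p$, and this is the step that must be carried out (or cited) with care. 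Once $u\in\mathcal{E}_{\tilde\chi}(X,\omega)$ is secured, the convergence $d_\chi(u_k,u)\to 0$ follows verbatim from Proposition \ref{Mdist_est_gen}.
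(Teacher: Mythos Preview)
Your overall strategy is sound and lines up with the paper's: obtain a uniform $\chi$-energy bound from the $d_\chi$-bound via Lemma~\ref{Mdist_est} and Proposition~\ref{NormIntegralEst}, then invoke the Guedj--Zeriahi stability of $\mathcal E_{\tilde\chi}$ under decreasing limits with bounded energy, and finally use Proposition~\ref{Mdist_est_gen} for the convergence $d_\chi(u_k,u)\to 0$. The paper also first reduces to $u_k\in\mathcal H$ (Lemma~\ref{Mdist_est} is only stated for smooth potentials at this point in the paper; the extension Lemma~\ref{Mdist_est_verygen} comes later), and then handles the general case by approximation via Proposition~\ref{Mdist_est_gen}---you should make this reduction explicit as well.

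The one place you make your life unnecessarily hard is the choice of reference potential. You compare to $u_0\in\mathcal E_{\tilde\chi}(X,\omega)$ and then have to transfer the relative energy $\int_X\chi(u-u_0)\,\omega_u^n$ back to the absolute energy $\int_X\chi(u)\,\omega_u^n$; as you correctly diagnose, this produces a mixed term that requires care, and working in the ``shifted'' class $\mathcal E_{\tilde\chi}(X,\omega_{u_0})$ is delicate when $u_0$ is unbounded. The paper avoids this entirely: after subtracting a constant one may assume $u_k<0$, and then the comparison is made against the constant potential $0\in\mathcal H$. Lemma~\ref{Mdist_est} and Proposition~\ref{NormIntegralEst} give directly
\[
m_p\Big(\tfrac{1}{\chi(1)}\int_X\chi(u_k)\,\omega_{u_k}^n\Big)\le \|u_k\|_{\chi,u_k}\le 2^{n+2}d_\chi(u_k,0)\le D,
\]
and \cite[Proposition 5.6]{gz} immediately yields $u\in\mathcal E_{\tilde\chi}(X,\omega)$ with no change of base needed. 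So the ``main obstacle'' you identify is an artifact of comparing to $u_0$ rather than to $0$; the paper's normalization dissolves it.
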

\begin{proof} We can suppose that $u_k < 0$. First we assume that $\{u_k\}_{k \in \Bbb N} \subset \mathcal H$. By Lemmas \ref{NormIntegralEst} and \ref{Mdist_est} it follows that
$$m_p\Big(\int_X {\chi}(u_j)\o_{u_j}^n / \chi(1)\Big) \leq \|u_j\|_{\chi,u_j} \leq 2^{n+1}d_\chi(u_j,0) \leq D.$$
Now \cite[Proposition 5.6]{gz} says that
$$\int_X {\tilde \chi}(u)\o_{u}^n < +\infty,$$
giving $u \in \mathcal E_{\tilde \chi}(X,\omega)$. The fact that $d_\chi(u_k,u) \to 0$ follows from Proposition \ref{Mdist_est_gen}. When elements of the sequence $\{u_k\}_{k \in \Bbb N}$ are non-smooth, the result follows from an approximation argument via Proposition \ref{Mdist_est_gen} again.
\end{proof}

The following theorem is the main result of this section. Its proof rests on Proposition \ref{contractivity} and the preceding two lemmas, and is carried out exactly the same way as the arguments in \cite[Theorem 9.2]{d}.
\begin{theorem} \label{EChiComplete} If $\chi \in \mathcal W^+_p, \ p \geq 1$ then $(\mathcal E_{\tilde \chi}(X, \o),d_\chi)$ is a geodesic metric space, which is the metric completion of $(\mathcal H,d_\chi)$. Additionally, the curve defined in \eqref{EchiGeodDef} is a geodesic connecting $u_0,u_1 \in \mathcal E_{\tilde \chi}(X, \o)$.
\end{theorem}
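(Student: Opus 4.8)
The plan is to follow the scheme of \cite[Theorem 9.2]{d}, since by this point almost all of the analytic input is in place. The metric-space axioms for $d_\chi$ on $\mathcal E_{\tilde\chi}(X,\o)$ were checked in the lemmas preceding this statement, and the lemma identifying the curve \eqref{EchiGeodDef} as a geodesic segment shows that any two points of $\mathcal E_{\tilde\chi}(X,\o)$ are joined by a $d_\chi$-geodesic; hence the space is geodesic as soon as it is shown to be a complete metric space containing $\mathcal H$ densely. Density is immediate: every $u\in\mathcal E_{\tilde\chi}(X,\o)$ is the pointwise decreasing limit of some $u_k\in\mathcal H$ by \cite{bk}, and $d_\chi(u_k,u)\to 0$ by Lemma \ref{mononton_seq} (equivalently Proposition \ref{Mdist_est_gen}). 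So the crux is completeness, after which density automatically exhibits $(\mathcal E_{\tilde\chi}(X,\o),d_\chi)$ as the completion of $(\mathcal H,d_\chi)$.

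To prove completeness, I would start from a $d_\chi$-Cauchy sequence and, passing to a subsequence, arrange $d_\chi(u_k,u_{k+1})<2^{-k}$. The idea is to replace this sequence by a monotone one, to which Lemma \ref{mononton_seq} and Proposition \ref{Mdist_est_gen} apply. For $l\ge k$ set $w_k^l=P(u_k,u_{k+1},\dots,u_l)$, the rooftop envelope of $\min(u_k,\dots,u_l)$; this sequence decreases in $l$. The key estimate comes from contractivity: since $w_k^l\le u_l$ is already $\o$-plurisubharmonic we have $w_k^l=P(w_k^l,u_l)$ and $w_k^{l+1}=P(w_k^l,u_{l+1})$, so Proposition \ref{contractivity} gives $d_\chi(w_k^l,w_k^{l+1})\le d_\chi(u_l,u_{l+1})<2^{-l}$. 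Hence $\{w_k^l\}_l$ is decreasing and $d_\chi$-Cauchy, so by Lemma \ref{mononton_seq} it converges to some $w_k\in\mathcal E_{\tilde\chi}(X,\o)$ with $d_\chi(w_k^l,w_k)\to 0$ and, summing the geometric tail, $d_\chi(u_k,w_k)=d_\chi(w_k^k,w_k)\le 2^{1-k}$.

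The sequence $\{w_k\}_k$ obtained this way is increasing (the envelope defining $w_{k+1}$ involves fewer constraints than that defining $w_k$, so $w_k\le w_{k+1}$) and $d_\chi$-bounded, because $d_\chi(u_k,w_k)\to 0$ and a Cauchy sequence is bounded. I would then invoke the increasing case of the monotone convergence results to produce $u=\lim_k w_k\in\mathcal E_{\tilde\chi}(X,\o)$ with $d_\chi(w_k,u)\to 0$, and conclude by the triangle inequality that $d_\chi(u_k,u)\le d_\chi(u_k,w_k)+d_\chi(w_k,u)\to 0$, so the original Cauchy sequence converges. This gives completeness, and together with the geodesic-segment lemma and density of $\mathcal H$ finishes the theorem.

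The main obstacle I anticipate is the increasing step: Lemma \ref{mononton_seq} is stated for decreasing sequences, and Proposition \ref{Mdist_est_gen} presupposes that the monotone limit already lies in $\mathcal E_{\tilde\chi}(X,\o)$. For the increasing, $d_\chi$-bounded sequence $\{w_k\}$ one must therefore separately certify that its limit $u=(\sup_k w_k)^\ast$ has finite $\tilde\chi$-energy. This I would extract from the $d_\chi$-bound via Lemma \ref{Mdist_est} and Proposition \ref{NormIntegralEst}, which convert the distance bound into a uniform bound on $\int_X\tilde\chi(w_k)\o_{w_k}^n$, and then appeal to the stability of the class $\mathcal E_{\tilde\chi}(X,\o)$ under increasing limits of uniformly bounded energy from \cite{gz}. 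The remaining envelope manipulations --- that $w_k^l$ is bona fide in $\mathcal H_\Delta$ so that the identity $w_k^l=P(w_k^l,u_l)$ holds, and that $P$ of infinitely many potentials is the decreasing limit of the finite envelopes --- are routine given the rooftop-envelope facts recalled in Section 2.2.
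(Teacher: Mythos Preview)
Your proposal is correct and follows essentially the same approach as the paper, which simply defers to \cite[Theorem 9.2]{d} and names Proposition \ref{contractivity}, Lemma \ref{AMcont}, and Lemma \ref{mononton_seq} as the needed ingredients. Your outline reproduces precisely that scheme: reduce to a sparse Cauchy subsequence, build the decreasing envelopes $w_k^l=P(u_k,\dots,u_l)$, use contractivity to show they are $d_\chi$-Cauchy, take limits via Lemma \ref{mononton_seq}, and then handle the resulting increasing sequence $w_k$; you also correctly flag the increasing step as the only place needing extra care. Two minor remarks: the identity $w_k^l=P(w_k^l,u_l)$ requires only that $w_k^l$ be $\omega$-psh and $\le u_l$, not that it lie in $\mathcal H_\Delta$; and for the increasing step it is cleanest to note that $w_k\ge w_1\in\mathcal E_{\tilde\chi}(X,\o)$ forces $u=(\sup_k w_k)^\ast\in\mathcal E_{\tilde\chi}(X,\o)$ once $\sup_X w_k$ is bounded, the latter following from the $d_\chi$-bound via Lemma \ref{AMcont}.
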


Unfortunately, geodesics connecting different points of $(\mathcal E_{\tilde \chi}(X, \o),d_\chi)$ may not be unique. This is most easily seen for $(\mathcal E^1(X, \o),d_1)$, as by \eqref{p-pythagorean_formula} we have
$$d_1(u_0,u_1)=d_1(u_0,P(u_0,u_1))+d_1(u_1,P(u_0,u_1)),$$
for all $u_0,u_1 \in \mathcal E^1(X,\omega)$. This formula implies that the geodesic connecting $u_0$ to $P(u_0,u_1)$ concatenated with the geodesic connecting $P(u_0,u_1)$ and $u_1$ has the same length as the geodesic connecting $u_0,u_1$.
The fact that geodesics connecting different points of $(\mathcal E_{\tilde \chi}(X, \o),d_\chi)$ may not be unique implies that $(\mathcal E_{\tilde \chi}(X, \o),d_\chi)$ is not a $CAT(0)$ space in general. This is in sharp contrast with the findings of \cite{d} about $(\mathcal E^2 (X, \o),d_2)$.

\section{Convergnce in $\chi$--Energy inside $\mathcal E_{\tilde \chi}(X,\o)$}

The first step in proving Theorem \ref{Energy_Metric_Eqv} is extending Lemma \ref{Mdist_est} for non-smooth potentials:

\begin{lemma} \label{Mdist_est_verygen}Suppose $\chi \in \mathcal W^+_p$ and $u_0,u_1 \in \mathcal E_{\tilde \chi}(X,\o)$ with $u_0 \leq u_1$. The following holds:
$$\max\{2^{-n-2} \|u_1-u_0\|_{\chi,u_0}, \|u_1-u_0\|_{\chi,u_1}\} \leq d_\chi(u_0,u_1) \leq \|u_1-u_0\|_{\chi,u_0}$$
\end{lemma}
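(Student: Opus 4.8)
The plan is to reduce the statement to the smooth case already settled in Lemma~\ref{Mdist_est} and then pass to the limit. First I would choose decreasing sequences $u_0^k,u_1^k\in\mathcal H$ with $u_0^k\downarrow u_0$, $u_1^k\downarrow u_1$ which, exactly as in the proof of the preceding lemma, can be arranged so that $u_0^k\le u_1^k$; after subtracting constants we may also assume $u_1^k\le 0$, so that
\begin{equation*}
0\le u_1^k-u_0^k\le -u_0^k\le -u_0\qquad\text{pointwise}.
\end{equation*}
Applying Lemma~\ref{Mdist_est} to each pair gives
\begin{equation*}
\max\{2^{-n-2}\|u_1^k-u_0^k\|_{\chi,u_0^k},\ \|u_1^k-u_0^k\|_{\chi,u_1^k}\}\le d_\chi(u_0^k,u_1^k)\le\|u_1^k-u_0^k\|_{\chi,u_0^k},
\end{equation*}
and the whole lemma follows once I justify the three limits $k\to\infty$. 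Finiteness of the two limiting norms is immediate: since $0\le u_1-u_0\le -u_0$ and $\chi$ is even and increasing, $\chi(u_1-u_0)\le\chi(u_0)$, so $u_1-u_0\in L^\chi(\o_{u_0}^n)$ because $u_0\in\mathcal E_{\tilde\chi}(X,\o)$ and $u_1-u_0\in L^\chi(\o_{u_1}^n)$ because $u_0\in L^\chi(\o_{u_1}^n)$ by \cite[Proposition~3.6]{gz}.

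The distance term is the easy one: by the very definition \eqref{EchiDistDef} of $d_\chi$ on $\mathcal E_{\tilde\chi}(X,\o)$, together with the already established independence of the limit from the chosen approximating sequences, we have $d_\chi(u_0^k,u_1^k)\to d_\chi(u_0,u_1)$.

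The real content, and the step I expect to be the main obstacle, is the convergence of the Orlicz norms
\begin{equation*}
\|u_1^k-u_0^k\|_{\chi,u_0^k}\to\|u_1-u_0\|_{\chi,u_0},\qquad\|u_1^k-u_0^k\|_{\chi,u_1^k}\to\|u_1-u_0\|_{\chi,u_1},
\end{equation*}
in which both the integrand \emph{and} the reference measure move with $k$. Writing $N=\|u_1-u_0\|_{\chi,u_0}$ (the case $N=0$ being trivial), the two-sided bound of Proposition~\ref{NormIntegralEst} holds for each fixed measure with $m_p,M_p$ continuous and strictly increasing, so by a squeeze it suffices to show
\begin{equation*}
\int_X\chi\Big(\frac{u_1^k-u_0^k}{N}\Big)\o_{u_0^k}^n\longrightarrow\int_X\chi\Big(\frac{u_1-u_0}{N}\Big)\o_{u_0}^n=\chi(1),
\end{equation*}
the last equality being \eqref{OrliczNormId}. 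Here the pointwise domination $0\le u_1^k-u_0^k\le -u_0$ bounds every integrand by the fixed function $\chi(u_0/N)\le N^{-p}\chi(u_0)$ (using \eqref{GrowthControl}), and the potentials converge in capacity, being decreasing limits in $\mathcal E$, so the integrands converge in capacity to $\chi((u_1-u_0)/N)$.

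The one genuinely delicate point is passing to the limit through the moving measures $\o_{u_0^k}^n$: an ordinary dominated convergence theorem does not apply, and instead I would transfer the integrals to the fixed measure $\o_{u_0}^n$ up to a dimensional constant by means of \cite[Lemma~3.5]{gz}, precisely as in Lemma~\ref{IntDistEst}, so as to obtain the uniform integrability that licenses the limit. The identical argument, now using $u_0\in L^\chi(\o_{u_1}^n)$ to dominate against the measures $\o_{u_1^k}^n$, yields convergence of the second norm. With full norm convergence and $d_\chi(u_0^k,u_1^k)\to d_\chi(u_0,u_1)$ in hand, the three inequalities pass to the limit directly, completing the proof.
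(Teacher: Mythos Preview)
Your overall strategy is exactly the paper's: approximate by smooth ordered pairs, apply Lemma~\ref{Mdist_est}, and pass to the limit using the definition~\eqref{EchiDistDef} of $d_\chi$ together with convergence of the Orlicz norms. The paper states precisely this, deferring the norm convergence to the technical lemma that immediately follows.

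The gap is in your treatment of that norm convergence. Your reduction via Proposition~\ref{NormIntegralEst} to showing $\int_X\chi((u_1^k-u_0^k)/N)\,\o_{u_0^k}^n\to\chi(1)$ is fine, and the pointwise domination $\chi((u_1^k-u_0^k)/N)\le\chi(u_0/N)$ is correct. But your proposed justification, ``transfer the integrals to the fixed measure $\o_{u_0}^n$ \ldots\ by means of \cite[Lemma~3.5]{gz}, precisely as in Lemma~\ref{IntDistEst},'' does not deliver what you need. In Lemma~\ref{IntDistEst} the target is $0$, so the one--sided bound from \cite[Lemma~3.5]{gz} suffices; here the target is $\chi(1)>0$, and \cite[Lemma~3.5]{gz} applied to the dominating function only yields uniform boundedness of $\int_X\chi(u_0/N)\,\o_{u_0^k}^n$, not the uniform tail control (``uniform integrability'') you invoke. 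To make the limit go through one needs an extra ingredient: the paper first treats the bounded case via quasi--continuity and capacity arguments, and then reduces the general case to the bounded one by choosing an auxiliary weight $\psi\in\mathcal W^+_{2p+1}$ with $\chi(l)/\psi(l)\to 0$ and $u_0\in\mathcal E_{\tilde\psi}(X,\o)$, which converts the domination into a genuine uniform tail estimate
\[
\int_{\{u_0\le -L\}}\chi\Big(\frac{u_0}{N}\Big)\o_{u_0^k}^n\le\frac{\chi(L/N)}{\psi(L/N)}\int_X\psi\Big(\frac{u_0}{N}\Big)\o_{u_0^k}^n\le C\,\frac{\chi(L/N)}{\psi(L/N)}\,E_{\tilde\psi}(u_0),
\]
uniformly in $k$. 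Your sketch is pointed in the right direction, but this de~la~Vall\'ee--Poussin--type step is the actual mechanism that ``licenses the limit,'' and it is missing from your argument.
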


The proof of this result follows using approximation with smooth potentials via Proposition \ref{Mdist_est_gen} and the next technical lemma:
\begin{lemma} Suppose $\chi \in \mathcal W^+_p$ and $\{u_k\}_{k \in \Bbb N},\{v_k\}_{k \in \Bbb N},\{w_k\}_{k \in \Bbb N} \subset \mathcal E_{\tilde \chi}(X,\o) $ are decreasing sequences for which $u_k \leq v_k, \ u_k \leq w_k$ and also $u_k \searrow u \in \mathcal E_{\tilde\chi}(X,\o)$, $v_k \searrow v \in \mathcal E_{\tilde\chi}(X,\o)$ and $w_k \searrow w \in \mathcal E_{\tilde\chi}(X,\o)$. Then we have
\begin{equation}
\label{auxlimit}
\lim_{k \to \infty} \|u_k - v_k\|_{\chi,w_k} = \|u - v\|_{\chi,w}
\end{equation}
\end{lemma}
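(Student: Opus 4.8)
The plan is to reduce the whole statement to a single convergence of weighted energy integrals, and then to read off the convergence of the gauge norms from the defining level-set characterization \eqref{OrliczNormId}. Write $f_k = u_k - v_k$ and $f = u - v$. After subtracting a common large constant from the triple $(u_k,v_k,w_k)$ (which affects neither $f_k$ nor the measures $\o_{w_k}^n$, and preserves the hypotheses) I may assume all potentials are $\le 0$. Since $u_k \le v_k$ we have $f_k \le 0$, and since both sequences decrease, $u_k \ge u$ and $v_k \le v_1$ give the uniform two-sided bound $\Phi \le f_k \le 0$ with $\Phi := u - v_1$ \emph{independent of $k$}; moreover $f_k \to f$ pointwise off a pluripolar set. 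By \cite[Proposition 3.6]{gz} every element of $\mathcal E_{\tilde \chi}(X,\o)$ lies in $L^\chi(\o_{w_k}^n)$ and in $L^\chi(\o_w^n)$, so $\Phi, f$ do as well; together with the growth bound \eqref{GrowthControl} this makes all integrals below finite and shows $N := \|f\|_{\chi,w} < \infty$.

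The key analytic step I would isolate is the claim that for every fixed $r>0$,
\begin{equation}\label{aux_key}
\int_X \chi\Big(\frac{f_k}{r}\Big)\o_{w_k}^n \longrightarrow \int_X \chi\Big(\frac{f}{r}\Big)\o_{w}^n .
\end{equation}
Granting \eqref{aux_key}, the conclusion follows by monotonicity. The maps $r \mapsto I_k(r) := \int_X \chi(f_k/r)\o_{w_k}^n$ and $r \mapsto I(r) := \int_X \chi(f/r)\o_w^n$ are continuous and non-increasing, and when $N>0$ the limit $I$ is strictly decreasing across the value $\chi(1)$ (here we use that $\chi$ is strictly increasing on $(0,\infty)$ and that $f \not\equiv 0$ on a set of positive $\o_w^n$-measure). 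By \eqref{OrliczNormId}, $\|f_k\|_{\chi,w_k}$ is the root of $I_k(r)=\chi(1)$ and $N$ the root of $I(r)=\chi(1)$; for $r<N$ we have $I(r)>\chi(1)$, hence $I_k(r)>\chi(1)$ and $\|f_k\|_{\chi,w_k}\ge r$ eventually, while for $r>N$ the reverse inequalities give $\|f_k\|_{\chi,w_k}\le r$ eventually, so $\|f_k\|_{\chi,w_k}\to N$. The degenerate case $N=0$ (i.e. $f=0$ $\o_w^n$-a.e.) is handled directly: then $I(r)=0<\chi(1)$ for every $r$, so \eqref{aux_key} forces $\|f_k\|_{\chi,w_k}\le r$ eventually for each $r>0$, whence $\|f_k\|_{\chi,w_k}\to 0$.

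It remains to establish \eqref{aux_key}, which I expect to be the main obstacle, since both the integrand and the measure vary with $k$. I would truncate the weight: with $\chi_M = \min(\chi,M)$ write $\chi = \chi_M + (\chi-\chi_M)$. For the bounded part, $\chi_M(f_k/r)$ is bounded and quasi-continuous and converges to $\chi_M(f/r)$ in capacity in the sense of \cite{k} (the monotone convergences $u_k\searrow u$, $v_k\searrow v$ converge in capacity, hence so does their difference); combined with the weak convergence $\o_{w_k}^n \to \o_w^n$ along the decreasing sequence $w_k\searrow w$ \cite{gz}, the convergence theorem for bounded quasi-continuous integrands against Monge--Amp\`ere measures of monotone sequences gives $\int_X \chi_M(f_k/r)\o_{w_k}^n \to \int_X \chi_M(f/r)\o_w^n$ for each fixed $M$. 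For the tail, the uniform domination $\Phi \le f_k \le 0$ together with the evenness and monotonicity of $\chi$ yields $0 \le (\chi-\chi_M)(f_k/r) \le (\chi(\Phi/r)-M)_+$ pointwise, so the tail integral is at most $\sup_k \int_X (\chi(\Phi/r)-M)_+\,\o_{w_k}^n$, and the decisive point is that this supremum tends to $0$ as $M\to\infty$, i.e. that the family $\{\o_{w_k}^n\}$ equi-integrates the \emph{fixed} function $\chi(\Phi/r)$. This equi-integrability is exactly where the hypotheses enter: since $w_k\searrow w\in\mathcal E_{\tilde\chi}(X,\o)$ and $u_k\le w_k\le 0$, the comparison \cite[Lemma 3.5]{gz} bounds $\int_X\chi(w_k)\o_{w_k}^n$ by $C(p)\int_X\chi(u_k)\o_{u_k}^n = -C(p)E_{\tilde\chi}(u_k) \le -C(p)E_{\tilde\chi}(u)$ uniformly in $k$, and the resulting uniform $\tilde\chi$-energy bound, via the domination of finite-energy Monge--Amp\`ere measures by the capacity of \cite{k} with a modulus depending only on the energy \cite{gz}, furnishes both the equi-integrability of the tail and the uniform control used in the bounded part. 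Letting first $k\to\infty$ and then $M\to\infty$ in the splitting yields \eqref{aux_key} and completes the proof.
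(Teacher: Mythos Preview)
Your overall strategy---reduce the norm convergence to the convergence of the single family of integrals \eqref{aux_key} and then read off the gauge norms from \eqref{OrliczNormId}---is exactly what the paper does, and your treatment of the ``bounded'' part via quasi-continuity and convergence in capacity is essentially the same as the paper's argument in the bounded case (the paper truncates the potentials, $h^L=\max(h,-L)$, whereas you truncate the weight $\chi_M=\min(\chi,M)$; these are interchangeable devices). The uniform pointwise domination $\Phi:=u-v_1\le f_k\le 0$ is correct and is a useful bookkeeping observation.

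The genuine gap is your justification of the tail estimate. You need
\[
\sup_k \int_X \big(\chi(\Phi/r)-M\big)_+\,\o_{w_k}^n \xrightarrow[M\to\infty]{} 0,
\]
i.e.\ equi-integrability of the fixed function $\chi(\Phi/r)$ against the moving measures $\o_{w_k}^n$. What you actually establish (via \cite[Lemma~3.5, Proposition~3.6]{gz}) is only a \emph{uniform bound} $\sup_k\int_X\chi(\Phi/r)\,\o_{w_k}^n<\infty$; uniform integrability does not follow from a uniform $L^1$ bound. The appeal to ``domination of finite-energy Monge--Amp\`ere measures by capacity with a modulus depending only on the energy'' does not give what you need: for unbounded $w_k\in\mathcal E_{\tilde\chi}$ there is no general estimate $\o_{w_k}^n(E)\le F(\textup{Cap}(E))$ with $F$ depending only on $E_{\tilde\chi}(w_k)$, and even if there were, you would still have to convert it into a tail estimate for $\chi(\Phi/r)$, which requires information on the capacity of $\{\chi(\Phi/r)>M\}$ that is again equivalent to what you are trying to prove.

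The paper closes this gap by a De la Vall\'ee--Poussin argument applied at the level of weights: one chooses $\psi\in\mathcal W^+_{2p+1}$ with $\chi(l)/\psi(l)\to 0$ as $l\to-\infty$ and $u\in\mathcal E_\psi(X,\o)$. Then on $\{u_k\le -L\}$ one has $\chi(\cdot/N)\le \frac{\chi(L/N)}{\psi(L/N)}\,\psi(\cdot/N)$, and \cite[Lemma~3.5, Proposition~3.6]{gz} give a \emph{uniform} bound on $\int_X\psi(u_k/N)\,\o_{w_k}^n$ and $\int_X\psi(u_k^L/N)\,\o_{w_k^L}^n$ in terms of $E_{\tilde\psi}(u)$, so the tail is bounded by $C\,\chi(L/N)/\psi(L/N)\to 0$. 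This is precisely the missing idea in your argument: you need a weight strictly faster than $\chi$ under which $u$ still has finite energy, not merely the $\tilde\chi$-energy bound itself.
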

\begin{proof} We can suppose without loss of generality that all the functions involved are negative and $N = \|u - v\|_{\chi,w} >0$. As $\chi \in \mathcal W^+_p$, by Proposition \ref{NormIntegralEst} it is enough to prove that
\begin{flalign}\label{boundedlimit}
\int_X \chi\Big( \frac{u_k - v_k}{N}\Big)\o_{w_k}^n -\chi(1)=\int_X \chi\Big( \frac{u_k - v_k}{N}\Big)\o_{w_k}^n - \int_X \chi\Big( \frac{u - v}{N}\Big)\o_{w}^n \to 0.
\end{flalign}
First we suppose that there exists $L >1$ such that $-L < u,u_k,v,v_k,w,w_k < 0$ are uniformly bounded. Given $\varepsilon >0$ one can find an open $O \subset X$ such that $\textup{Cap}_X(O) < \varepsilon$ and $u,u_k,v,v_k,w,w_k$ are all continuous on $X \setminus O$. We have
\begin{flalign*}
\int_X \chi\Big( \frac{u_k - v_k}{N}\Big)\o_{w_k}^n - \int_X \chi\Big( \frac{u - v}{N}\Big)\o_{w_k}^n=\int_O + \int_{X \setminus O} \Big[\chi\Big( \frac{u_k - v_k}{N}\Big) - \chi\Big( \frac{u - v}{N}\Big)\Big]\o_{w_k}^n.
\end{flalign*}
The integral on $O$ is bounded by $2\varepsilon L^n\chi(2L/N)$. The second integral tends to $0$ as on the closed set $X \setminus O$ we have $u_k \to u$ and $v_k \to v$ uniformly. We also have
\begin{flalign*}
\int_X \chi\Big( \frac{u - v}{N}\Big)\o_{w_k}^n - \int_X \chi\Big( \frac{u - v}{N}\Big)\o_{w}^n \to 0,
\end{flalign*}
as the function $\chi\Big( \frac{u - v}{N}\Big)$ is quasi--continuous and bounded \cite[Theorem 3.2]{bt}. This all means that \eqref{boundedlimit} follows when $-L < u,u_k,v,v_k,w,w_k <0$. Now we argue that this last limit also holds when $u,u_k,v,v_k,w,w_k$ are unbounded. For this we  show that
\begin{flalign}\label{uniformlimit}
\int_X \chi\Big( \frac{u_k - v_k}{N}\Big)\o_{w_k}^n - \int_X \chi\Big( \frac{u_k^L - v_k^L}{N}\Big)\o_{w_k^L}^n \to 0
\end{flalign}
\begin{flalign}\label{uniformlimit2}
\int_X \chi\Big( \frac{u - v}{N}\Big)\o_{w}^n - \int_X \chi\Big( \frac{u^L - v^L}{N}\Big)\o_{w^L}^n \to 0
\end{flalign}
as $L \to -\infty$ uniformly for $u,v,w,u_k,v_k,w_k$, where $h^L = \max (h,-L)$. Before we get into the estimates we observe that there exists $\psi \in \mathcal W^+_{2p+1}$ such that $\chi(l)/\psi(l)$ decreases to $0$ as $l \to -\infty$ and $u \in \mathcal E_\psi(X,\o)$. This implies that $u,v,w,u_k,v_k,w_k \in \mathcal E_\psi(X,\o), \ k \in \Bbb N$. As $\{ v_k \leq -L\},\{ w_k \leq -L\} \subseteq\{ u_k \leq -L\}$, since the Monge-Amp\'ere measure is local in the plurifine topology we can start writing:
\begin{flalign*}
\Big|\int_X \chi\Big( &\frac{u_k - v_k}{N}\Big)\o_{w_k}^n - \int_X \chi\Big( \frac{u_k^L - v_k^L}{N}\Big)\o_{w_k^L}^n \Big| = \\
&=\Big|\int_{\{u_k \leq -L\}} \chi\Big( \frac{u_k - v_k}{N}\Big)\o_{w_k}^n - \int_{\{u_k \leq -L\}} \chi\Big( \frac{u_k^L - v_k^L}{N}\Big)\o_{w_k^L}^n \Big| \\
&\leq\int_{\{u_k \leq -L\}} \chi\Big( \frac{u_k}{N}\Big)\o_{w_k}^n + \int_{\{u_k \leq -L\}} \chi\Big( \frac{u_k^L}{N}\Big)\o_{w_k^L}^n \\
&\leq\frac{\chi(L/N)}{\psi(L/N)}\Big(\int_{\{u_k \leq -L\}} \psi\Big( \frac{u_k}{N}\Big)\o_{w_k}^n + \int_{\{u_k \leq -L\}} \psi\Big( \frac{u_k^L}{N}\Big)\o_{w_k^L}^n \Big)\\
&\leq\frac{\chi(L/N)}{\psi(L/N)}\Big(\int_X \psi\Big( \frac{u_k}{N}\Big)\o_{w_k}^n + \int_X \psi\Big( \frac{u_k}{N}\Big)\o_{w_k^L}^n \Big)\\
&\leq\frac{C(p,N)\chi(L/N)}{\psi(L/N)}\Big(E_{\tilde \psi}(u_k) + E_{\tilde \psi}(w_k) + E_{\tilde \psi}(w_k^L) \Big)\\
&\leq\frac{C(p,N)\chi(L/N)}{\psi(L/N)} E_{\tilde \psi}(u),
\end{flalign*}
where in the penultimate line we have used \cite[Proposition 3.6]{gz} and  in the last line we have used \cite[Lemma 3.5]{gz}. This justifies \eqref{uniformlimit} and \eqref{uniformlimit2} is established the same way. As we explained above, all this implies that \eqref{boundedlimit} holds for arbitrary $u,u_k,v,v_k,w,w_k$.
\end{proof}
The following preliminary result will follow from \eqref{normdecompositionformula}, and is perhaps the crucial ingredient in proving Theorem \ref{Energy_Metric_Eqv}.
\begin{lemma} \label{halwayest} Suppose $\chi \in \mathcal W^+_p$ and $u_0,u_1 \in \mathcal E_{\tilde \chi}(X,\o)$. There exists $C(p) >1$ such that
$$d_\chi\Big(u_0,\frac{u_0+u_1}{2}\Big) \leq C d_\chi(u_0,u_1).$$
\end{lemma}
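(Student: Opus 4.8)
The plan is to route the estimate through the rooftop envelope $P:=P(u_0,u_1)$, which—as already used in \eqref{normdecompositionformula}—lies in $\mathcal E_{\tilde\chi}(X,\o)$, and to exploit that $P$ sits below the midpoint $m:=(u_0+u_1)/2$. Since $u_0,u_1$ are $\o$--psh, so is $m$, and convexity of the finite energy class gives $m\in\mathcal E_{\tilde\chi}(X,\o)$. As $P\le\min\{u_0,u_1\}\le m$ and also $P\le u_0$, all the comparisons needed to invoke Lemma \ref{Mdist_est_verygen} are in place. The triangle inequality then reduces the problem to
$$d_\chi(u_0,m)\le d_\chi(u_0,P)+d_\chi(P,m),$$
so it suffices to bound each summand by a fixed multiple of $d_\chi(u_0,u_1)$.

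For the first summand I would simply quote the lower estimate in \eqref{normdecompositionformula}, namely $\tfrac12\big(d_\chi(u_0,P)+d_\chi(u_1,P)\big)\le d_\chi(u_0,u_1)$, which gives at once $d_\chi(u_0,P)\le 2\,d_\chi(u_0,u_1)$ and, by the same token, $d_\chi(u_1,P)\le 2\,d_\chi(u_0,u_1)$. For the second summand I would use $P\le m$ together with the upper bound of Lemma \ref{Mdist_est_verygen} to get $d_\chi(P,m)\le\|m-P\|_{\chi,P}$. The key algebraic observation is that $m-P=\tfrac12\big((u_0-P)+(u_1-P)\big)$ with both differences nonnegative, so homogeneity and the triangle inequality for the Orlicz norm $\|\cdot\|_{\chi,P}$ yield
$$\|m-P\|_{\chi,P}\le\tfrac12\big(\|u_0-P\|_{\chi,P}+\|u_1-P\|_{\chi,P}\big).$$
Applying the lower estimate of Lemma \ref{Mdist_est_verygen} to each pair $P\le u_i$ converts these norms back into distances, $\|u_i-P\|_{\chi,P}\le 2^{n+2}\,d_\chi(P,u_i)$, and feeding in the bound $d_\chi(P,u_0)+d_\chi(P,u_1)\le 2\,d_\chi(u_0,u_1)$ from \eqref{normdecompositionformula} closes this summand:
$$d_\chi(P,m)\le 2^{n+1}\big(d_\chi(P,u_0)+d_\chi(P,u_1)\big)\le 2^{n+2}\,d_\chi(u_0,u_1).$$

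Combining the two bounds gives $d_\chi(u_0,m)\le(2+2^{n+2})\,d_\chi(u_0,u_1)$, which is the assertion with $C$ a dimensional constant (harmlessly absorbed into the allowed $C(p)$). I expect no genuine analytic obstacle: all the heavy machinery—the Pythagorean decomposition \eqref{normdecompositionformula} and the two--sided comparison of Lemma \ref{Mdist_est_verygen}—is already available, and what remains is only monotone and triangle--type manipulation of Orlicz norms. The one conceptual point is the choice of $P(u_0,u_1)$ as the intermediate vertex together with the observation that it lies below $m$, so that the midpoint sits in the ``cone above $P$'' where Lemma \ref{Mdist_est_verygen} applies; the only step requiring care is checking that $P$, $m$, and the $u_i$ all belong to $\mathcal E_{\tilde\chi}(X,\o)$, which is automatic from convexity of the energy class and the membership $P(u_0,u_1)\in\mathcal E_{\tilde\chi}(X,\o)$ already used in \eqref{normdecompositionformula}.
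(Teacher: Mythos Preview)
Your argument is correct. It uses the same building blocks as the paper (the Pythagorean--type estimate \eqref{normdecompositionformula} and the two--sided comparison Lemma \ref{Mdist_est_verygen}), but the route is organized differently. The paper pivots through the envelope $P\big(u_0,\tfrac{u_0+u_1}{2}\big)$: it first applies \eqref{normdecompositionformula} to the pair $\big(u_0,\tfrac{u_0+u_1}{2}\big)$, then replaces $P\big(u_0,\tfrac{u_0+u_1}{2}\big)$ by the smaller $P(u_0,u_1)$ via the monotonicity Lemma \ref{NaiveCompare}, converts to Orlicz norms at the base point $P(u_0,u_1)$, splits, and converts back. You instead go straight through $P=P(u_0,u_1)$ via the metric triangle inequality $d_\chi(u_0,m)\le d_\chi(u_0,P)+d_\chi(P,m)$, which makes Lemma \ref{NaiveCompare} unnecessary and is arguably a bit cleaner (it also sidesteps the minor issue that Lemma \ref{NaiveCompare} is stated only for smooth potentials). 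The cost is the same: both approaches end with a constant of the form $C=C(n)$ coming from the $2^{n+2}$ factor in Lemma \ref{Mdist_est_verygen}, which the paper implicitly absorbs into $C(p)$. One small remark on justification: to get $m\in\mathcal E_{\tilde\chi}(X,\o)$ it is cleanest to note $m\ge P(u_0,u_1)\in\mathcal E_{\tilde\chi}(X,\o)$ and use that potentials dominating an element of $\mathcal E_{\tilde\chi}$ again belong to $\mathcal E_{\tilde\chi}$; ``convexity of the energy class'' also works but the comparison with $P$ is what you actually need (and already have) for the estimate.
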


\begin{proof} Using \eqref{normdecompositionformula} multiple times we can start writing:
\begin{flalign*}
d_\chi\Big(u_0,\frac{u_0 + u_1}{2}\Big) &\leq C\Big[d_\chi\Big(u_0, P\Big(u_0,\frac{u_0 + u_1}{2}\Big)\Big) + d_\chi\Big(\frac{u_0 + u_1}{2},P\Big(u_0,\frac{u_0 + u_1}{2}\Big)\Big)\Big]\\
&\leq C\Big[d_\chi(u_0, P(u_0,u_1)) + d_\chi\Big(\frac{u_0 + u_1}{2},P(u_0,u_1)\Big)\Big]\\
&\leq C\Big[ \|u_0 - P(u_0,u_1)\|_{\chi,P(u_0,u_1)} + \Big\|\frac{u_0+u_1}{2} - P(u_0,u_1)\Big\|_{\chi,P(u_0,u_1)}\Big]\\
&\leq C\Big[ \frac{3}{2}\|u_0 - P(u_0,u_1)\|_{\chi,P(u_0,u_1)} + \frac{1}{2}\|u_1 - P(u_0,u_1)\|_{\chi,P(u_0,u_1)}\Big]\\
& \leq C\Big[d_\chi(u_0, P(u_0,u_1)) + d_\chi(u_1,P(u_0,u_1))\Big]\\
&\leq Cd_\chi(u_0,u_1),
\end{flalign*}
where in the second line we have used Lemma \ref{NaiveCompare} and the fact that $P(u_0,u_1) \leq P(u_0,(u_0 + u_1)/2)$, in the third and fifth line Lemma \ref{Mdist_est_verygen}, in the fourth line the subadditivity of the norm, and in the sixth line we have used \eqref{normdecompositionformula} again.
\end{proof}

The $\max$ operator interacts well with the $I_\chi$ energy. This is showcased in the next auxiliary lemma which is essentially a result of Guedj \cite[Proposition 2.16]{g} adapted to our more general setting:
\begin{lemma}\label{maxaddititvity}
Given $\chi \in \mathcal W^+_p$ we have
\begin{flalign*}
\frac{1}{2}(I_\chi(u_0,\max(u_0,u_1)) + &I_\chi(\max(u_0,u_1),u_1)) \leq I_\chi(u_0,u_1) \\
&\leq 2(I_\chi(u_0,\max(u_0,u_1)) + I_\chi(\max(u_0,u_1),u_1))
\end{flalign*}
for $u_0,u_1 \in \mathcal E_{\tilde \chi}(X,\o)$.
\end{lemma}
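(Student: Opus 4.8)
The plan is to reduce the two-sided estimate to a localization of the Monge--Amp\`ere measure and an elementary disjoint-support inequality for Orlicz norms. Write $m:=\max(u_0,u_1)\in\mathcal E_{\tilde\chi}(X,\o)$ (membership follows from the stability of these high-energy classes under $\max$, \cite{gz}) and set $\phi:=u_1-u_0=\phi^+-\phi^-$ with $\phi^+=\max(\phi,0)$, $\phi^-=\max(-\phi,0)$, so that $\phi^+\phi^-\equiv0$. Since $m-u_0=\phi^+$ and $u_1-m=-\phi^-$, and $\chi$ is even, unwinding the definition of $I_\chi$ gives
\[
I_\chi(u_0,m)+I_\chi(m,u_1)=\|\phi^+\|_{\chi,u_0}+\|\phi^+\|_{\chi,m}+\|\phi^-\|_{\chi,m}+\|\phi^-\|_{\chi,u_1}.
\]

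First I would eliminate the two norms taken against $\o_m^n$. Exactly as in the argument establishing \eqref{auxlimit}, the non-pluripolar Monge--Amp\`ere measure is local in the plurifine topology: on the plurifine-open set $\{u_1>u_0\}$ one has $m=u_1$, hence $\mathbbm 1_{\{u_1>u_0\}}\o_m^n=\mathbbm 1_{\{u_1>u_0\}}\o_{u_1}^n$, and symmetrically $\mathbbm 1_{\{u_0>u_1\}}\o_m^n=\mathbbm 1_{\{u_0>u_1\}}\o_{u_0}^n$. Because $\phi^+$ vanishes off $\{u_1>u_0\}$ (in particular on the coincidence set $\{u_0=u_1\}$), for every $r>0$ we get $\int_X\chi(\phi^+/r)\o_m^n=\int_X\chi(\phi^+/r)\o_{u_1}^n$, so $\|\phi^+\|_{\chi,m}=\|\phi^+\|_{\chi,u_1}$; likewise $\|\phi^-\|_{\chi,m}=\|\phi^-\|_{\chi,u_0}$. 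Substituting,
\[
I_\chi(u_0,m)+I_\chi(m,u_1)=\big(\|\phi^+\|_{\chi,u_0}+\|\phi^-\|_{\chi,u_0}\big)+\big(\|\phi^+\|_{\chi,u_1}+\|\phi^-\|_{\chi,u_1}\big).
\]

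The remaining ingredient is a disjoint-support estimate, to be applied with $\mu=\o_{u_0}^n$ and $\mu=\o_{u_1}^n$. Fix such a probability measure $\mu$ and set $c:=\|\phi\|_{\chi,\mu}$; the case $c=0$ is trivial, so assume $c>0$. Since $\chi$ is even with $\chi(0)=0$ and $|\phi|=\phi^++\phi^-$ with disjoint supports, \eqref{OrliczNormId} gives $\int_X\chi(\phi^+/c)\,d\mu+\int_X\chi(\phi^-/c)\,d\mu=\int_X\chi(|\phi|/c)\,d\mu=\chi(1)$, so each summand is at most $\chi(1)$ and hence $\max(\|\phi^+\|_{\chi,\mu},\|\phi^-\|_{\chi,\mu})\le c$. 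Combined with subadditivity of the norm this yields
\[
\tfrac12\big(\|\phi^+\|_{\chi,\mu}+\|\phi^-\|_{\chi,\mu}\big)\le\|\phi\|_{\chi,\mu}\le\|\phi^+\|_{\chi,\mu}+\|\phi^-\|_{\chi,\mu}.
\]
Summing over $\mu=\o_{u_0}^n,\o_{u_1}^n$, using $I_\chi(u_0,u_1)=\|\phi\|_{\chi,u_0}+\|\phi\|_{\chi,u_1}$ and the previous display, produces
\[
\tfrac12\big(I_\chi(u_0,m)+I_\chi(m,u_1)\big)\le I_\chi(u_0,u_1)\le I_\chi(u_0,m)+I_\chi(m,u_1),
\]
which is slightly stronger than the asserted inequality (the right-hand constant $2$ may in fact be taken to be $1$).

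I expect the one genuinely delicate point to be the localization identities $\|\phi^\pm\|_{\chi,m}=\|\phi^\pm\|_{\chi,u_i}$ in the possibly unbounded setting of $\mathcal E_{\tilde\chi}(X,\o)$, for which the plurifine locality of the non-pluripolar Monge--Amp\`ere measure is essential; the rest is a measure-by-measure computation. Should one wish to avoid invoking locality directly for unbounded potentials, the inequality can first be proved for $u_0,u_1\in\mathcal H$ (where locality is the classical Bedford--Taylor statement) and then extended to $\mathcal E_{\tilde\chi}(X,\o)$ by a decreasing approximation, keeping the norms in their $\o_m^n$-form so that the relevant potentials are ordered, and using Proposition \ref{Mdist_est_gen} together with the continuity of Orlicz norms under monotone approximation recorded in \eqref{auxlimit}.
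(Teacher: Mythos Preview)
Your argument is correct and uses the same ingredients as the paper's proof---plurifine locality of the Monge--Amp\`ere measure to split the integrals over $\{u_1>u_0\}$ and $\{u_0>u_1\}$, together with elementary Orlicz-norm manipulations. Your organization is in fact slightly cleaner: by first converting both $\o_m^n$-norms to $\o_{u_i}^n$-norms and then invoking the triangle inequality $\|\phi\|_{\chi,\mu}\le\|\phi^+\|_{\chi,\mu}+\|\phi^-\|_{\chi,\mu}$ for each measure, you obtain the upper bound with constant $1$ rather than the constant $2$ the paper gets via its pigeonhole/convexity step.
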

\begin{proof} Suppose $\alpha = \| u_1 - u_0\|_{\chi,u_0}$ and $\beta = \| u_1 - u_0\|_{\chi,u_1}$. As it will be clear at the end of the proof, we can assume that $\alpha,\beta \neq 0$. Then we have that
$$\int_X \chi\Big(\frac{u_1 - u_0}{\alpha}\Big)\o_{u_0}^n = \int_X \chi\Big(\frac{u_1 - u_0}{\beta}\Big)\o_{u_1}^n= \chi(1).$$
As the Monge-Amp\`ere operator is local in the plurifine topology we also have that
$$\int_X \chi\Big(\frac{\max(u_1,u_0) - u_0}{\beta}\Big)\o_{\max(u_0,u_1)}^n=\int_{\{ u_1 > u_0\}} \chi\Big(\frac{u_1 - u_0}{\beta}\Big)\o_{u_1}^n \leq \chi(1)$$
$$\int_X \chi\Big(\frac{\max(u_1,u_0) - u_0}{\alpha}\Big)\o_{u_0}^n=\int_{\{ u_1 > u_0\}} \chi\Big(\frac{u_1 - u_0}{\alpha}\Big)\o_{u_0}^n \leq \chi(1)$$
This implies that $I_\chi(u_0,\max(u_0,u_1)) \leq \alpha +\beta = I_\chi(u_0,u_1)$ and by symmetry we also have $I_\chi(u_1,\max(u_0,u_1)) \leq \alpha +\beta = I_\chi(u_0,u_1)$. Adding these two estimates yields the first inequality of the lemma.
For the second inequality we first notice that
$$\int_X \chi\Big(\frac{\max(u_1,u_0) - u_0}{\alpha}\Big)\o_{u_0}^n + \int_X \chi\Big(\frac{\max(u_1,u_0) - u_1}{\alpha}\Big)\o_{\max(u_0,u_1)}^n =\chi(1)$$
Hence one of terms in the above sum is bigger then $\chi(1)/2$. We can assume that this is the first term, hence by convexity of $\chi$ we have
$$\int_X \chi\Big(\frac{2(\max(u_1,u_0) - u_0)}{\alpha}\Big)\o_{u_0}^n \geq\int_X 2\chi\Big(\frac{\max(u_1,u_0) - u_0}{\alpha}\Big)\o_{u_0}^n \geq {\chi(1)}$$
This implies that $I_\chi(\max(u_1,u_0) , u_0) \geq \| \max(u_1,u_0) - u_0\|_{\chi,u_0} \geq \alpha/2$. One can establish a similar estimate for $\beta$ and this finishes the proof.
\end{proof}

We can now establish the main theorem of this section:
\begin{theorem}\label{Energy_Metric_Eqv_thm}
Given $\chi \in \mathcal W^+_p$, there exists $C(p) > 1$ such that
$$\frac{1}{C}I_\chi(u_0-u_1) \leq d_{\chi}(u_0,u_1) \leq C I_\chi(u_0 - u_1), \ u_0,u_1 \in \mathcal E_{\tilde \chi}(X,\o).$$
\end{theorem}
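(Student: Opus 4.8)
The plan is to reduce the whole statement to the case of ordered pairs, where Lemma \ref{Mdist_est_verygen} already does the work: adding its two lower bounds to its single upper bound shows that for any $v\le w$ in $\mathcal E_{\tilde\chi}(X,\o)$ one has $2^{-n-3}I_\chi(v,w)\le d_\chi(v,w)\le I_\chi(v,w)$. So the theorem holds on comparable pairs, and the only task is to pass from comparable pairs to arbitrary $u_0,u_1$. The natural device is to insert the join $M:=\max(u_0,u_1)$, which lies above both potentials, so that $(u_0,M)$ and $(u_1,M)$ are comparable pairs governed by the estimate just quoted; the functional $I_\chi$ interacts with this insertion through Lemma \ref{maxaddititvity}.

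The upper bound $d_\chi\le CI_\chi$ follows cleanly from this device. By the triangle inequality $d_\chi(u_0,u_1)\le d_\chi(u_0,M)+d_\chi(M,u_1)$, and the comparable-case upper bound gives $d_\chi(u_0,M)\le I_\chi(u_0,M)$ and $d_\chi(M,u_1)\le I_\chi(M,u_1)$. The left inequality of Lemma \ref{maxaddititvity} then bounds $I_\chi(u_0,M)+I_\chi(M,u_1)$ by $2I_\chi(u_0,u_1)$, closing this direction with a constant depending only on $n$ and $p$.

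The lower bound $I_\chi\le Cd_\chi$ is the crux. Running the same reduction in reverse, the right inequality of Lemma \ref{maxaddititvity} together with the comparable-case lower bound yields $I_\chi(u_0,u_1)\le C\big(d_\chi(u_0,M)+d_\chi(M,u_1)\big)$, so the entire estimate collapses to the geometric assertion
$$ d_\chi(u_0,M)+d_\chi(u_1,M)\le C\,d_\chi(u_0,u_1). \qquad(\star) $$
This is exactly where the difficulty concentrates: $(\star)$ says that the detour through the join costs no more than the direct distance, and it is the \emph{reverse} of what contraction provides. Indeed, feeding $v=u_1$, $w=M$ (and the symmetric choice) into Proposition \ref{contractivity}, and using $P(u_0,M)=u_0$, $P(u_1,M)=u_1$, gives only $d_\chi(u_0,P)+d_\chi(u_1,P)\le d_\chi(u_0,M)+d_\chi(u_1,M)$ with $P:=P(u_0,u_1)$, which combined with \eqref{normdecompositionformula} merely reproves the upper bound and points the wrong way for $(\star)$.

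To obtain $(\star)$ I would establish the dual of the envelope decomposition \eqref{normdecompositionformula}, replacing the meet $P$ by the join $M$. Concretely, I would repeat the pushforward computation of Proposition \ref{pythagorean} along $M$ rather than $P$: partitioning $\int_X f(\dot u_0)\o_{u_0}^n$ over $\{\dot u_0>0\}$ and $\{\dot u_0<0\}$ and applying the partition formula \eqref{MA_formula} to $\max(u_0,u_1)$ should give the identity $\int_X f(\dot u_t)\o_{u_t}^n=\int_X f(\dot{\tilde v}_l)\o_{\tilde v_l}^n+\int_X f(\dot{\tilde w}_h)\o_{\tilde w_h}^n$ for the geodesics $\tilde v,\tilde w$ joining $u_0,u_1$ to $M$. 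Taking $f=\chi\big(\cdot/d_\chi(u_0,u_1)\big)$ and arguing verbatim as in the derivation of \eqref{normdecompositionformula} then forces $d_\chi(u_0,M)\le d_\chi(u_0,u_1)$ and $d_\chi(u_1,M)\le d_\chi(u_0,u_1)$, which is $(\star)$; equivalently, this is the non-expansiveness of the join $\max(u,\cdot)$ for $d_\chi$, dual to Proposition \ref{contractivity}. The passage from $u_0,u_1\in\mathcal H_\Delta$ (where geodesics and envelopes are regular, and Lemma \ref{distgeod_general} applies) to general $u_0,u_1\in\mathcal E_{\tilde\chi}(X,\o)$ is handled by monotone approximation via Proposition \ref{Mdist_est_gen}, with Lemma \ref{halwayest} and the norm-continuity limit \eqref{auxlimit} controlling the error. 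The main obstacle is precisely the verification of $(\star)$: contraction alone is insufficient, and one must manufacture the reverse comparison through the join-analogue of the Pythagorean identity.
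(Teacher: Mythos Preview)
Your upper bound is correct and matches the paper's argument exactly: triangle inequality through $M=\max(u_0,u_1)$, then Lemma~\ref{Mdist_est_verygen} on each ordered pair, then the left half of Lemma~\ref{maxaddititvity}.

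The gap is in the lower bound. Your reduction to $(\star)$ is correct, and indeed $(\star)$ appears in the paper as Remark~\ref{maxdistremark} --- but there it is derived \emph{from} Theorem~\ref{Energy_Metric_Eqv_thm}, so you need an independent proof. Your proposed route, a ``max--Pythagorean'' analogue of Proposition~\ref{pythagorean}, does not go through with the tools available. Two concrete obstructions: first, $\max(u_0,u_1)$ is in general only Lipschitz, not in $\mathcal H_\Delta$ (its Laplacian can carry a singular measure on the contact set $\{u_0=u_1\}$), so Lemma~\ref{distgeod_general} and the pushforward machinery of Lemma~\ref{pushforward_lemma} do not directly apply to the geodesics landing at $M$. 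Second, and more fundamental, the Pythagorean identity for $P$ rests on the envelope--geodesic correspondence \eqref{SublevelSetId}, which identifies the sublevel sets of $\dot u_0$ with the coincidence sets of $P(u_0,u_1-\tau)$; there is no known analogue of this for $\max$. The partition formula you invoke, \eqref{MA_formula}, is also specific to $P$; while plurifine locality gives a superficially similar decomposition of $\omega_M^n$, it carries no information about the geodesic tangents $\dot{\tilde v}_0,\dot{\tilde w}_0$. Saying the argument ``should'' go through ``verbatim'' is not a proof here.

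The paper avoids $(\star)$ entirely and instead routes the lower bound through the \emph{arithmetic mean} $(u_0+u_1)/2$. Lemma~\ref{halwayest} --- which you mention only as an error--control device --- is in fact the heart of the matter: it gives $d_\chi(u_0,(u_0+u_1)/2)\le C\,d_\chi(u_0,u_1)$. One then applies \eqref{normdecompositionformula} and Lemma~\ref{Mdist_est_verygen} to the pair $\big(u_0,\,(u_0+u_1)/2\big)$ through the envelope $P\big(u_0,(u_0+u_1)/2\big)$, obtaining both $\|u_0-P\|_{\chi,u_0}$ and $\|(u_0+u_1)/2-P\|_{\chi,(u_0+u_1)/2}$ bounded by $C\,d_\chi(u_0,u_1)$. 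The key extra step is the pointwise volume comparison $\omega_{u_0}^n\le 2^n\omega_{(u_0+u_1)/2}^n$, which lets one replace the measure $\omega_{(u_0+u_1)/2}^n$ by $\omega_{u_0}^n$ in the second norm; the triangle inequality for $\|\cdot\|_{\chi,u_0}$ then yields $\|u_0-u_1\|_{\chi,u_0}\le C\,d_\chi(u_0,u_1)$, and symmetry finishes. None of this needs a max--analogue of contractivity or of the Pythagorean decomposition.
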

\begin{proof} The second estimate follows easily:
\begin{flalign*}
d_{\chi}(u_0,u_1) &\leq
d_{ \chi}(u_0,\max(u_0,u_1)) + d_{ \chi}(\max(u_0,u_1),u_1)\\
&\leq I_\chi(u_0,\max(u_0,u_1)) + I_\chi(\max(u_0,u_1),u_1)\\
&\leq C I_\chi(u_0,u_1),
\end{flalign*}
where in the second line we have used Lemma \ref{Mdist_est_verygen} and in the third Lemma \ref{maxaddititvity}. Now we deal with the first estimate. By the previous result, \eqref{normdecompositionformula} and Lemma \ref{Mdist_est_verygen} we can write
\begin{flalign*}
d_\chi(u_0,u_1) &\geq C d_\chi\Big(u_0,\frac{u_0 + u_1}{2}\Big) \geq Cd_\chi\Big(u_0,P\Big(u_0,\frac{u_0 + u_1}{2}\Big)\Big)\\
&\geq C\Big\|u_0 - P\Big(u_0,\frac{u_0 + u_1}{2}\Big)\Big\|_{\chi,u_0}.
\end{flalign*}
By the same reasoning as above and the fact that $2^n \o^n_{(u_0 + u_1)/2} \geq \o^n_{u_0}$ we can write:
\begin{flalign*}
d_\chi(u_0,u_1) &\geq C d_\chi\Big(u_0,\frac{u_0 + u_1}{2}\Big) \geq C d_\chi\Big(\frac{u_0+u_1}{2},P\Big(u_0\frac{u_0 + u_1}{2}\Big)\Big)\\
&\geq C\Big\|\frac{u_0+u_1}{2} - P\Big(u_0\frac{u_0 + u_1}{2}\Big)\Big\|_{\chi,(u_0 + u_1)/2}\\
&\geq C\Big\|\frac{u_0+u_1}{2} - P\Big(u_0\frac{u_0 + u_1}{2}\Big)\Big\|_{\chi,u_0}.
\end{flalign*}
Using the triangle inequality and the last two estimates, we obtain:
\begin{flalign*}
d_\chi(u_0,u_1)&\geq C\Big[\Big\|u_0 - P\Big(u_0\frac{u_0 + u_1}{2}\Big)\Big\|_{\chi,u_0} + \Big\|\frac{u_0+u_1}{2} - P\Big(u_0\frac{u_0 + u_1}{2}\Big)\Big\|_{\chi,u_0}\Big]\\
&\geq C\|u_0 - u_1\|_{\chi,u_0}.
\end{flalign*}
By symmetry we also obtain $d_\chi(u_0,u_1) \geq C\|u_0 - u_1\|_{\chi,u_1}$ and adding these last two estimates together the result follows.
\end{proof}

\begin{remark} \label{maxdistremark} From the previous two results it follows that there exists $C(p) >1$ for which
\begin{flalign*}
\frac{1}{C} (d_\chi(u_0, \max(u_0,u_1)) + &d_\chi(u_1, \max(u_0,u_1))) \leq d_\chi(u_0,u_1) \\
&\leq C(d_\chi(u_0, \max(u_0,u_1)) + d_\chi(u_1, \max(u_0,u_1))),
\end{flalign*}
for any $u_0,u_1 \in \mathcal E_{\tilde \chi}(X,\o)$. These estimates are the "$\max$" analog of the ones in \eqref{normdecompositionformula}.
\end{remark}

For $u_0,u_1 \in \mathcal E^1(X,\o)$, as proposed in \cite{bbgz}, one can introduce another notion of energy:
\begin{equation}\label{mathcalIdef}
\mathcal I(u_0,u_1) = \int_X (u_0 - u_1)(\o_{u_1}^n - \o_{u_0}^n).
\end{equation}
This functional will be useful for us in the proof of the next theorem.
\begin{corollary}\label{MabConvCor} Suppose $\chi \in \mathcal W^+_p$ and $d_\chi(u_k,u) \to 0$ with $\ u_k,u \in \mathcal E_{\tilde \chi}(X,\o)$. Then $u_k \to u$ in capacity in the sense of \cite{k}. In particular, $\o_{u_k}^n \to \o_{u}^n$ weakly.

Also, for any $v \in \mathcal E_1(X,\o)$ and $R > 0$ there exists a continuous increasing function $f_R:\Bbb R_+ \to \Bbb R_+$ with $f(0)=0$ such that
\begin{equation}\label{L1MabEst}
\int_X |u_0-u_1| \o_v^n \leq f(d_1(u_0,u_1)),
\end{equation}
for $u_0,u_1 \in \mathcal E_1(X,\o)$ with $d_1(0,u_0),d_1(0,u_1) \leq R$.
\end{corollary}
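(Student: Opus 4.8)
The plan is to push everything down to the $L^1$ metric $d_1$ and to the concrete energy $I_1$. Since every Orlicz norm dominates the $L^1$ norm \cite{rr}, one has $d_1\le C\,d_\chi$ and $\mathcal E_{\tilde\chi}(X,\o)\subset\mathcal E^1(X,\o)$, so $d_\chi$-convergence entails $d_1$-convergence and it suffices to work with $d_1$. Theorem \ref{Energy_Metric_Eqv_thm} applied to the weight $\chi_1(l)=|l|$ turns the metric into energy, $\tfrac1C I_1\le d_1\le C\,I_1$, where $I_1(u_0,u_1)=\int_X|u_0-u_1|\o_{u_0}^n+\int_X|u_0-u_1|\o_{u_1}^n$. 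Expanding \eqref{mathcalIdef} and bounding each term by its modulus also gives $0\le\mathcal I(u_0,u_1)\le I_1(u_0,u_1)$, a comparison I will use repeatedly.

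For the first assertion, $d_\chi(u_k,u)\to0$ yields $I_1(u_k,u)\to0$ by the above, which is exactly convergence in energy; by Proposition \ref{strongchiconveqv} this is strong convergence in the sense of \cite[Section 2.1]{bbegz}. The stability results of \cite{bbegz,bbgz} then give convergence in capacity in the sense of \cite{k}, and convergence in capacity together with the uniform energy bound yields the weak convergence $\o_{u_k}^n\to\o_u^n$ of the non-pluripolar Monge--Amp\`ere measures. Once $d_\chi\sim I_1$ is in hand, this part is bookkeeping.

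For the estimate \eqref{L1MabEst} I would argue directly and uniformly on the $d_1$-ball $\{d_1(0,\cdot)\le R\}$, which by Theorem \ref{Energy_Metric_Eqv_thm} is an energy-bounded family. Writing $m=\max(u_0,u_1)\ge u_0,u_1$ and splitting according to $\{u_0\ge u_1\}$ gives $\int_X|u_0-u_1|\o_v^n\le\int_X(m-u_0)\o_v^n+\int_X(m-u_1)\o_v^n$, while Remark \ref{maxdistremark} shows $d_1(u_i,m)\le C\,d_1(u_0,u_1)$ and $d_1(0,m)\le R'$; this reduces matters to a single ordered pair $\psi:=u_i\le\phi:=m$ with bounded energy. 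For such a pair, $I_1(\psi,\phi)\le C\,d_1(\psi,\phi)$ makes $\int_X(\phi-\psi)\o_\phi^n$ small, and the comparison principle of \cite{k} bounds $\textup{Cap}(\{\phi-\psi>t\})$ by $C(t)\int_X(\phi-\psi)\o_\phi^n$ for each fixed $t>0$, so $\phi-\psi\to0$ in capacity with a quantitative modulus. Finally, splitting $\int_X(\phi-\psi)\o_v^n$ over $\{\phi-\psi\le t\}$ and its complement and using the uniform integrability of energy-bounded families against $\o_v^n$ provided by \cite[Proposition 3.6]{gz} (recall $\o_v^n$ charges no pluripolar set), one controls $\int_X(\phi-\psi)\o_v^n$ by a modulus of $\int_X(\phi-\psi)\o_\phi^n$, hence of $d_1(\psi,\phi)$; assembling these moduli produces the desired $f_R$.

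The main obstacle is precisely the transfer from the intrinsic measures $\o_{u_0}^n,\o_{u_1}^n$, against which $I_1$ (equivalently $d_1$) provides control, to the fixed external measure $\o_v^n$. This transfer is what forces the two genuinely pluripotential-theoretic inputs above: the comparison-principle capacity estimate, which converts $L^1(\o_\phi^n)$-smallness of $\phi-\psi$ into smallness in capacity, and the capacity-uniform integrability of \cite{gz}, which converts capacity-smallness into $L^1(\o_v^n)$-smallness. The energy bound $d_1(0,\cdot)\le R$ is indispensable for the second of these; without it the comparison of $\o_v^n$ with the varying measures breaks down and no modulus $f_R$ can exist. Everything else follows routinely from Theorem \ref{Energy_Metric_Eqv_thm}.
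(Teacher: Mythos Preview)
Your argument has two issues worth flagging, one structural and one substantive.

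\textbf{Circularity in the first part.} You invoke Proposition \ref{strongchiconveqv} to pass from $I_1(u_k,u)\to0$ to strong convergence, but in the paper the very direction of Proposition \ref{strongchiconveqv} you need is proved \emph{using} Corollary \ref{MabConvCor}. The paper avoids this by reversing the order: it establishes \eqref{L1MabEst} first, specializes to $v=0$ to obtain $\int_X|u_k-u|\,\o^n\to0$, and then applies \cite[Theorem 5.7]{bbgz} (together with the $d_1$-control of $\sup_X$ and $AM$ from Corollary \ref{supcorollary} and Lemma \ref{AMcont}) to conclude convergence in capacity. If you reorder your proof the same way, the circularity disappears.

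\textbf{The route to \eqref{L1MabEst}.} Here your approach genuinely diverges from the paper's and has a gap. The paper uses \cite[Lemma 5.8]{bbgz} as a black box: for potentials with $\sup_X$ and $AM$ bounded by $R$, one has $\int_X(u_0-u_1)(\o_{v_1}^n-\o_{v_2}^n)\le f_R(\mathcal I(u_0,u_1))$. Taking $v_1=v$, $v_2=u_0$ turns this into a bound on $\int_X(u_0-u_1)\o_v^n$ in terms of $d_1(u_0,u_1)$ (since $d_1$ controls $\mathcal I$, $\int_X(u_0-u_1)\o_{u_0}^n$, $\sup_X$, and $AM$). The absolute value is then recovered via the identity $|u_0-u_1|=2(\max(u_0,u_1)-u_1)-(u_0-u_1)$ and Remark \ref{maxdistremark}, exactly your $\max$-reduction.

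Your alternative route through capacity and ``uniform integrability'' is where the gap sits. The step ``$\textup{Cap}(E)$ small $\Rightarrow \int_E(\phi-\psi)\o_v^n$ small, uniformly for $\phi,\psi$ in a $d_1$-ball'' is not a consequence of \cite[Proposition 3.6]{gz}: that proposition only bounds $\int_X\chi(u)\o_v^n$ by energy, it says nothing about uniform smallness on sets of small capacity. For $\chi=\chi_1$ there is no extra integrability to trade (no $L^{1+\delta}$ cushion), so uniform integrability of an $\mathcal E^1$-bounded family against a fixed $\o_v^n$ is precisely the nontrivial content one is trying to prove; you cannot assume it. The paper's appeal to \cite[Lemma 5.8]{bbgz} is doing real work here, and that lemma (an integration-by-parts estimate in the spirit of the $\mathcal I$-functional) is the missing analytic input in your outline.
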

\begin{proof}As $d_1$ is dominated by all $d_\chi$ norms, we will only establish the result for $d_1$. We start with \eqref{L1MabEst}. From \cite[Lemma 5.8]{bbgz} and its proof it follows that for any $R >0$ there exists an increasing continuous function $f_R:\Bbb R_+ \to \Bbb R_+$ with $f(0)=0$ such that
\begin{equation}\label{BBGZLemmaestimate}
\int_X (u_0 - u_1)(\o_{v_1}^n - \o_{v_2}^n) \leq f(\mathcal I(u_0,u_1))
\end{equation}
for any $u_0, u_1, v_1,v_2$ having supremum less then $R$ and Aubin-Mabuchi energy greather then $-R$. By and Corollary \ref{supcorollary} and Lemma \ref{AMcont}, $d_1(0,u_0)$ dominates both $\sup_X u_0$ and $AM(u_0)$. Also by Theorem \ref{Energy_Metric_Eqv_thm}, $d_1(u_0,u_1)$ dominates $\mathcal I(u_0,u_1)$ and $\int_X (u_0-u_1)\o_{u_0}^n$. Hence, choosing $v_1 =v$ and $v_2 = u_0$, we can rewrite \eqref{BBGZLemmaestimate} in the following way
\begin{equation}\label{L1estimate}
\int_X (u_0 - u_1)\o_{v}^n \leq \tilde f(d_{1}(u_0,u_1)),
\end{equation}
for any $u_0,u_1 \in \mathcal E_1(X,\o)$ satisfying $d_1(0,u_0),d_1(0,u_1) \leq R$.
Using this, the identity
$$\int_X |u_0 - u_1|\o_v^n = 2 \int_X (\max(u_0,u_1) - u_1)\o_v^n -  \int_X (u_0 - u_1)\o_v^n,$$
and Remark \ref{maxdistremark}, estimate \eqref{L1MabEst} follows.

Now we prove that $d_\chi(u_k,u) \to 0$ implies $u_k \to u$ in capacity. This again follows from the results of \cite{bbgz}. Indeed, by \eqref{L1MabEst}, it follows that
$$\int_X u_k \o^n \to \int_X u \o^n.$$
Now using \cite[Theorem 5.7]{bbgz} it follows that $u_k \to u$ in capacity. Finally, $u_k \to u$ in capacity (for  $u_k,u \in \mathcal E(X,\o)$) is known to imply the weak convergence of measures $\o_{u_k}^n \to \o_{u}^n$ (see \cite{dp}, compare \cite[Proposition 5.6]{bbgz}).
\end{proof}

Building on this last result, we prove another continuity theorem:

\begin{theorem} \label{FlatImmersion}Suppose $\chi \in \mathcal W^+_p$ and $u_k, u, v \in \mathcal E_{\tilde \chi}(X,\o)$ with $d_\chi(u_k,u) \to 0$. Then $u_k,u \in L^\chi(\o_v^n)$ and
\begin{equation}\label{LchiMabuchi}
\| u_k - u\|_{\chi,v} \to 0.
\end{equation}
\end{theorem}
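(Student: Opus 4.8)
The plan is to reduce \eqref{LchiMabuchi} to an integral statement and then argue by a Vitali-type convergence theorem. The membership $u_k,u \in L^\chi(\o_v^n)$ is immediate from \cite[Proposition 3.6]{gz}, and by Proposition \ref{NormIntegralEst} the conclusion is equivalent to showing $\int_X \chi(u_k - u)\,\o_v^n \to 0$. Since $\chi(u_k-u)\ge 0$, it suffices to prove that $\chi(u_k-u)\to 0$ in $\o_v^n$--measure and that $\{\chi(u_k-u)\}_k$ is uniformly $\o_v^n$--integrable: the former will control the region where $|u_k-u|$ is small, the latter the tail where it is large.

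For the convergence in measure I would use Corollary \ref{MabConvCor}. As $d_1\le d_\chi$, the sequence is $d_1$--bounded and $d_1(u_k,u)\to 0$, so \eqref{L1MabEst} yields $\int_X|u_k-u|\,\o_v^n\to 0$; in particular $u_k\to u$ in $\o_v^n$--measure, and since $\chi$ is continuous with $\chi(0)=0$ we get $\chi(u_k-u)\to 0$ in measure. On each set $\{|u_k-u|\le L\}$ the integrand is bounded by $\chi(L)$, so bounded convergence gives $\int_{\{|u_k-u|\le L\}}\chi(u_k-u)\,\o_v^n\to 0$ for every fixed $L$.

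The main difficulty is the uniform integrability, and this is where I expect the real work to lie. The idea is to dominate the entire family by a single finite energy potential. After passing to a subsequence with $\sum_k d_\chi(u_{k+1},u_k)<\infty$ (harmless, via a standard subsequence argument for the full statement), I would set $\varphi_N=P(v,u,u_1,\dots,u_N)$. Writing $\varphi_N=P(\varphi_{N-1},u_N)$ and using $P(\varphi_{N-1},u_{N-1})=\varphi_{N-1}$, the contractivity of $P$ from Proposition \ref{contractivity} gives $d_\chi(\varphi_N,\varphi_{N-1})\le d_\chi(u_N,u_{N-1})$. Hence $\{\varphi_N\}$ is decreasing and $d_\chi$--bounded, so by Lemma \ref{mononton_seq} its limit $\varphi$ lies in $\mathcal E_{\tilde\chi}(X,\o)$ and satisfies $\varphi\le u_k,u,v$ for all $k$. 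Normalizing so that $u_k,u\le 0$, one has $\varphi\le u_k-u\le -\varphi$, i.e.\ the pointwise domination $|u_k-u|\le|\varphi|$.

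To conclude, I would choose a weight $\psi$ growing faster than $\chi$, so that $\chi(l)/\psi(l)$ decreases to $0$ as $l\to\infty$, with $\varphi\in\mathcal E_{\tilde\psi}(X,\o)$; this is always possible for a single finite energy potential (cf.\ \cite[Section 3]{gz}, as in the preceding lemmas). Since $\psi$ is even and increasing, the pointwise bound gives $\psi(u_k-u)\le\psi(\varphi)$, and $\int_X\psi(\varphi)\,\o_v^n=:M<\infty$ by \cite[Proposition 3.6]{gz}. Then on the tail $\{|u_k-u|>L\}$ the monotonicity of $\chi/\psi$ yields
\[
\int_{\{|u_k-u|>L\}}\chi(u_k-u)\,\o_v^n\le\frac{\chi(L)}{\psi(L)}\int_X\psi(u_k-u)\,\o_v^n\le\frac{\chi(L)}{\psi(L)}\,M,
\]
uniformly in $k$, which tends to $0$ as $L\to\infty$. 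Combining this with the bounded part gives $\limsup_k\int_X\chi(u_k-u)\,\o_v^n\le \chi(L)M/\psi(L)$ for every $L$, hence the integral tends to $0$. The only genuinely delicate points are the construction of the common minorant $\varphi$ and the pointwise domination $|u_k-u|\le|\varphi|$; once these are secured, the uniform integrability, and with it the theorem, follows.
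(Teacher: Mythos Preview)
Your proof is correct and follows essentially the same strategy as the paper: pass to a sparse subsequence, build a common minorant in $\mathcal E_{\tilde\chi}$ via iterated rooftop envelopes and the contractivity of $P$ (Proposition \ref{contractivity}, Lemma \ref{mononton_seq}), and use it to dominate $|u_k-u|$. The paper then invokes convergence in capacity (hence a.e.) and the dominated convergence theorem with dominant $\chi(M-h)$, where $M$ is the uniform upper bound from Corollary \ref{supcorollary}; your ``normalize so that $u_k,u\le 0$'' is exactly this step in disguise.

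The one substantive difference is that you take a detour through a faster weight $\psi$ and a Vitali--type uniform integrability argument. This is unnecessary: once you have $|u_k-u|\le M-\varphi$ with $\varphi\in\mathcal E_{\tilde\chi}$, the single function $\chi(M-\varphi)$ already lies in $L^1(\o_v^n)$ by \cite[Proposition 3.6]{gz} (both $\varphi$ and $v$ are in $\mathcal E_{\tilde\chi}$), so ordinary dominated convergence applies directly. Your inclusion of $v$ in the envelope is what makes the $\psi$--step go through (since $\varphi\le v$ forces $v\in\mathcal E_{\tilde\psi}$, which \cite[Proposition 3.6]{gz} needs), but it is not needed for the simpler argument. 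In short: same core idea, with the paper's packaging being a bit leaner.
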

\begin{proof} The fact that $u_k,u \in L^\chi(\o_v^n)$ follows from \cite[Proposition 3.6]{gz}. By Proposition \ref{NormIntegralEst} to prove \eqref{LchiMabuchi}, it suffices to argue that
$$\int_X \chi(u_k - u) \o_v^n \to 0.$$
Given an arbitrary subsequence of $u_k$, there exists a sub-subsequence, again denoted by $u_k$, satisfying the sparsity condition:
\begin{equation}\label{sparsity}
d_\chi(u_k,u_{k+1}) \leq \frac{1}{2^k}, \ \ k \in \Bbb N.
\end{equation}
Using the sparsity condition and Proposition \ref{contractivity} we can write
\begin{flalign*}
d_\chi(P(u,u_0,\ldots,u_k),P(u,&u_0,\ldots,u_{k+1}))= \\ &=d_\chi(P(P(u,u_0,\ldots,u_k),u_k),P(P(u,u_0,\ldots,u_k),u_{k+1}))\\
&\leq d_\chi(u_k,u_{k+1}) \leq \frac{1}{2^k}.
\end{flalign*}
Hence, the decreasing sequence $h_k = P(u,u_0,u_1, \ldots,u_k), \ k \geq 1$ is bounded and Lemma \ref{mononton_seq} implies now that the decreasing limit $\lim_k h_k= h = P(u,u_0,u_1,u_2, \ldots) \in \mathcal E_{\tilde \chi}(X,\o).$

By Corollary \ref{supcorollary}, there exists $M >0$ such that
$h \leq u_k,u \leq M$. Putting everything together we have
$$h - M \leq u_k - u \leq M - h.$$
This implies that $\int_X \chi(u_k - u) \o_v^n  \leq \int_X \chi(M - h) \o_v^n < \infty.$ As $d_\chi(u_k,u) \to 0$ implies $u_k \to u$ in capacity, which in turn implies pointwise convergence a.e., by the dominated convergence theorem it follows that $\int_X \chi(u_k - u) \o_v^n \to 0$, finishing the proof.
\end{proof}
As promised we argue that the strong convergence introduced in \cite[Section 2.1]{bbegz} is equivalent to $\chi_1$-convergence. This results is well known to experts, however there does not seem to exist an adequate reference for it in the literature:

\begin{proposition}\label{strongchiconveqv} Suppose $u_k,u \in \mathcal E^1(X,\o)$. Then $\int_X |u_k - u|\o^n \to 0$ and $AM(u_k) \to AM(u)$ if and only if $I_1(u_k,u) \to 0$.
\end{proposition}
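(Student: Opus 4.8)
The plan is to first unwind the definitions. Under the normalization $\textup{Vol}(X)=1$ in force here and with $\chi_1(l)=|l|$, the gauge norm is just the ordinary $L^1$-norm, $\|f\|_{\chi_1,w}=\int_X|f|\,\omega_w^n$, so that $I_1(u_k,u)=\int_X|u_k-u|\,\omega_{u_k}^n+\int_X|u_k-u|\,\omega_u^n$ and the two conditions on the left-hand side are precisely the definition of strong convergence in \cite[Section 2.1]{bbegz}. I would then treat the two implications separately: the forward one follows almost immediately from the theorems already established, whereas the reverse one is the substantive part and will rest on an upper semicontinuity property of the Aubin--Mabuchi energy.

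For the direction $I_1(u_k,u)\to 0 \Rightarrow$ strong convergence, I would argue as follows. By Theorem \ref{Energy_Metric_Eqv_thm} applied with $\chi=\chi_1$ we get $d_1(u_k,u)\to 0$. Feeding this into Theorem \ref{FlatImmersion} with the choice $v=0$ (so that $\omega_v^n=\omega^n$) yields $\int_X|u_k-u|\,\omega^n=\|u_k-u\|_{\chi_1,0}\to 0$, which is exactly $L^1$-convergence. Finally Lemma \ref{AMcont} gives $|AM(u_k)-AM(u)|\le C\,d_1(u_k,u)\to 0$, hence $AM(u_k)\to AM(u)$.

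For the reverse direction, assume $\int_X|u_k-u|\,\omega^n\to 0$ and $AM(u_k)\to AM(u)$; by Theorem \ref{Energy_Metric_Eqv_thm} it suffices to prove $d_1(u_k,u)\to 0$. Setting $w_k=\max(u_k,u)\in\mathcal E^1(X,\omega)$, Remark \ref{maxdistremark} reduces this to $d_1(u_k,w_k)+d_1(u,w_k)\to 0$. Since $u_k\le w_k$ and $u\le w_k$, the identity $d_1(a,b)=AM(b)-AM(a)$ for $a\le b$ (established in the proof of \eqref{1-pythagorean_formula}) gives $d_1(u_k,w_k)+d_1(u,w_k)=2AM(w_k)-AM(u_k)-AM(u)$; as $AM(u_k)\to AM(u)$, the whole matter reduces to showing $AM(w_k)\to AM(u)$. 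Monotonicity of $AM$ together with $w_k\ge u$ gives $AM(w_k)\ge AM(u)$. In the other direction, $0\le w_k-u=(u_k-u)^+$ and $\int_X(w_k-u)\,\omega^n\le\int_X|u_k-u|\,\omega^n\to 0$, so $w_k\to u$ in $L^1(\omega^n)$; since $AM(u_k)\to AM(u)$ and the suprema $\sup_X u_k$ are bounded (because $\int_X u_k\,\omega^n\to\int_X u\,\omega^n$ and $\sup_X\phi\le\int_X\phi\,\omega^n+C$ for $\omega$-psh $\phi$), the sequence $\{w_k\}$ stays in a region of bounded energy and bounded supremum. The upper semicontinuity of the Aubin--Mabuchi energy with respect to $L^1$-convergence \cite{bbgz} then yields $\limsup_k AM(w_k)\le AM(u)$. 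Combining the two inequalities gives $AM(w_k)\to AM(u)$, which finishes the proof.

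The hard part is precisely this reverse direction, and within it the single step $AM(\max(u_k,u))\to AM(u)$: the lower bound $AM(w_k)\ge AM(u)$ is free from monotonicity, but the matching upper bound genuinely requires the (non-elementary) upper semicontinuity of the energy functional under $L^1$-convergence, which is where the finite-energy theory of \cite{bbgz,bbegz} is essential. Everything else is a bookkeeping reduction built from the max- and $P(\cdot,\cdot)$-estimates already in hand.
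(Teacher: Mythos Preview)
Your argument is correct, and the forward direction is essentially the paper's (you invoke Theorem \ref{FlatImmersion} where the paper cites Corollary \ref{MabConvCor}, which is the same content).

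For the reverse direction you take a genuinely different route. The paper invokes \cite[Proposition 2.3]{bbegz} to obtain $\mathcal I(u_k,u)\to 0$, observes that $\mathcal I(\max(u_k,u),u)\le\mathcal I(u_k,u)$ by plurifine locality, and then uses the estimate \eqref{BBGZLemmaestimate} from \cite{bbgz} (as in Corollary \ref{MabConvCor}) to control the signed integrals $\int_X(u_k-u)\,\omega_{u_k}^n$, $\int_X(u_k-u)\,\omega_u^n$ and their $\max$-analogues, recombining them via $|u_k-u|=2(\max(u_k,u)-u)-(u_k-u)$. You instead reduce to $d_1(u_k,u)\to 0$ via Theorem \ref{Energy_Metric_Eqv_thm}, use the identity $d_1(a,b)=AM(b)-AM(a)$ for $a\le b$ (which is just \eqref{1-pythagorean_formula} with $P(a,b)=a$) to rewrite $d_1(u_k,w_k)+d_1(u,w_k)=2AM(w_k)-AM(u_k)-AM(u)$, and close with the upper semicontinuity of $AM$ under $L^1$-convergence. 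Your approach is shorter and the external input is lighter (only the usc of $AM$, a basic fact, versus the paper's use of both \cite[Proposition 2.3]{bbegz} and \cite[Lemma 5.8]{bbgz}); the price is that you pass through the equivalence $d_1\asymp I_1$, so your argument is not independent of Theorem \ref{Energy_Metric_Eqv_thm}. Two cosmetic points: the appeal to Remark \ref{maxdistremark} is unnecessary since the plain triangle inequality $d_1(u_k,u)\le d_1(u_k,w_k)+d_1(u,w_k)$ already suffices; and the bounded-sup/bounded-energy remark is not needed, as the usc of $AM$ holds without such restrictions.
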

\begin{proof} As follows from Lemma \ref{AMcont} and Corollary \ref{MabConvCor}, $I_1(u_k,u) \to 0$ implies $\int_X |u_k - u|\o^n \to 0$ and $AM(u_k) \to AM(u)$. Now we argue the other direction. By \cite[Propostion 2.3]{bbegz} it follows that $\mathcal I(u_k,u) \to 0$. As in the proof of Corollary \ref{MabConvCor}, we can conclude that $\int_X (u_k - u)\o^n_{u_k} \to 0$ and $\int_X (u_k - u)\o^n \to 0$. Using the locality of the complex Monge-Amp\'ere measure in the plurifine topology we observe
$$\mathcal I(\max(u_k,u),u)= \int_{\{u_k > u\}} (u_k - u)(\o^n_{u} - \o^n_{u_k}) \leq \mathcal I(u_k,u),$$
hence we also have $\mathcal I(\max(u_k,u),u) \to 0$. By similar considerations as above we get $\int_X (\max(u_k,u) - u)\o^n_{u_k} \to 0$ and $\int_X (\max(u_k,u) - u)\o^n \to 0$.
This concludes the argument as we note that $|u_k - u|= 2(\max(u_k,u) - u)-(u_k - u).$
\end{proof}

\section{Applications to K\"ahler-Einstein metrics}

For this section we assume that $(X,J,\o)$ is Fano, i.e. $c_1(X)=[\o]$. We review a few facts about convergence of the K\"ahler--Ricci flow. Under the normalization $\int_X e^{-\dot r_t}\o^n = \textup{Vol}(X),$
equation \eqref{RicciflowEq} can be rewritten as the scalar equation
$$e^{\dot r_t - r_t  - h - \log \left({\int_X e^{-r_t - h}\o^n}\right) } \o^n= {\o_{r_t}^n}.$$
We refer to \cite[Section 6.2]{beg} for details. As it follows from a theorem of Perelman and work of Chen-Tian, Tian-Zhu and Phong-Song-Sturm-Weinkove, $\o_{r_t}$ converges exponentially fast to some K\"ahler-Einstein metric $\o_{u_{KE}}$, whenever such metric exists (see \cite{ct}, \cite{tz}, \cite{pssw}). More precisely we have $\| {d \o_{r_t}}/{dt}\|_{\o}=\| i\del\dbar \dot r_t\|_{\o} \leq C e^{-Ct}$ for some $C >0$. Given our normalization assumption, this implies that $\|\dot r_t\|_{L^\infty(X)} \leq C e^{-Ct}$, in particular $\|r_t - u_{KE}\|_{L^\infty(X)} \to 0.$
This will be the starting point in establishing the equivalence between (i) and (ii) in Theorem \ref{ApplKE}:
\begin{theorem}\label{MR-stab} There exists a K\"ahler-Einstein metric cohomologous to $\o$ if and only if $(X,J,\o)$ is $\chi$--stable for any $\chi \in \mathcal W^+_p, \ p \geq 1$.
\end{theorem}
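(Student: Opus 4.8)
The plan is to prove the two implications separately, handling the forward direction by a direct length estimate and the backward direction by its contrapositive.

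For the forward implication, suppose a K\"ahler--Einstein metric cohomologous to $\o$ exists. Fix $\chi \in \mathcal W^+_p$ and an arbitrary starting point $v \in \mathcal H_{AM}$, with corresponding trajectory $t \to r_t$. By the convergence results recalled above, $\|\dot r_t\|_{L^\infty(X)} \leq Ce^{-Ct}$. I would then bound the $d_\chi$--length of the tail of this trajectory directly: for $t$ large enough that $Ce^{-Ct} \leq 1$, the pointwise bound $|\dot r_t| \leq Ce^{-Ct}$ together with \eqref{GrowthControl} (applied with $\eps = Ce^{-Ct}$ and $l=1$) gives $\int_X \chi(\dot r_t)\o_{r_t}^n \leq Ce^{-Ct}\chi(1)\textup{Vol}(X)$, whence Proposition \ref{NormIntegralEst} yields $\|\dot r_t\|_{\chi,r_t} \leq M_{1/p}(Ce^{-Ct}) = C'e^{-Ct/p}$. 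Since this is integrable on $[0,\infty)$ and the trajectory is smooth on every compact time interval, the total length $\int_0^\infty \|\dot r_t\|_{\chi,r_t}\,dt$ is finite. Hence $t \to r_t$ is $d_\chi$--bounded and does not diverge; as $\chi$ and $v$ were arbitrary, $(X,J,\o)$ is $\chi$--stable for every $\chi \in \mathcal W^+_p$.

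For the backward implication I would argue by contraposition: assuming no K\"ahler--Einstein metric exists, I will produce a trajectory that diverges in $d_\chi$ for some (in fact every) $\chi$. Since every Orlicz norm dominates the $L^1$ norm we have $d_1 \leq d_\chi$, so it suffices to find a single $d_1$--unbounded trajectory. Suppose instead that a trajectory $r_t$ is $d_1$--bounded. Then Corollary \ref{supcorollary} bounds $\sup_X r_t$, while Theorem \ref{Energy_Metric_Eqv_thm} bounds $I_1(0,r_t)$ and hence the Aubin--Mabuchi energy and $\int_X |r_t|\,\o^n$; consequently $\{r_t\}$ is relatively compact in the weak topology of $\mathcal E^1(X,\o)$. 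The Ding functional $\mathcal F$ decreases along the flow and, by Theorem \ref{MabConvergence}, is $d_1$--continuous, so it is bounded below on this compact family. A monotone functional bounded below has finite total decrease, and since this decrease controls the accumulated dissipation of the flow, the velocity $\dot r_t$ must tend to zero along some sequence $t_k \to \infty$. The subsequential limit is then a critical point of $\mathcal F$, that is a K\"ahler--Einstein potential, contradicting our assumption. Thus some trajectory is $d_1$--unbounded and $(X,J,\o)$ is $\chi$--unstable.

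The main obstacle is the backward direction. The two delicate analytic points are, first, upgrading $d_1$--boundedness of the trajectory to an honest lower bound for $\mathcal F$ along the flow, which requires uniform integrability of $e^{-r_t}$ over the energy--bounded family in the spirit of \cite{bbgz}; and second, converting the finiteness of the total dissipation into genuine subsequential convergence of the flow to a \emph{smooth} K\"ahler--Einstein metric. This last step leans on the convergence theory of the K\"ahler--Ricci flow (Perelman's estimates together with \cite{ct,tz,pssw}), which already accommodates the degenerate behaviour caused by holomorphic vector fields; this is precisely why no separate hypothesis on $\textup{Aut}(X,J)$ is needed for the present equivalence.
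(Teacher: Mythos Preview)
Your forward direction is correct and essentially coincides with the paper's: both feed the exponential $L^\infty$--decay $\|\dot{\tilde r}_t\|_{L^\infty}\le Ce^{-Ct}$ from the convergence theory of the flow into an argument giving $d_\chi$--convergence of the trajectory. The paper does this by passing through uniform convergence of the potentials and then using the $AM$--continuity to transfer from the normalization $\int_X e^{-\dot{\tilde r}_t}\o^n=\textup{Vol}(X)$ to the $AM$--normalization; you instead bound the $\chi$--length. One small point: the estimate $\|\dot r_t\|_{L^\infty}\le Ce^{-Ct}$ is stated for the former normalization, so you should remark that $\dot r_t=\dot{\tilde r}_t-\tfrac{d}{dt}AM(\tilde r_t)$ and $|\tfrac{d}{dt}AM(\tilde r_t)|\le \|\dot{\tilde r}_t\|_{L^\infty}$, whence the same exponential bound holds for the $AM$--normalized flow up to a factor of $2$.

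The backward direction is where your proposal diverges from the paper, and where the real gap lies. The paper's argument is short and avoids all the analytic difficulties you flag: invoking \cite[Lemma 6.5]{cr}, if no K\"ahler--Einstein metric exists then along \emph{any} trajectory one can find $t_j\to\infty$ and a proper subvariety $S\subset X$ with $\int_K \o_{r_{t_j}}^n\to 0$ for every compact $K\subset X\setminus S$. If $r_{t_j}$ had a $d_\chi$--limit $u\in\mathcal E_{\tilde\chi}(X,\o)$, Theorem \ref{MabConvergence}(i) would force $\o_{r_{t_j}}^n\to\o_u^n$ weakly, so $\o_u^n$ would be supported on the pluripolar set $S$, contradicting the non--pluripolar construction of $\o_u^n$. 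That is all. Your route instead attempts to manufacture a K\"ahler--Einstein potential out of a $d_1$--bounded trajectory via compactness and a dissipation argument; but the step ``the subsequential limit is a smooth critical point'' is exactly where one would ordinarily invoke \cite{ct,tz,pssw}, and those results \emph{presuppose} the existence of a K\"ahler--Einstein metric, so appealing to them here is circular. Filling this in without that hypothesis is essentially as hard as the whole problem, whereas the measure--concentration argument sidesteps it entirely.
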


\begin{proof} Suppose that there exists K\"ahler-Einstein metric cohomologous to $\o$ and $t \to \tilde r_t$ is a Ricci trajectory normalized by $\int_X e^{-d{{\tilde r}_t/dt}}\o^n = \textup{Vol}(X)$ with initial metric $v \in \mathcal H_{AM}$. By what we recalled above, $\tilde r_t$ converges to some K\"ahler-Einstein potential $\tilde u_{KE} \in \mathcal H$ uniformly.

As $AM(\cdot)$ is continuous with respect to uniform convergence,  it follows that the "AM--normalized" K\"ahler--Ricci trajectory $t \to r_t = \tilde r_t - AM(\tilde r_t) \in \mathcal H_{AM}$ also converges to $u_{KE} = \tilde u_{KE} - AM(\tilde u_{KE})$ uniformly, hence also with respect to $d_\chi$, finishing one direction of the proof.

For the other direction, let us assume that there exists no K\"ahler-Einstein metric cohomologous to $\o$. By \cite[Lemma 6.5]{cr}, along any K\"ahler--Ricci trajectory $[0,\infty) \ni t \to r_t \in \mathcal H_{AM}$, one can find a sequence $t_j \to \infty$ and an non-empty subvariety $S \subset X$ of positive dimension such that
$$\lim_{j \to \infty}\int_K \o_{r_{t_j}}^n=0,$$
for any $K \subset X \setminus S$ compact.
This implies that there is no $u \in \mathcal E_{\tilde \chi}(X,\o)$ satisfying $d_\chi(r_{t_j},u) \to 0$, as by Corollary \ref{MabConvCor} this would imply that $\o_{r_{t_j}}^n \to \o_{u}^n$ weakly. By the above, this in turn implies that $\o_{u}^n$ is concentrated on $S$, in particular $\int_S \o_u^n = \textup{Vol}(X) >0$. But this is impossible as $\o_{u}^n$ does not charge pluripolar sets, as follows from the definition of $\o_{u}^n$ (see \cite[Theorem 1.3]{gz}).
\end{proof}
Now we prove the equivalence between (i) and (iii) in Theorem \ref{ApplKE}:
\begin{theorem} \label{Dingstab}Suppose that $(X,J)$ does not have non-trivial holomorphic vector fields. Then there exists a K\"ahler-Einstein metric cohomologous to $\o$ if and only if $\mathcal F$ is $d_1$--proper on $\mathcal E^1_{AM}(X,\o)$.
\end{theorem}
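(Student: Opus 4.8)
The plan is to deduce this from Tian's criterion together with the explicit formula \eqref{1-pythagorean_formula}. Since $(X,J)$ carries no non--trivial holomorphic vector fields, the result of Tian quoted above says that a K\"ahler--Einstein metric cohomologous to $\o$ exists if and only if $\mathcal F$ is $\mathcal J$--proper, i.e. $\mathcal J(u_k)\to\infty$ forces $\mathcal F(u_k)\to\infty$; equivalently, every sublevel set $\{\mathcal F\le C\}\subset\mathcal E^1_{AM}(X,\o)$ has bounded $\mathcal J$ (recall $\mathcal J\ge 0$). Hence the theorem follows once I show that on $\mathcal E^1_{AM}(X,\o)$ the functional $\mathcal J$ and the distance $d_1(0,\cdot)$ are comparable up to affine constants, for then a sublevel set of $\mathcal F$ is $\mathcal J$--bounded precisely when it is $d_1$--bounded, and the hypothesis enters only through Tian's theorem.

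One inequality is immediate. Since $AM(u)=0$ on $\mathcal E^1_{AM}(X,\o)$ we have $\mathcal J(u)=\frac{1}{\textup{Vol}(X)}\int_X u\,\o^n\ge 0$, so
\[
\mathcal J(u)\le \frac{1}{\textup{Vol}(X)}\int_X |u|\,\o^n\le I_1(0,u)\le C\,d_1(0,u),
\]
the last step being Theorem \ref{Energy_Metric_Eqv_thm} applied to $\chi=\chi_1$. This already shows that $d_1$--properness implies $\mathcal J$--properness.

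For the reverse comparison I would use \eqref{1-pythagorean_formula}. As $AM(0)=AM(u)=0$, that formula reads $d_1(0,u)=-2AM(P(0,u))$ for $u\in\mathcal E^1_{AM}(X,\o)$, and the task is to bound the envelope energy $-AM(P(0,u))$ by $\sup_X u$. Because $AM(u)\le\sup_X u$ we have $\sup_X u\ge 0$, so $u-\sup_X u$ is an $\o$--plurisubharmonic function that is $\le 0$ and $\le u$; it is therefore a competitor in the envelope defining $P(0,u)$, giving $P(0,u)\ge u-\sup_X u$. Using that $AM$ is monotone and satisfies $AM(\,\cdot+c)=AM(\cdot)+c$, I get $AM(P(0,u))\ge AM(u)-\sup_X u=-\sup_X u$, whence $d_1(0,u)\le 2\sup_X u$. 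The standard bound $\sup_X u\le\int_X u\,\o^n+C'$ (already invoked in the proof of Corollary \ref{supcorollary}) then yields $d_1(0,u)\le C\,\mathcal J(u)+C''$, which shows that $\mathcal J$--properness implies $d_1$--properness and completes the equivalence.

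The genuine content sits in this last step: the upper bound $d_1(0,u)\lesssim\mathcal J(u)$. It rests on the clean identity $d_1(0,u)=-2AM(P(0,u))$ coming from \eqref{1-pythagorean_formula} and on the elementary but crucial observation that $u-\sup_X u$ competes in $P(0,u)$, which is precisely what converts the envelope energy into $\sup_X u$. The remaining ingredients (monotonicity and translation--equivariance of $AM$, and the $\sup$ versus mean inequality) are standard, and the no--holomorphic--vector--fields assumption plays no role in the metric comparison itself.
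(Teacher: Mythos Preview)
Your proposal is correct and follows essentially the same route as the paper: reduce to Tian's criterion, use Theorem \ref{Energy_Metric_Eqv_thm} for $\mathcal J\lesssim d_1$, and use \eqref{1-pythagorean_formula} together with the competitor $u-\sup_X u\le P(0,u)$ (the paper phrases this as $\sup_X(u-P(0,u))\le\sup_X u$ and invokes \eqref{AM_diff}) plus the standard $\sup$--mean inequality for $d_1\lesssim\mathcal J$.
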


\begin{proof} By \cite[Theorem 1.6]{t} it is enough to prove that the concept of $\mathcal J$--properness is equivalent to $d_1$--properness. Suppose $u \in \mathcal E^1_{AM}(X,\o)$. By Theorem \ref{Energy_Metric_Eqv_thm} there exists $C >1$ such that
\begin{equation}\label{proprness1}
\int_X u\o^n \leq C d_1(u,0).
\end{equation}
On the other hand, by \eqref{1-pythagorean_formula} and the fact that $AM(u)=AM(0)=0$ we can write:
$$d_1(u,0)=AM(u) + AM(0) - 2AM(P(0,u))=2(AM(u)- AM(P(0,u))).$$
We have $\sup_X (u - P(0,u)) \leq \sup_X u,$ hence \eqref{AM_diff} implies that $AM(u)- AM(P(0,u)) \leq \sup_X u$. It is well know that there exists $C>0$ depending only on $X$ and $\o$ such that $\sup_X u-C \leq \int_X u\o^n \leq \sup_X u$ for all $u \in \textup{PSH}(X,\o)$. Putting the above together we arrive at:
\begin{equation}\label{proprness2}
d_1(u,0) \leq 2 \int_X u\o^n + 2C.
\end{equation}
By \eqref{proprness1} and \eqref{proprness2} we have
\begin{equation}\label{Jpropd1prop}
\mathcal J(u)/C \leq d_1(u,0) \leq 2 \mathcal J(u) + 2C,
\end{equation}
hence $\mathcal J$--properness is equivalent to $d_1$--properness.
\end{proof}

From \eqref{Jpropd1prop} the following estimate of independent interest follows:
\begin{remark} \label{JensenRemark} There exists $C >1$ such that
$$\frac{1}{C}\sup_X u - C \leq d_1(u,0) \leq C \sup_X u + C, \ u \in \mathcal E^1_{AM}(X,\o).$$
As a consequence of this, there exists $C>1$ such that $\mathcal F(u) \leq C d_1(0,u) +C$ for any $u \in \mathcal E^1_{AM}(X,\o)$, regardless weather $(X,J,\o)$ admits a K\"ahler-Einstein metric or not.
\end{remark}

\vspace{0.1 in}
\textsc{Purdue University and University of Maryland}\\
\emph{E-mail address: }\texttt{\textbf{tdarvas@math.umd.edu}}
\end{document}